\newcommand{\overunder}[2]{
\!\begin{array}{c}
\scriptstyle{#1}\\[-.1in]
-\!\!\!-\!\!\!-\\[-.1in]
\scriptstyle{#2}
\end{array}
\!
}
\def\sgn{\operatorname{sgn}}
\def\TT{\mathbb{T}}
\def\Trop{\operatorname{Trop}}
\def\QQ{\mathbb{Q}}
\def\Q{\mathbb{Q}}
\def\B{\mathcal{B}}
\def\BB{\mathbb{B}}
\def\gg{\mathbf{g}}
\newcommand{\za}{\alpha}
\newcommand{\zb}{\beta}
\newcommand{\zd}{\delta}
\newcommand{\zD}{\Delta}
\newcommand{\ze}{\epsilon}
\newcommand{\zg}{\gamma}
\newcommand{\zl}{\lambda}
\newcommand{\zs}{\sigma}
\DeclareMathOperator{\Bang}{Bang}
\DeclareMathOperator{\Good}{Good}
\DeclareMathOperator{\Brac}{Brac}
\DeclareMathOperator{\cross}{cross}
\newcommand{\taubar}{\overline{\tau}}
\newcommand{\C}{\mathcal{C}}
\newcommand{\A}{\mathcal{A}}
\newcommand{\PP}{\mathbb{P}}
\newcommand{\x}{\mathbf{x}}
\newcommand{\Z}{\mathbb{Z}}
\newcommand{\Y}{\mathsf{y}}
\newcommand{\X}{\mathsf{x}}
\newcommand{\tbar}{\overline{\tau}}
\newcommand{\sbar}{\overline{\sigma}}
\def\Aprin{\Acal_\bullet}
\def\Afull{\Acal_{*}}
\def\Xcal{\mathcal{X}}
\def\Acal{\mathcal{A}}
\def\Fcal{\mathcal{F}}
\def\yy{\mathbf{y}}
\def\xx{\mathbf{x}}
\def\Q{\mathbb{Q}}
\def\ZZ{\mathbb{Z}}
\newtheorem{theorem}{Theorem}[section]
\newtheorem{lemma}[theorem]{Lemma}
\newtheorem{prop}[theorem]{Proposition}
\newtheorem{proposition}[theorem]{Proposition}
\newtheorem{conjecture}[theorem]{Conjecture}
\newtheorem{cor}[theorem]{Corollary}
\newtheorem{corollary}[theorem]{Corollary}
\theoremstyle{definition}
\newtheorem{definition}[theorem]{Definition}
\newtheorem{Def}[theorem]{Definition}
\newtheorem{example}[theorem]{Example}
\theoremstyle{remark}
\newtheorem{remark}[theorem]{Remark}
\numberwithin{equation}{section}
\begin{document}
\title{Bases for cluster algebras from surfaces}
\author{Gregg Musiker}
\address{School of Mathematics, University of Minnesota, Minneapolis, MN 55455}
\email{musiker@math.umn.edu}
\author{Ralf Schiffler} 
\address{Department of Mathematics, University of Connecticut, 
Storrs, CT 06269-3009}
\email{schiffler@math.uconn.edu}
\thanks{{The first author is  partially
   supported by the NSF grant DMS-1067183. The second author is partially
   supported by the NSF grant DMS-1001637.   The third author
is partially supported by the NSF grant DMS-0854432, and 
an NSF CAREER award.}}
\author{Lauren Williams}
\address{Department of Mathematics, University of California,
Berkeley, CA 94720}
\email{williams@math.berkeley.edu}

\subjclass[2010]{13F60, 05C70, 05E15}
\date{}
\dedicatory{}

\keywords{cluster algebra, basis, 
triangulated surfaces}

\begin{abstract}  We construct
two bases for each cluster algebra coming from a triangulated
surface without punctures.
We work in the context of a coefficient system
coming from a full-rank exchange matrix,
for example, {\it principal coefficients}.
\end{abstract}

\maketitle
\maxtocdepth{subsection}
\tableofcontents

\section{Introduction}\label{intro}
Fomin and Zelevinsky introduced cluster
algebras in 2001 \cite{FZ1}, in an attempt to create an algebraic framework
for Lusztig's dual canonical bases and total positivity in semisimple groups
\cite{Lusztig1, Lusztig2, Lusztig3}.  In particular, 
writing down explicitly the elements of the dual canonical basis 
is a very difficult problem; but Fomin and Zelevinsky conjectured
that a large subset of them can be understood via the machinery
of cluster algebras.  More precisely, 
they conjectured that all monomials in the variables
of any given cluster (the \emph{cluster monomials}) belong 
to (the classical limit at $q \to 1$ of) the dual canonical basis \cite{FZ1}.
For recent progress in this direction, see \cite{GLS3,HL2,La1,La2}.

Because of the conjectural connection between cluster algebras and dual canonical bases, it is natural to ask whether one may construct a
``good'' (vector-space) basis $\B$ of each cluster algebra $\A$. In keeping with 
Fomin and Zelevinsky's conjecture, such a basis should include the 
cluster monomials.  Additionally, since the dual canonical basis 
has  striking positivity properties, a good basis of a cluster algebra should
also have analogous positivity properties.  
In particular, if we define $\A^+$ to be the set of elements of $\A$
which expand positively with respect to every cluster, then
one should require that every element $b\in \B$ is also in $\A^+$.
In the case that $b$ is a cluster variable,
this requirement is equivalent to the 
well-known {\it Positivity Conjecture}, one of the main open questions about cluster
algebras.  

The construction of bases for cluster algebras is a problem that has gained a lot of 
attention recently.
Caldero and Keller showed that for cluster algebras of finite type, the cluster monomials 
form a basis \cite{CK}.  For cluster algebras which are not of finite type, the cluster monomials
do not span the cluster algebra, but it follows from \cite{DWZ}, see also \cite{plamondon}, 
that they  are linearly independent as long as the initial exchange matrix  
of the cluster algebra has full rank.  
Sherman and Zelevinsky constructed bases containing the cluster
monomials for the cluster algebra of rank 2 affine types \cite{SZ,Z}, and 
Cerulli-Irelli did so for rank 3 affine types \cite{cerulli}. 
Dupont has used cluster categories to construct the so-called \emph{generic basis} for the affine types  
\cite{Dgeneric, Dup}, see also \cite{DXX}. Geiss, Leclerc and Schr\"oer constructed the generic basis in a much 
more general setting \cite{GLS,GLS2}, which in particular includes all acyclic cluster algebras.
Plamondon \cite[Chapter 5] {Pl2} gives a convenient reparametrization of Geiss-Leclerc-Schr\"oer's basis.

There is an important 
class of cluster algebras associated to \emph{surfaces with marked points}
\cite{FG1, FG2, GSV, FST, FT}.  Such cluster algebras are 
of interest for several reasons.  First, they have a topological 
interpretation: they may be viewed as coordinate rings of the 
corresponding \emph{decorated Teichm\"uller space} \cite{Pen1, Pen2}. Second, 
such cluster algebras  comprise
most of the \emph{mutation-finite} cluster algebras \cite{FeSTu}, that is,
the cluster algebras which have finitely many different exchange matrices.
The (generalized) cluster category of a cluster algebra from a surface has been
defined whenever the surface has a non-empty boundary \cite{BMRRT, A, ABCP, LF, CLF}.
It has been described in geometric terms in \cite{CCS} for the disk, in \cite{S1} for the disk with one puncture, 
and in \cite{BZ} for arbitrary surfaces without punctures. 

Note that the aforementioned constructions do not yield bases 
in the case of cluster algebras from surfaces, in general.

The present paper was inspired by work of Fock and Goncharov \cite{FG1},
and Fomin, Shapiro and Thurston \cite{FST2}.  
In \cite{FG1}, Fock and Goncharov introduced a canonical basis for the cluster 
varieties related to SL$_2$. 
In particular, their
construction gives a basis for the algebra of universally Laurent
polynomials in the dual space, which coincides with the (coefficient-free) \emph{upper} cluster
algebra associated to the surface.  
(Note that in general, the upper cluster algebra contains but is not equal to the cluster algebra.)
Moreover, the elements of their bases have positive Laurent expansions
in all of the clusters that they consider \cite{FG1}.
In a lecture series in 2008 \cite{FST2}, 
D. Thurston announced a construction of two bases 
associated to a cluster algebra from a surface, based on joint work with Fomin and Shapiro,
and inspired by \cite{FG1}; note however that this work was not completed. 

Both of these constructions are parameterized by the same collections 
$\C^\circ$ and $\C$ of 
curves in a surface.  
Recall that an \emph{arc} in a surface with marked points
is (the isotopy class of) a curve connecting two marked points
which has no self-crossings.
A \emph{closed loop}
is a  non-contractible closed curve which is disjoint from the boundary. A closed loop without self-crossings is called \emph{essential}. A multiset of $k$ copies of the same essential loop
is called a \emph{$k$-bangle} and a closed loop obtained by following an essential loop $k$ times, thus creating $k-1$ self-crossings, is called a \emph{$k$-bracelet}. 
Let $\C^\circ$  be the collection of multisets of arcs and essential loops which have no crossings;
and let $\C$ be obtained from $\C^\circ$ by 
replacing the maximal $k$-bangles by the corresponding $k$-bracelets.
In \cite{FG1}, the authors associated a Laurent polynomial to each collection of curves by using 
(the upper right entry or trace of) an appropriate product of elements of $SL_2$.  In \cite{FST2},
the authors associated a cluster algebra element to a collection of curves by using
the (normalized) lambda length of that collection.  These two notions coincide.

In our previous work \cite{MSW}, we gave 
combinatorial formulas for the cluster variables in the cluster algebra 
associated to any surface with marked points,
building on earlier work in \cite{S2,ST,S3,MS}. The formula for the cluster variable associated to 
an arc is a weighted sum over perfect matchings of a planar \emph{snake graph} associated to the arc.
(There are similar formulas for other cluster variables).
Since these formulas are manifestly
positive, the positivity conjecture follows as a corollary.
 
In the present paper, we generalize our formulas from \cite{MSW} to associate a 
Laurent polynomial to each collection of curves in $\C^\circ$ and $\C$ in 
an unpunctured surface $(S,M)$ (i.e. all marked points lie on the boundary). Instead
of using perfect matchings of a planar graph, the 
Laurent polynomial associated to a closed curve is a weighted sum
over \emph{good} matchings in a \emph{band graph} on a Mobius strip or annulus.
We work in the context of a cluster algebra $\A$ associated to $(S,M)$ whose
coefficient system comes from 
a full-rank exchange matrix -- for example, principal coefficients.
In this way we construct bases $\B^{\circ}$ and $\B$ for $\A$ which are 
parameterized by the collections
$\C^\circ$ and $\C$.  
Our bases are manifestly positive, in the sense that both 
$\B^{\circ}$ and $\B$ are contained in $\A^+$.
For surfaces with punctures, we still have a construction of sets $\B^\circ$ and $\B$, 
but not all of the proofs can be adapted to that case.

It is not obvious, but it is possible to show via the results of 
\cite{MW} that the bases we consider in this paper 
coincide with those considered in [FST2], and (in the coefficient-free
case) those
in [FG1].

Our main result is the following.

\begin{theorem}\label{main theorem} Let $\A$ be a cluster algebra with principal coefficients from an
unpunctured  surface, which has at least two marked points. Then
$\B^{\circ}$ and 
$\B$ are both  bases of $\A$.  Moreover,
each element of $\B^{\circ}$ and $\B$ has a positive
Laurent expansion with respect to any cluster of $\A$.
\end{theorem}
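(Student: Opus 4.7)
The plan is to associate to each $\gamma \in \C^{\circ}$ (resp.\ $\gamma \in \C$) a Laurent polynomial $x_\gamma \in \A$ via an explicit combinatorial formula, and then verify linear independence, spanning, and positivity separately. Extending \cite{MSW}, for an arc $\alpha$ we set $x_\alpha$ to be the weighted sum over perfect matchings of the snake graph of $\alpha$; for an essential loop $\ell$ or a bracelet $\Brac_k(\ell)$, we take $x_\ell$ and $x_{\Brac_k(\ell)}$ to be weighted sums over \emph{good matchings} of an associated band graph on an annulus or M\"obius strip; and for a multiset we multiply over connected components. Since each term in these formulas is a monomial in the initial cluster variables with positive coefficient, positivity in the initial cluster is manifest. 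Positivity in every other cluster would then follow by a flip-induction argument reducing the statement to the same matching formulas computed relative to the mutated triangulation, together with the observation that the combinatorial model transforms compatibly with a single diagonal flip.

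To prove linear independence of $\B^{\circ}$ and $\B$, I would extract the $\gg$-vector of each $x_\gamma$ from a distinguished \emph{minimal} matching and show that the map $\gamma \mapsto \gg(x_\gamma)$ is injective on both $\C^{\circ}$ and $\C$ separately. Under the full-rank hypothesis on the exchange matrix (in particular, for principal coefficients), elements of $\A$ with pairwise distinct $\gg$-vectors are automatically linearly independent, as shown in \cite{DWZ,plamondon}, and so the required independence follows. This step is essentially mechanical once the matching formulas and the minimal matching have been set up; the main combinatorial content is identifying which generators of $\ZZ^n$ arise as $\gg$-vectors and checking injectivity by reading off the underlying multicurve from its leading term.

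The main obstacle is the spanning property. The cluster monomials, corresponding to multisets of pairwise non-crossing arcs, already lie in $\B^{\circ} \cap \B$; the task is to produce, for any other element $y \in \A$, an expansion in the $x_\gamma$. The tool I would use is a family of skein-type product identities: $x_{\gamma_1} x_{\gamma_2}$ resolves as a signed sum of $x_\gamma$'s obtained by smoothing each crossing of $\gamma_1$ and $\gamma_2$, and a product of $k$ parallel essential loops can be rewritten as a $\ZZ$-combination of the bracelets $\Brac_j(\ell)$ via a Chebyshev-type identity in the loop variable. Iterating these relations expresses any monomial in the $x_\gamma$'s as a finite combination of elements indexed by $\C^{\circ}$ (respectively $\C$); combined with the injectivity of $\gamma \mapsto \gg(x_\gamma)$, this yields spanning of $\A$ by $\B^{\circ}$ and by $\B$. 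Verifying the skein identities at the level of Laurent polynomials, via an explicit matching correspondence between the band graphs attached to $\gamma_1 \cup \gamma_2$ and those attached to its smoothings, and then arguing that the reduction terminates in a unique $\B^{\circ}$- or $\B$-expansion, is the technical heart of the argument and where the unpunctured hypothesis is used in an essential way.
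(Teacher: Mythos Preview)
Your outline matches the paper's strategy in broad strokes, but there is one genuine gap: you never justify the claim that $x_\gamma \in \A$ when $\gamma$ is an essential loop or bracelet. The band-graph formula produces a Laurent polynomial in the initial cluster variables, but a priori this lies only in the ambient Laurent ring (or at best the upper cluster algebra), not in $\A$ itself. Your spanning argument shows $\A \subset \operatorname{span}_{\ZZ\PP}(\B^\circ)$, and independence shows $\B^\circ$ is a basis of its own span; but equality with $\A$ requires the reverse inclusion $\B^\circ \subset \A$. The paper handles this in Proposition~\ref{lem:containment}: for each essential loop $\zeta$ one constructs two arcs $\gamma,\gamma'$ whose product, resolved via skein relations, expresses $x_\zeta$ as a $\ZZ\PP$-combination of cluster variables. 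Constructing such a pair requires at least two marked points, and this is exactly where that hypothesis is used---you do not mention it at all.

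Two smaller points. First, your appeal to \cite{DWZ,plamondon} for ``distinct $\gg$-vectors $\Rightarrow$ linear independence'' is imprecise: those references treat cluster monomials. For loop and bracelet elements you must first show each $x_\gamma$ is homogeneous in the $\gg$-grading with a unique leading term coming from the minimal matching; the paper does this by proving the good matchings of a band graph form a distributive lattice (Theorems~\ref{g-loops} and~\ref{BandTheorem}), which is more than ``essentially mechanical.'' Second, your flip-induction route to positivity in every cluster requires knowing that the band-graph element $X_\zeta^T$ is independent of the triangulation $T$; the paper obtains this instead from the matrix formulas of \cite{MW} together with the separation formula \cite[Theorem~3.7]{FZ4}, rather than by a direct flip argument.
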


\begin{corollary}\label{cor:coefficients}
Let $\Afull$ be a cluster algebra from an unpunctured surface with at least two marked points,
whose  coefficient system  comes from a full-rank exchange matrix.  Then 
there are bases 
$\BB^{\circ}$ and  $\BB$ for $\Afull$, whose elements have positive Laurent expansions
with respect to every cluster of $\Afull$.
\end{corollary}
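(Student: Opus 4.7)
The plan is to deduce Corollary~\ref{cor:coefficients} from Theorem~\ref{main theorem} by means of the separation of additions formula of Fomin and Zelevinsky, with the full-rank hypothesis on the extended exchange matrix doing the essential work.

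Fix an initial seed $((\mathbf{x},\mathbf{x}_{\mathrm{fr}}),\tilde B)$ of $\Afull$, where $\mathbf{x}=(x_1,\dots,x_n)$ is the mutable cluster, $\mathbf{x}_{\mathrm{fr}}=(x_{n+1},\dots,x_m)$ are the frozen variables, and $\tilde B$ has full rank $n$. Let $\A$ denote the principal-coefficients cluster algebra sharing the same mutable $B$-part at this seed, so by Theorem~\ref{main theorem} the sets $\B^\circ$ and $\B$ are bases of $\A$ consisting of explicit positive Laurent polynomials in $(\mathbf{x},\mathbf{y})$ built from weighted matchings of snake and band graphs. Set $y_j^{\Afull}:=\prod_{k>n} x_k^{b_{kj}}$ for the coefficient monomials of $\Afull$. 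I would define $\BB^\circ$ and $\BB$ as the sets obtained from $\B^\circ$ and $\B$ by the substitution $y_j\mapsto y_j^{\Afull}$ in these Laurent polynomial expressions. Positivity in every cluster of $\Afull$ is then inherited from the corresponding positivity in $\A$, since this substitution replaces each $y_j$ by a Laurent monomial in the frozen variables and commutes with mutation at any vertex.

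It remains to show that $\BB^\circ$ and $\BB$ are bases. For \emph{linear independence}, the full-rank hypothesis says that the coefficient rows of $\tilde B$ induce an injective map $\mathbb{Z}^n\to\mathbb{Z}^{m-n}$, so distinct $(\mathbf{x},\mathbf{y})$-monomials in the initial-seed Laurent expansion of elements of $\B$ remain distinct after the substitution $y_j\mapsto y_j^{\Afull}$. Consequently any relation $\sum_i\lambda_i\tilde b_i=0$ in $\Afull$ pulls back, term by term in the initial cluster expansion, to $\sum_i\lambda_i b_i=0$ in $\A$, which forces all $\lambda_i=0$ by Theorem~\ref{main theorem}.

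The main obstacle I anticipate is the \emph{spanning} assertion, because the substitution map from $\A$ to $\Afull$ need not be surjective on the nose. The idea is to invert the frozen variables and realize $\Afull[\mathbf{x}_{\mathrm{fr}}^{-1}]$, after a suitable localization, as the image of a corresponding localization of $\A$ under the substitution; the full-rank hypothesis is precisely what makes this work, since it guarantees that the $y_j^{\Afull}$ generate a free abelian sub-lattice of the frozen-variable Laurent lattice of rank $n$. Every element of $\Afull$ is then a Laurent polynomial in $(\mathbf{x},\mathbf{x}_{\mathrm{fr}})$ that arises as the substitution of an element of the localized $\A$, which can be expanded in $\B$; moreover the frozen variables are themselves cluster monomials of $\Afull$ and so lie directly in $\BB$. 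Combining these ingredients transfers spanning from $\B$ to $\BB$, and analogously from $\B^\circ$ to $\BB^\circ$, completing the proof.
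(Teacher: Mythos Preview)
Your overall strategy---specialize the principal-coefficient basis via the separation formula and then check the three properties---is exactly the paper's approach in Section~\ref{full-rank}. But two of your three steps have genuine gaps.

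\textbf{Linear independence.} Your key claim, that ``the coefficient rows of $\tilde B$ induce an injective map $\mathbb{Z}^n\to\mathbb{Z}^{m-n}$,'' is not what full rank means: full rank says the \emph{columns} of the whole $m\times n$ matrix are independent, not that the bottom $(m-n)\times n$ block is injective. (Take $n$ even, $B_T$ invertible, and a single zero frozen row.) Even granting injectivity of the substitution on monomials, your ``pull back a relation term by term'' step fails because a linear relation in $\Afull$ has coefficients in $\mathbb{Z}[x_{n+1}^{\pm1},\dots,x_m^{\pm1}]$, and there is no reason these should lift to coefficients in $\mathbb{Z}[y_1,\dots,y_n]$. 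The paper avoids both problems by invoking Proposition~\ref{suffice} directly: each $\tilde b\in\BB$ has Laurent expansion of the form $\mathbf{x}^{g}+\sum_h \lambda_h\,\mathbf{x}^{g+h}$ with each $h$ a nonnegative integer combination of the columns of $\tilde B$, and the first $n$ coordinates of the $g$'s are pairwise distinct by Theorem~\ref{th:bijection}. Full rank of $\tilde B$ then makes ``$\le$ by adding columns'' a genuine partial order on $\mathbb{Z}^m$, so leading terms cannot cancel even after multiplying by arbitrary frozen monomials.

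\textbf{Spanning.} You flag this as the main obstacle and sketch a localization argument, but no localization is needed: in a geometric-type cluster algebra the frozen variables are already invertible in the ground ring $\mathbb{Z}\mathbb{P}$. The paper simply observes that the specialization $\phi:\A\to\Afull$ sending $y_j\mapsto \Y_j$ is an algebra homomorphism; by the separation formula each cluster variable of $\Afull$ is a frozen monomial times $\phi$ of the corresponding cluster variable of $\A$, so every skein relation in $\A$ maps to one in $\Afull$. Spanning of $\BB^\circ$ then follows immediately from spanning of $\B^\circ$, and the Chebyshev relation transfers spanning to $\BB$.

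A minor point: your definition of $\BB$ omits the normalization by $\Trop(F^T(\Y_1,\dots,\Y_n))$ that the paper uses (Definition~\ref{separation2}); the two choices differ only by units in the ground ring, so this does not affect basis or positivity statements, but the paper's normalization is what makes the loop elements match the genuine cluster variables when $\zeta$ happens to be an arc.
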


We are grateful to Goncharov \cite{G} for pointing out that
using the results in \cite{FG1} together with Theorem \ref{main theorem}, one may  deduce
Corollary \ref{cor:upper} (a).

\begin{corollary} \label{cor:upper}
Let $\A$ be a coefficient-free cluster algebra from an unpunctured surface
with at least two marked points.  
\begin{itemize}
\item[\textup{(a)}]  The upper cluster algebra coincides with the cluster algebra.
\item[\textup{(b)}] $\B^{\circ}$ and 
$\B$ are both  bases of $\A$.
\end{itemize}
\end{corollary}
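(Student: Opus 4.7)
My plan is to deduce both parts from Theorem \ref{main theorem} combined with the Fock--Goncharov construction of a basis for the relevant upper cluster algebra.

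The first step is to pass from principal coefficients to trivial coefficients by specialization. Let $\A_{\mathrm{prin}}$ denote the principal-coefficient cluster algebra of the same unpunctured surface, and let $\phi\colon \A_{\mathrm{prin}}\to \A$ be the surjective ring homomorphism sending each coefficient $y_i$ to $1$. The snake- and band-graph matching formulas defining $\B^\circ_{\mathrm{prin}}$ and $\B_{\mathrm{prin}}$ specialize, under $y_i\mapsto 1$, to the coefficient-free formulas defining $\B^\circ$ and $\B$, so $\phi(\B^\circ_{\mathrm{prin}})=\B^\circ$ and $\phi(\B_{\mathrm{prin}})=\B$. Applying $\phi$ to the spanning statement of Theorem \ref{main theorem} gives that $\B^\circ$ and $\B$ span $\A$; positivity in $\A$ is inherited from positivity in $\A_{\mathrm{prin}}$ in the same way.

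To establish (a), I invoke \cite{FG1}, which in the coefficient-free, unpunctured setting produces a basis of the upper cluster algebra $\mathcal{U}(\A)$ parametrized by $\C^\circ$. The identification noted in the introduction (via the results of \cite{MW}) shows that this Fock--Goncharov basis coincides with our set $\B^\circ$. Combining the inclusions $\B^\circ\subseteq \A\subseteq \mathcal{U}(\A)$ with the fact that $\B^\circ$ already spans $\mathcal{U}(\A)$ forces $\A=\mathcal{U}(\A)$; moreover $\B^\circ$ is a basis of $\A$. For $\B$, the classical Chebyshev-type identity expressing a $k$-bracelet as the corresponding $k$-bangle plus a $\Z$-linear combination of multicurves with strictly fewer bangle copies yields a unitriangular change of coordinates between $\B$ and $\B^\circ$ with respect to total bangle count, so $\B$ is also a basis of $\A$.

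The main obstacle is the identification of $\B^\circ$ with the Fock--Goncharov basis via \cite{MW}; without that cross-reference one cannot leverage \cite{FG1}'s theorem about $\mathcal{U}(\A)$, and one would have to re-prove linear independence directly in the coefficient-free case, where the exchange matrix need not have full rank and the independence arguments available for $\A_{\mathrm{prin}}$ do not automatically descend under $\phi$. Once the identification is in hand, the corollary reduces to the spanning/containment sandwich described above.
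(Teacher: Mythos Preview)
Your argument is essentially the same as the paper's (Corollary~4.9): use the Fock--Goncharov basis of the upper cluster algebra together with the identification from \cite{MW}, then sandwich to get $\A=\mathcal{U}(\A)$, and pass between $\B$ and $\B^\circ$ via the Chebyshev relation.

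One correction: the Fock--Goncharov basis is the \emph{bracelet} basis $\B$, not $\B^\circ$; the paper cites \cite[Theorem 4.11, Proposition 4.12]{MW} for precisely this identification. So the sandwich argument runs with $\B$ in place of $\B^\circ$, giving first that $\B$ is a basis of $\A=\mathcal{U}(\A)$, and then $\B^\circ$ follows from the unitriangular Chebyshev change of basis (the reverse direction of your last step). Your version with $\B^\circ$ would still work logically, since the unitriangular relation goes both ways, but the cited identification in \cite{MW} is for $\B$.
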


Besides the property that $\B^{\circ}$ and $\B$ lie in $\A^+$, one might ask 
whether the structure constants for these bases are positive. In other words,
is it the case that every product of basis elements, when expanded as a linear combination
of basis elements, has all coefficients positive?

\begin{conjecture} \cite[Section 12]{FG1}, \cite{FST2} Both bases $\B^{\circ}$ and $\B$ have positive structure constants.
\end{conjecture}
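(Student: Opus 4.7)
The plan is to reduce the conjecture to a skein-type calculus for curves on $(S,M)$, in the spirit of \cite{FG1} and \cite{FST2}. The first step is to establish, for any two curves $\gamma_1,\gamma_2$ in $(S,M)$ meeting transversally at a single point $p$, a local skein identity in $\A$ of the form
$$x_{\gamma_1}\,x_{\gamma_2} \;=\; y^{a}\,x_{\delta_1} \;+\; y^{b}\,x_{\delta_2},$$
where $\delta_1,\delta_2$ denote the two smoothings of $\gamma_1\cup\gamma_2$ at $p$ and $y^{a},y^{b}$ are monomials in the coefficient variables with nonnegative exponents. In the coefficient-free case this is essentially the $SL_2$ trace identity $\operatorname{tr}(A)\operatorname{tr}(B)=\operatorname{tr}(AB)+\operatorname{tr}(AB^{-1})$ underlying \cite{FG1}; in the principal-coefficient setting, the identity should be extracted directly from the snake-graph and band-graph expansions developed in the present paper, by analyzing locally how a good matching on the union of two graphs decomposes at the crossing.

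Granted such a local rule, I would compute the product $b_{1}\cdot b_{2}$ of any two elements of $\B$ (respectively $\B^{\circ}$) by iteratively smoothing the transverse intersections among the underlying multiset of curves $b_{1}\cup b_{2}$. The total geometric intersection number is strictly decreased by each smoothing, and at each step the new coefficients that appear are positive monomials in the coefficient variables. The induction terminates at a nonnegative combination of products of pairwise non-crossing curves, which can be translated back into $\B$ via the bangle-to-bracelet rule used to define $\B$ from $\B^{\circ}$, together with the Chebyshev identity $T_{k}T_{\ell}=T_{k+\ell}+T_{|k-\ell|}$ to rewrite products of $k$- and $\ell$-bracelets on the same essential loop as a positive sum of bracelets of new orders.

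The main obstacle is the bracelet case. A single $k$-bracelet already carries $k-1$ self-crossings, so multiplying two bracelets on the same essential loop forces one to resolve $\Theta(k\ell)$ intersections whose cumulative contribution must collapse to precisely the Chebyshev combination, with no residual terms and no cancellations that could introduce negative coefficients. A secondary difficulty is the treatment of degenerate resolutions: when a smoothing produces a contractible loop, a curve isotopic into the boundary, or a repeated arc, one must assign it the correct element of $\A$ (typically a boundary-segment monomial, or the value $2$ coming from $\operatorname{tr}(I)$ in the $SL_{2}$ picture) in a way that is compatible with positivity. Finally, one needs to verify that the coefficient-variable prefactors $y^{a}, y^{b}$ obtained from the snake-graph analysis really carry nonnegative exponents, rather than Laurent monomials, which is where the full-rank / principal-coefficient hypothesis should be essential. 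Once these local identities are pinned down in the coefficient setting, the conjecture should follow by induction on the total intersection number of the curves in $b_{1}\cup b_{2}$.
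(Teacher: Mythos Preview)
The statement you are attempting to prove is recorded in the paper as a \emph{conjecture}; the paper does not supply a proof, and indeed only notes the partial result of Cerulli-Irelli and Labardini for the span of cluster monomials. So there is no ``paper's proof'' to compare your proposal against.

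That said, your outline has a genuine gap that explains why the statement remains open. You write the local skein identity with two \emph{positive} terms $y^{a}x_{\delta_1}+y^{b}x_{\delta_2}$ and assert that ``at each step the new coefficients that appear are positive monomials.'' But the skein relations proved in the paper (Theorem~\ref{th:skein1}) take the form
\[
x_C \;=\; \pm\,Y_1\,x_{C_+}\;\pm\;Y_2\,x_{C_-},
\]
with signs that are not always $+$. More concretely, the paper's conventions (Definitions~\ref{def:matching} and~\ref{def closed loop}) assign the value $-2$ to a contractible loop and a factor of $-1$ to each contractible kink; these are exactly the degenerate resolutions you flag, but with the opposite sign from the ``value $2$ from $\operatorname{tr}(I)$'' that you suggest. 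For instance, in the proof of Proposition~\ref{prop Cheby} one sees that smoothing a single self-crossing of $\Brac_{k+1}\zeta$ produces $\{\zeta,\Brac_k\zeta\}$ together with $\Brac_{k-1}\zeta$ carrying a kink, and the resulting identity is $x_{\Brac_{k+1}\zeta}=x_\zeta x_{\Brac_k\zeta}-Y_\zeta\,x_{\Brac_{k-1}\zeta}$ with a genuine minus sign. Thus the inductive smoothing procedure you describe does \emph{not} remain positive term-by-term; the entire content of the conjecture is that the negative contributions produced along the way ultimately cancel. Your Chebyshev step $T_kT_\ell=T_{k+\ell}+Y^{\min(k,\ell)}T_{|k-\ell|}$ is correct and positive, but it handles only the final bookkeeping for bracelets on a single essential loop, not the sign cancellations arising from intermediate kinks and contractible components during the resolution process.
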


As a partial result in this direction, Cerulli-Irelli and Labardini \cite{CLF}  showed that for a surface with non-empty
boundary, the elements of $\mathcal{A}^+$ that lie in the span of the set of cluster monomials have
positive structure constants.

Finally, one might ask whether either of these bases is \emph{atomic}.  
We say that $\B$ is an atomic basis for $\A$ if 
$a\in \A^+$ if and only if when we write $a = \sum_{b\in \B} \lambda_b\ b,$
every coefficient $\lambda_b$ is non-negative.  Sherman and Zelevinsky showed that the bases they constructed were atomic. They also showed that  if an atomic
basis exists, it is necessarily unique \cite{SZ}.

In the case of finite type cluster algebras, 
Cerulli-Irelli \cite{cerulli2}  showed that 
the basis of cluster monomials is in fact atomic.
Recently, Dupont and Thomas proved in \cite{DuTh} that the basis constructed by Dupont in \cite{D} for the affine $\tilde{\mathbb{A}}$ types is an atomic basis. This basis coincides with our basis $\B$ in the case where the surface is an annulus, and all coefficients are set to 1.  Their proof
uses the surface model, and 
we expect that it can be generalized to arbitrary unpunctured surfaces.  

\begin{conjecture} The basis $\B$ is an atomic basis.
\end{conjecture}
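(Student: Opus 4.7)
The plan is to adapt the Dupont--Thomas strategy from the $\tilde{\mathbb{A}}$ case, which the authors explicitly flag as the model. The high-level idea is to produce, for each $b\in \B$ (indexed by a multicurve $c\in\C$), a distinguished Laurent monomial $\mu_b$ in some fixed cluster such that (i) $\mu_b$ appears with strictly positive coefficient in the expansion of $b$, and (ii) $\mu_b$ does not appear in the Laurent expansion of any other $b'\in \B$. Given such a family, atomicity follows immediately from Theorem \ref{main theorem}: if $a\in\A^+$ has basis expansion $a=\sum_{b\in\B}\lambda_b\,b$, then the coefficient of $\mu_b$ in the Laurent expansion of $a$ in that cluster equals $\lambda_b$ times a positive number, so the positivity hypothesis on $a$ forces $\lambda_b\ge 0$. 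Combined with Theorem \ref{main theorem}, which gives the reverse implication, this proves atomicity.

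The first step is to choose, for each $c\in \C$, a reference triangulation $T_c$ and read off the candidate $\mu_b$ from the formulas developed in this paper. When $c$ consists only of arcs, I would take $T_c$ to be a triangulation containing all arcs of $c$; then $b$ is a cluster monomial in $T_c$, and $\mu_b$ is the corresponding monomial in the initial variables. For a collection containing $k$-bracelets, I would pick $T_c$ so that each underlying essential loop crosses $T_c$ minimally, and then single out the monomial arising from a distinguished ``extremal'' good matching on the band graph as $\mu_b$. Because we are working with principal coefficients, $g$-vectors are well defined, and they are the natural invariant to compare the $\mu_b$ across different $b$.

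The second step is to verify condition (ii), that the chosen monomials really are pairwise distinct and do not appear in any other basis element. For cluster monomials this reduces to the known fact (from \cite{DWZ} and the work cited in the introduction) that distinct cluster monomials have distinct $g$-vectors. What remains is the interaction between bracelet elements, and between bracelets and cluster monomials. The key additional input required is a monotonicity statement along the lines of Dupont--Thomas: passing from a $k'$-bracelet to a $k$-bracelet on the same underlying essential loop with $k>k'$ should strictly shift the extremal $g$-vector in a controlled direction, and crossing behavior with compatible arcs should be similarly rigid.

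The hard part will be exactly this bracelet-versus-bracelet comparison. A $k$-bracelet is a single closed curve with $k-1$ self-crossings rather than a disjoint union, so its band graph does not factor as the $k$-fold product of the essential loop's band graph, and good matchings on it can in principle produce Laurent monomials that also occur in matchings of band graphs for genuinely different elements of $\C$. Ruling out this interference, i.e., exhibiting for each bracelet-containing $c$ a matching whose weight is not hit by matchings coming from any other $c'\in\C$, will require a careful combinatorial analysis of good matchings on the band graphs introduced in this paper and a proof that the associated $g$-vectors stratify $\B$ into a partial order with $b$ as the unique minimum above $\mu_b$. This is the technical core of the argument, and the step most likely to need ideas beyond \cite{DuTh}, since the annulus band graphs there are considerably simpler than the Möbius band graphs that arise for general unpunctured surfaces.
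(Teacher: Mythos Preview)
The statement you are attempting to prove is stated in the paper as a \emph{conjecture}, not as a theorem: the paper does not prove it and offers no proof to compare against. The authors explicitly say only that Dupont--Thomas handle the annulus case (coefficient-free) and that they ``expect'' the argument generalizes. So there is nothing in the paper to match your argument to; what you have written is a research outline, not a proof, and you yourself acknowledge that the technical core is left open.

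A few concrete comments on the outline. First, the reduction step you sketch (find, for each $b$, a cluster and a monomial $\mu_b$ appearing in $b$ but in no other $b'$) is logically sound for showing $\lambda_{b_0}\ge 0$ one $b_0$ at a time, provided the basis is over $\ZZ$ (the coefficient-free case, which is the setting of Sherman--Zelevinsky and Dupont--Thomas). If you intend this for principal coefficients, be aware that $\lambda_b\in\ZZ\PP$ and your monomial-extraction argument then only controls the constant-in-$y$ part of $\lambda_b$; you would need to say what ``non-negative'' means in $\ZZ\PP$ and adapt accordingly. Second, your appeal to $\gg$-vectors to separate the $\mu_b$ is already fully available from the paper (Theorem~\ref{th:bijection} and Theorem~\ref{g-loops}): in a fixed cluster every $b\in\B$ has a unique $y$-free Laurent monomial $x^{\gg(b)}$, and these are pairwise distinct. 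This is precisely what gives linear independence, but it does \emph{not} by itself yield atomicity in the coefficient-free setting, because after specializing $y_i=1$ the non-leading monomials of one $b'$ can and do collide with the leading monomial of another $b$. So the ``bracelet-versus-bracelet'' obstruction you flag is not the only one: even cluster monomials against bracelets require work once the $y$-grading is collapsed. Third, varying the reference triangulation $T_c$ with $c$ is fine logically, but then your condition~(ii) must be checked against \emph{every} other $b'\in\B$ expanded in $T_c$, which is an infinite family; you have not indicated any finiteness or compactness mechanism to reduce this. In short: the strategy is reasonable and in the spirit of \cite{DuTh}, but what you have written remains a plan with its decisive step unproved, consistent with the paper's own assessment that this is an open conjecture.
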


To prove Theorem \ref{main theorem} we need to show that 
$\B^{\circ}$ and $\B$ are contained in $\A$, that they form a spanning set, and that they
are linearly independent. The positivity property follows by construction
(elements are defined as sums over perfect matchings of certain graphs), together 
with \cite[Theorem 3.7]{FZ4}.
We show that  
both  $\B^\circ$ and $\B$ are spanning sets using skein relations 
with principal coefficients \cite{MW}.  In order to show
linear independence, we need to extend the notion of $\gg$-vector, defined in \cite{FZ4}, to 
$\B^{\circ}$ and $\B$.  Along the way we prove that the set of monomials in the Laurent expansions
of elements of $\B^{\circ}$ and $\B$ have the structure of a distributive lattice.
The following result, which may be interesting in its own right, then implies linear independence
of both $\B^{\circ}$ and $\B$.

\begin{theorem}\label{th:bijection}  Let $\A$ be a cluster algebra with principal coefficients from an
unpunctured  surface, which has at least two marked points. Then
the $\gg$-vector   induces bijections $\B^\circ \to \mathbb{Z}^n$ and $\B \to \mathbb{Z}^n$.
\end{theorem}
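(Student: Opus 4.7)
The plan is to extend the definition of the $\gg$-vector from cluster monomials to all elements of $\B^\circ$ and $\B$, and then show that the resulting map to $\Z^n$ is a bijection. Fix the reference triangulation $T = \{\tau_1,\dots,\tau_n\}$ whose arcs index the initial cluster. The Laurent expansion of any $b \in \B^\circ$ or $\B$ is a weighted sum over (good) matchings of a snake or band graph, and the preceding development of the paper equips the set of such matchings with a distributive lattice structure, in particular with a unique minimum. I would define $\gg(b)$ to be the exponent vector (in the initial cluster $x$-variables, with the $y$-coefficient factors stripped off) of the monomial contributed by this minimum. For a cluster variable this agrees with the Fomin--Zelevinsky $\gg$-vector \cite{FZ4}; in particular each $\tau_i \in T$ has $\gg$-vector $e_i$. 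Since the minimum of the graph obtained by juxtaposing two snake or band graphs with no crossing is the product of the two minima, $\gg$ is additive over the non-crossing components of any element of $\B^\circ$, and for $\B$ one treats each bracelet as a single atomic piece and again has additivity.

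For injectivity I would show that $\gg(b)$ records enough combinatorial data to recover $b$. The key step is to extract from the minimal-matching formula an intersection-theoretic interpretation of each coordinate $\gg(b)_i$: signed local contributions from the triangles of $T$ crossed by the curves in $b$, plus an additional contribution from any loop, bangle, or bracelet threading the quadrilateral dual to $\tau_i$. Phrased this way, $\gg(b)$ plays the role of shear coordinates for the integral lamination represented by $b$, and distinct integral multicurves on an unpunctured $(S,M)$ have distinct shear-coordinate vectors with respect to any fixed triangulation. This forces injectivity for $\B^\circ$, and injectivity for $\B$ follows after the maximal $k$-bangle versus $k$-bracelet substitution.

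For surjectivity I would construct the inverse by running the shear-coordinate correspondence in reverse: every $v \in \Z^n$ arises as the coordinate vector of a unique integral lamination of $(S,M)$, and such a lamination is naturally represented by a non-crossing multiset of arcs and essential loops, giving an element of $\B^\circ$; replacing maximal $k$-bangles by $k$-bracelets gives the corresponding element of $\B$. What needs to be verified is that the combinatorial $\gg$-vector coming from the minimal matching agrees with the standard shear coordinates, and by additivity this reduces to a case-by-case check on individual arcs, essential loops, and bracelets against their snake or band graphs.

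The step I expect to be the main obstacle is the bracelet case. A $k$-bracelet carries $k-1$ self-crossings, so the minimum of the lattice of good matchings of the associated band graph is more delicate than in the arc or single-loop case, and verifying $\gg(\Brac_k(\ell)) = k \cdot \gg(\ell)$ (so that different bracelet multiplicities of the same essential loop are separated and additivity across components holds) should require an induction on $k$ using a Chebyshev-/skein-type recursion among bracelets, together with the observation that the error terms in the recursion lie strictly above the minimal monomial in the distributive lattice.
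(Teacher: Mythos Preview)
Your definition of the $\gg$-vector via the minimal matching, its additivity over non-crossing pieces, and the bracelet computation $\gg(\Brac_k\zeta)=k\,\gg(\zeta)$ via Chebyshev recursion are all correct and match what the paper does (the last is exactly Proposition~\ref{same-g}). The divergence is in how you propose to prove bijectivity.

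Your plan hinges on interpreting $\gg(b)$ as the shear-coordinate vector of ``the integral lamination represented by $b$'' and then invoking Thurston's bijection between integral laminations and $\Z^n$. There are two genuine gaps here. First, elements of $\C^\circ(S,M)$ are multisets of \emph{arcs} (with endpoints at marked points) together with essential loops, whereas laminations consist of curves with endpoints at \emph{unmarked} boundary points together with closed curves; you have not specified the passage from one to the other. The natural candidate, replacing each arc $\gamma$ by its elementary lamination $L_\gamma$, is not obviously a bijection onto all integral laminations: a lamination curve whose endpoints sit on the ``wrong'' side of the nearest marked points is not $L_\gamma$ for any arc $\gamma$, so your surjectivity argument does not produce an element of $\B^\circ$ without further work. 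Second, even for a single arc the identity ``$\gg(x_\gamma)=$ shear coordinates of $L_\gamma$'' is a nontrivial theorem, not a local case-check; establishing it from the minimal-matching formula amounts to roughly the same amount of work as the paper's direct argument.

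The paper proceeds entirely differently and avoids shear coordinates. It first constructs, for each $i$, an explicit arc $\bar\tau_i$ with $\gg(x_{\bar\tau_i})=-e_i$ (Proposition~\ref{invert}). It then defines a candidate inverse $f:\Z^n\to\B^\circ$ by writing $v=\sum r_ie_i-\sum s_je_j$, forming $\prod x_{\tau_i}^{r_i}\prod x_{\bar\tau_j}^{s_j}$, resolving all crossings by skein relations, and taking the unique leading term (Lemma~\ref{skein-g} guarantees there is one). That $\gg\circ f=\mathrm{id}$ is then immediate. The substantive step is surjectivity of $f$: the paper introduces maximal $(T,\gamma)$-fans and shows (Lemmas~\ref{lem g1} and~\ref{lem g2}) that every arc and every essential loop arises as the leading term of the resolution of an explicit multicurve built from the $\tau_i$ and $\bar\tau_i$. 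This fan analysis replaces your proposed shear-coordinate identification and is where the real content lies.
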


The paper is organized as follows. After recalling some background on cluster algebras in 
Section \ref{sect2}, we define the bases $\B^\circ$ and $\B$ in 
Section \ref{sec two bases}. Sections \ref{sec:main}-\ref{sec:g}
are devoted to the proof of our main result, in the context of principal coefficients.  
Corollary \ref{cor:upper} is proven at the end of Section \ref{sect span}.
In Section \ref{full-rank}, we explain how to construct
bases for cluster algebras from surfaces in which 
the coefficient system comes from a 
full-rank exchange matrix.
Finally, in Section \ref{appendix}, we briefly sketch how to extend 
our result to surfaces 
with punctures, and explain which part of the proof does not generalize easily.

\textsc{Acknowledgements:} We thank  Gr\'egoire Dupont, Sergey Fomin, Sasha Goncharov, Bernard Leclerc, Hugh Thomas,
Dylan Thurston and Andrei Zelevinsky for interesting discussions.  We are particularly grateful to 
Dylan Thurston for his inspiring lectures in Morelia, Mexico.

\section{Preliminaries and notation}\label{sect2}
In this section, we review some notions from the theory of cluster algebras.
\subsection{Cluster algebras}\label{sect cluster algebras}
We begin by reviewing the definition of cluster algebra,
first introduced by Fomin and Zelevinsky in \cite{FZ1}.
Our definition follows the exposition in \cite{FZ4}.
Another good reference for cluster algebras is \cite{GSV-book}.

To define  a cluster algebra~$\Acal$ we must first fix its
ground ring.
Let $(\PP,\oplus, \cdot)$ be a \emph{semifield}, i.e.,
an abelian multiplicative group endowed with a binary operation of
\emph{(auxiliary) addition}~$\oplus$ which is commutative, associative, and
distributive with respect to the multiplication in~$\PP$.
The group ring~$\ZZ\PP$ will be
used as a \emph{ground ring} for~$\Acal$.
One important choice for $\PP$ is the tropical semifield; in this case we say that the
corresponding cluster algebra is of {\it geometric type}.
Let $\Trop (u_1, \dots, u_{m})$ be an abelian group (written
multiplicatively) freely generated by the $u_j$.
We define  $\oplus$ in $\Trop (u_1,\dots, u_{m})$ by
\begin{equation}
\label{eq:tropical-addition}
\prod_j u_j^{a_j} \oplus \prod_j u_j^{b_j} =
\prod_j u_j^{\min (a_j, b_j)} \,,
\end{equation}
and call $(\Trop (u_1,\dots,u_{m}),\oplus,\cdot)$ a \emph{tropical
 semifield}.
Note that the group ring of $\Trop (u_1,\dots,u_{m})$ is the ring of Laurent
polynomials in the variables~$u_j\,$.

As an \emph{ambient field} for
$\Acal$, we take a field $\Fcal$
isomorphic to the field of rational functions in $n$ independent
variables (here $n$ is the \emph{rank} of~$\Acal$),
with coefficients in~$\QQ \PP$.
Note that the definition of $\Fcal$ does not involve
the auxiliary addition
in~$\PP$.

\begin{Def}
\label{def:seed}
A \emph{labeled seed} in~$\Fcal$ is
a triple $(\xx, \yy, B)$, where
\begin{itemize}
\item
$\xx = (x_1, \dots, x_n)$ is an $n$-tuple 
from $\Fcal$
forming a \emph{free generating set} over $\QQ \PP$,
\item
$\yy = (y_1, \dots, y_n)$ is an $n$-tuple
from $\PP$, and
\item
$B = (b_{ij})$ is an $n\!\times\! n$ integer matrix
which is \emph{skew-symmetrizable}.
\end{itemize}
That is, $x_1, \dots, x_n$
are algebraically independent over~$\QQ \PP$, and
$\Fcal = \QQ \PP(x_1, \dots, x_n)$.
We refer to~$\xx$ as the (labeled)
\emph{cluster} of a labeled seed $(\xx, \yy, B)$,
to the tuple~$\yy$ as the \emph{coefficient tuple}, and to the
matrix~$B$ as the \emph{exchange matrix}.
\end{Def}

We obtain ({\it unlabeled}) {\it seeds} from labeled seeds
by identifying labeled seeds that differ from
each other by simultaneous permutations of
the components in $\xx$ and~$\yy$, and of the rows and columns of~$B$.

We  use the notation
$[x]_+ = \max(x,0)$,
$[1,n]=\{1, \dots, n\}$, and
\begin{align*}
\sgn(x) &=
\begin{cases}
-1 & \text{if $x<0$;}\\
0  & \text{if $x=0$;}\\
 1 & \text{if $x>0$.}
\end{cases}
\end{align*}

\begin{Def}
\label{def:seed-mutation}
Let $(\xx, \yy, B)$ be a labeled seed in $\Fcal$,
and let $k \in [1,n]$.
The \emph{seed mutation} $\mu_k$ in direction~$k$ transforms
$(\xx, \yy, B)$ into the labeled seed
$\mu_k(\xx, \yy, B)=(\xx', \yy', B')$ defined as follows:
\begin{itemize}
\item
The entries of $B'=(b'_{ij})$ are given by
\begin{equation}
\label{eq:matrix-mutation}
b'_{ij} =
\begin{cases}
-b_{ij} & \text{if $i=k$ or $j=k$;} \\[.05in]
b_{ij} + \sgn(b_{ik}) \ [b_{ik}b_{kj}]_+
 & \text{otherwise.}
\end{cases}
\end{equation}
\item
The coefficient tuple $\yy'=(y_1',\dots,y_n')$ is given by
\begin{equation}
\label{eq:y-mutation}
y'_j =
\begin{cases}
y_k^{-1} & \text{if $j = k$};\\[.05in]
y_j y_k^{[b_{kj}]_+}
(y_k \oplus 1)^{- b_{kj}} & \text{if $j \neq k$}.
\end{cases}
\end{equation}
\item
The cluster $\xx'=(x_1',\dots,x_n')$ is given by
$x_j'=x_j$ for $j\neq k$,
whereas $x'_k \in \Fcal$ is determined
by the \emph{exchange relation}
\begin{equation}
\label{exchange relation}
x'_k = \frac
{y_k \ \prod x_i^{[b_{ik}]_+}
+ \ \prod x_i^{[-b_{ik}]_+}}{(y_k \oplus 1) x_k} \, .
\end{equation}
\end{itemize}
\end{Def}

We say that two exchange matrices $B$ and $B'$ are {\it mutation-equivalent}
if one can get from $B$ to $B'$ by a sequence of mutations.
\begin{Def}
\label{def:patterns}
Consider the \emph{$n$-regular tree}~$\TT_n$
whose edges are labeled by the numbers $1, \dots, n$,
so that the $n$ edges emanating from each vertex receive
different labels.
A \emph{cluster pattern}  is an assignment
of a labeled seed $\Sigma_t=(\xx_t, \yy_t, B_t)$
to every vertex $t \in \TT_n$, such that the seeds assigned to the
endpoints of any edge $t \overunder{k}{} t'$ are obtained from each
other by the seed mutation in direction~$k$.
The components of $\Sigma_t$ are written as:
\begin{equation}
\label{eq:seed-labeling}
\xx_t = (x_{1;t}\,,\dots,x_{n;t})\,,\quad
\yy_t = (y_{1;t}\,,\dots,y_{n;t})\,,\quad
B_t = (b^t_{ij})\,.
\end{equation}
\end{Def}

Clearly, a cluster pattern  is uniquely determined
by an arbitrary  seed.

\begin{Def}
\label{def:cluster-algebra}
Given a cluster pattern, we denote
\begin{equation}
\label{eq:cluster-variables}
\Xcal
= \bigcup_{t \in \TT_n} \xx_t
= \{ x_{i,t}\,:\, t \in \TT_n\,,\ 1\leq i\leq n \} \ ,
\end{equation}
the union of clusters of all the seeds in the pattern.
The elements $x_{i,t}\in \Xcal$ are called \emph{cluster variables}.
The 
\emph{cluster algebra} $\Acal$ associated with a
given pattern is the $\ZZ \PP$-subalgebra of the ambient field $\Fcal$
generated by all cluster variables: $\Acal = \ZZ \PP[\Xcal]$.
We denote $\Acal = \Acal(\xx, \yy, B)$, where
$(\xx,\yy,B)$
is any seed in the underlying cluster pattern.
\end{Def}

The remarkable {\it Laurent phenomenon} asserts the following.

\begin{theorem} \cite[Theorem 3.1]{FZ1}
\label{Laurent}
The cluster algebra $\Acal$ associated with a seed
$(\xx,\yy,B)$ is contained in the Laurent polynomial ring
$\ZZ \PP [\xx^{\pm 1}]$, i.e.\ every element of $\Acal$ is a
Laurent polynomial over $\ZZ \PP$ in the cluster variables
from $\xx=(x_1,\dots,x_n)$.
\end{theorem}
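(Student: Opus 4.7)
The plan is to prove the Laurent phenomenon by induction on the distance $d(t_0,t)$ in the $n$-regular tree $\TT_n$, where $t_0$ is the vertex carrying the initial seed $(\xx,\yy,B)$. Since $\Acal$ is generated as a $\ZZ\PP$-algebra by its cluster variables, it suffices to prove that every $x_{i;t}$ lies in $\ZZ\PP[\xx^{\pm 1}]$. The base case $d=0$ is trivial, and the case $d=1$ is immediate from the exchange relation \eqref{exchange relation}, which exhibits the new cluster variable as a Laurent polynomial in $\xx$ with coefficients in $\ZZ\PP$ (the factor $(y_k\oplus 1)^{-1}$ is a unit in $\ZZ\PP$ since $\PP$ is a semifield).

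For the inductive step, fix a shortest path $t_0 \overunder{k_1}{} t_1 \overunder{k_2}{} \cdots \overunder{k_{d+1}}{} t_{d+1}$ and consider the new cluster variable $z = x_{k_{d+1};t_{d+1}}$. By the induction hypothesis, every cluster variable appearing in $\xx_{t_d}$ already lies in $\ZZ\PP[\xx^{\pm 1}]$. The exchange relation at $t_d$ expresses $z \cdot x_{k_{d+1};t_d}$ as a binomial $M$ in the entries of $\xx_{t_d}$; substituting the known Laurent expansions presents $z$ as $P/Q \in \QQ\PP(\xx)$ with $P,Q \in \ZZ\PP[\xx^{\pm 1}]$, where $Q$ is the Laurent expansion of $x_{k_{d+1};t_d}$. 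Since $\ZZ\PP[\xx^{\pm 1}]$ is a UFD, the problem reduces to showing that every non-monomial irreducible factor of $Q$ also divides $P$ with equal multiplicity.

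The essential combinatorial input is a coprimality property: for two seeds at vertices $s,s'$ related by a single mutation at $k$, the Laurent expansions of $x_{k;s}$ and $x_{k;s'}$ in $\ZZ\PP[\xx^{\pm 1}]$ share no non-unit common factor. In the special case $s=t_0$, this is transparent from the exchange relation, whose binomial right-hand side is coprime to $x_k$ because both monomial summands involve only $x_i$ with $i\neq k$. The task is to promote this local coprimality to an analogous statement in $\ZZ\PP[\xx^{\pm 1}]$ for pairs of mutated cluster variables farther from $t_0$, so that when we substitute into the exchange relation for $z$ above, every factor of $Q$ is matched by an equal-multiplicity factor of $P$, leaving at worst a monomial denominator.

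The main obstacle, and the core of the Fomin--Zelevinsky argument, is that coprimality statements for adjacent clusters do not immediately compose along long paths in $\TT_n$. The remedy is the \emph{caterpillar} (or three-term) induction: one strengthens the inductive hypothesis to simultaneously track, for each consecutive triple of vertices along the path, (i) the Laurent-polynomial property with respect to $\xx_{t_0}$, and (ii) the pairwise coprimality in $\ZZ\PP[\xx_{t_0}^{\pm 1}]$ of the two cluster variables exchanged by the central mutation of the triple. Combining (i) and (ii) at each inductive step forces the apparent denominator $Q$ above to cancel against factors of $P$, completing the proof that every cluster variable, and hence every element of $\Acal$, lies in $\ZZ\PP[\xx^{\pm 1}]$.
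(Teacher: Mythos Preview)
The paper does not prove this theorem; it is quoted from \cite{FZ1} as a background result, so there is no proof here to compare against.

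On its own terms, your sketch correctly names the shape of the Fomin--Zelevinsky argument (strengthen the induction to carry a coprimality condition), but the final paragraph is a description of a plan, not a proof. The gap is exactly at ``Combining (i) and (ii) at each inductive step forces the apparent denominator $Q$ above to cancel.'' With $w = x_{k_{d+1};t_d}$ and $z \cdot w = M$, your hypothesis (ii) concerns coprimality of variables exchanged at \emph{earlier} edges of the path; it says nothing about the irreducible factors of $w$, nor why they should divide the binomial $M$, which is built from the \emph{other} cluster variables at $t_d$. The argument in \cite{FZ1} does not attempt to show $w \mid M$. Instead one uses a shorter induction to write $z$ as a Laurent polynomial with respect to the seed $t_1$, substitutes $x_{k_1;t_1} = P_1/x_{k_1}$ to land in $\ZZ\PP[\xx^{\pm 1}][P_1^{-1}]$, and then eliminates $P_1$ from the denominator via an explicit algebraic identity relating the exchange binomials at the first two edges of the spine. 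That identity is the content of the Caterpillar Lemma, and verifying it for cluster exchange relations is the nontrivial computation you have omitted; without it the induction does not close. A smaller point: your appeal to $\ZZ\PP[\xx^{\pm 1}]$ being a UFD is not valid for an arbitrary semifield $\PP$, since group rings over $\ZZ$ need not be UFDs; the original proof sidesteps this by working first with universal (polynomial) coefficients and then specializing.
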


\begin{remark}\label{rectangular}
In cluster algebras whose ground ring is 
$\Trop(u_1,\dots, u_{m})$ (the tropical semifield), it is convenient to replace the
matrix $B$ by an $(n+m)\times n$ matrix $\tilde B=(b_{ij})$ whose upper part
is the $n\times n$ matrix $B$ and whose lower part is an $m\times
n$ matrix that encodes the coefficient tuple via
\begin{equation}\label{eq 20}
y_k = \prod_{i=1}^{m} u_i^{b_{(n+i)k}}.
\end{equation}  
Then the mutation of the coefficient tuple in equation (\ref{eq:y-mutation}) 
is determined by the mutation
of the matrix $\tilde B$ in equation (\ref{eq:matrix-mutation}) and the formula (\ref{eq 20}); and the
exchange relation (\ref{exchange relation}) becomes
\begin{equation}\label{geometric exchange}
 x_k'=x_k^{-1} \left( \prod_{i=1}^n x_i^{[b_{ik}]_+}
\prod_{i=1}^{m} u_i^{[b_{(n+i)k}]_+} 
+\prod_{i=1}^n x_i^{[-b_{ik}]_+}
\prod_{i=1}^{m} u_i^{[-b_{(n+i)k}]_+}
\right).
\end{equation}  
\end{remark}
\subsection{Cluster algebras with principal    coefficients}\label{sect principal coefficients}

Fomin and Zelevinsky introduced in \cite{FZ4} a special type of
coefficients, called \emph{principal coefficients}.

\begin{Def}
\label{def:principal-coeffs}
We say that a cluster pattern $t \mapsto (\xx_t, \yy_t,B_t)$ on $\TT_n$
(or the corresponding cluster algebra~$\Acal$)  has
\emph{principal coefficients at a vertex~$t_0$} if
$\PP= \Trop(y_1, \dots, y_n)$ and
$\yy_{t_0}= (y_1, \dots, y_n)$. \linebreak[3]
In this case, we denote $\Acal=\Aprin(B_{t_0})$.
\end{Def}

\begin{remark}
\label{rem:principal-tildeB}
Definition~\ref{def:principal-coeffs} can be rephrased
as follows: a cluster algebra~$\Acal$ has principal coefficients at a
vertex~$t_0$ if $\Acal$ is of geometric type,
and is associated with the matrix $\tilde B_{t_0}$ of order $2n \times n$
whose upper part is $B_{t_0}$,
and whose complementary (i.e., bottom) part is
the $n \times n$ identity matrix (cf.\ \cite[Corollary~5.9]{FZ1}).
\end{remark}
 
\begin{Def}
\label{def:Aprin}
Let~$\Acal$ be the cluster \linebreak[3]
algebra with principal coefficients at 
$t_0$, defined by the initial seed
$\Sigma_{t_0}=(\xx_{t_0}\,,\yy_{t_0}\,,B_{t_0})$ with
\begin{equation}
\label{eq:initial-seed}
\xx_{t_0} = (x_1, \dots, x_n), \quad
\yy_{t_0} = (y_1, \dots, y_n), \quad
B_{t_0} = B^0 = (b^0_{ij})\,.
\end{equation}
By the Laurent phenomenon, we
can express every cluster variable $x_{\ell;t}$ as a (unique)
Laurent polynomial in $x_1, \dots, x_n, y_1, \dots, y_n$; 
we denote this by 
\begin{equation}
\label{eq:X-sf}
X_{\ell;t} 
= X_{\ell;t}^{B^0;t_0}.
\end{equation}
Let $F_{\ell;t} =
F_{\ell;t}^{B^0;t_0}$
denote the Laurent polynomial obtained from $X_{\ell;t}$ by
\begin{equation}
\label{eq:F-def}
F_{\ell;t}(y_1, \dots, y_n) = X_{\ell;t}(1, \dots, 1; y_1, \dots, y_n).
\end{equation}
$F_{\ell;t}(y_1,\dots,y_n)$ turns out to be
a polynomial \cite{FZ4} and is called an \emph{F-polynomial}.
\end{Def}

\begin{proposition}\cite[Corollary 6.2]{FZ4} \label{g}
Consider any rank $n$ cluster algebra, defined by an 
$n \times n$ exchange matrix $B$,
and consider the \emph{$\gg$-vector grading} given by
$\textup{deg}(x_i)=\mathbf{e}_i$ and
$\textup{deg}(y_j)=-\mathbf{b}_j$, where
$\mathbf{e}_i=(0,\ldots,0,1,0,\ldots,0) \in\mathbb{Z}^n$ with $1$ at
position $i$, and $\mathbf{b}_j = \sum_i b_{ij} \mathbf{e}_i$ is the $j$th
column of $B$.
Then the Laurent expansion of any cluster variable,
with respect to the seed
$(\mathbf{x},\mathbf{y},B)$, is homogeneous with respect to this grading.
\end{proposition}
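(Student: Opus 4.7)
The plan is a double induction on the distance $d(t_0,t)$ in the $n$-regular tree $\TT_n$ from the initial vertex $t_0$. For the base case $d=0$, the cluster variables at $t_0$ are $x_1,\dots,x_n$, which are monomials of degrees $\mathbf{e}_1,\dots,\mathbf{e}_n$ and hence trivially homogeneous. For the inductive step, assume every cluster variable at $t$ is homogeneous and write $\mathbf{g}_{i;t}:=\deg(X_{i;t})\in\Z^n$. Writing the exchange relation at an adjacent vertex $t\overunder{k}{}t'$ in the geometric-type form \eqref{geometric exchange}, using the extended matrix $\tilde B_t=\binom{B_t}{C_t}$ from Remark~\ref{rectangular}, the new cluster variable is
\[
X_{k;t'} \;=\; X_{k;t}^{-1}\Bigl(\,\prod_i X_{i;t}^{[b_{ik}^t]_+}\prod_j y_j^{[c_{jk}^t]_+}\;+\;\prod_i X_{i;t}^{[-b_{ik}^t]_+}\prod_j y_j^{[-c_{jk}^t]_+}\,\Bigr).
\]
Since each $X_{i;t}$ is already homogeneous by hypothesis, $X_{k;t'}$ will itself be homogeneous as soon as the two monomials in parentheses share the same total degree.

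Using $[a]_+-[-a]_+=a$ together with $\deg y_j=-\mathbf{b}_j$, the required equality of those two monomial degrees is exactly the $k$-th column of the matrix identity
\[
G_t\,B_t \;=\; B\,C_t,\qquad G_t:=[\,\mathbf{g}_{1;t}\mid\cdots\mid\mathbf{g}_{n;t}\,].
\]
I would then establish this identity (a form of tropical duality) by a second induction on $d(t_0,t)$ running in parallel with the homogeneity induction. At $t_0$ we have $G_{t_0}=I$, $C_{t_0}=I$, and $B_{t_0}=B$, so both sides reduce to $B$. For the inductive step, the matrix mutation rule \eqref{eq:matrix-mutation} applied to $\tilde B_t$ simultaneously governs how $B_t$ and $C_t$ transform, while a mutation rule for $G_t$ can be read off from the exchange relation above by tracking degrees and using the already-established homogeneity at $t$.

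The main obstacle I expect is precisely this last piece: extracting a clean mutation formula for $G_t$ and then algebraically verifying that $G_tB_t=BC_t$ is preserved by a single mutation. It amounts to a case-sensitive manipulation of piecewise-linear $[\,\cdot\,]_+$ expressions in which one must split according to the signs of $b_{ik}^t$ and $c_{jk}^t$, but no deep new input beyond the matrix mutation rules is needed. Once that computation is carried out, both inductions close simultaneously, establishing homogeneity of every cluster variable and, as a byproduct, making $\mathbf{g}_{\ell;t}:=\deg(X_{\ell;t})$ a well-defined element of $\Z^n$---the $\mathbf{g}$-vector of $X_{\ell;t}$.
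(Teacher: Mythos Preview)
The paper does not prove this proposition at all; it simply cites \cite[Corollary~6.2]{FZ4}. So there is no ``paper's proof'' to compare against, and what matters is whether your argument stands on its own and how it compares to the original argument in \cite{FZ4}.

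Your reduction is correct: homogeneity of $X_{k;t'}$, given homogeneity at $t$, is equivalent to the $k$th column of $G_tB_t=BC_t$. The gap is in how you propose to close the parallel induction. You say a mutation rule for $G_t$ ``can be read off from the exchange relation\dots\ using the already-established homogeneity at $t$.'' But homogeneity at $t$ alone does \emph{not} determine $\mathbf{g}_{k;t'}$: the exchange relation presents $X_{k;t'}X_{k;t}$ as a sum of two monomials, and each gives a candidate value
\[
\mathbf{g}_{k;t'}^{(\epsilon)}=-\mathbf{g}_{k;t}+G_t[\epsilon\,\mathbf{b}_k^t]_+-B[\epsilon\,\mathbf{c}_k^t]_+,\qquad \epsilon=\pm 1.
\]
These two agree precisely when the $k$th column of $G_tB_t=BC_t$ holds. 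So you must first invoke that column of the inductive hypothesis to make $G_{t'}$ well-defined, and only then attempt to verify $G_{t'}B_{t'}=BC_{t'}$. That verification is the entire content; it is not a routine consequence of matrix mutation, and dismissing it as ``no deep new input'' understates it. In fact, carrying it out directly is essentially re-deriving the key lemma of \cite{FZ4} in disguise.

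The argument in \cite{FZ4} sidesteps this bookkeeping. One observes that the ratio of the two exchange monomials is exactly $\hat{y}_{k;t}:=y_{k;t}\prod_i x_{i;t}^{b_{ik}^t}$. By \cite[Proposition~3.9]{FZ4}, the elements $\hat{y}_{j;t}$ form a $Y$-pattern in the ambient field, so each $\hat{y}_{j;t}$ is a subtraction-free rational function of $\hat{y}_1,\dots,\hat{y}_n$. Since $\deg(\hat{y}_j)=\deg(y_j)+\mathbf{b}_j=0$ for all $j$, every $\hat{y}_{j;t}$ is homogeneous of degree $0$. Hence the two exchange monomials have the same degree, and homogeneity propagates by a single, non-circular induction. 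This is both shorter and conceptually cleaner than tracking the matrix identity $G_tB_t=BC_t$ through mutation; if you want to pursue your route, the missing computation is equivalent in difficulty to proving \cite[Proposition~3.9]{FZ4}, so you may as well quote or reprove that instead.
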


\begin{definition}
The \emph{$\gg$-vector} $\gg(x_\zg)$ of a cluster variable $x_\zg$,
with respect to the seed
$(\mathbf{x},\mathbf{y},B)$, is the multidegree 
of the Laurent expansion of $x_\zg$ with respect to 
$(\mathbf{x},\mathbf{y},B)$, using 
the $\gg$-vector grading of Proposition \ref{g}.
\end{definition}

\begin{remark}\label{rem g}
It follows from Proposition \ref{g} that the monomial in 
$x_i$'s and $y_j$'s whose exponent vector is the column
$\tilde{\mathbf{b}}_j$ of the extended $2n \times n$
matrix $\widetilde{B}$ has degree $0$.
\end{remark}
\begin{prop}\label{suffice}
Let  $\widetilde{B}$ be an $m \times n$ extended exchange matrix,
with linearly independent columns, and
let $\A = \A(\widetilde{B})$ be the associated cluster algebra,
with initial seed $(\{x_1,\dots,x_n\},\widetilde{B})$,
and coefficient variables $x_{n+1},\dots,x_m$.  
Let $U$ be a set of elements in $\A(\widetilde{B})$,
whose Laurent expansions with respect to the initial seed 
all have the form 
$$\x^g + \lambda_h \sum_h \x^{g+h},$$
where $\x^a$ denotes $x_1^{a_1}\dots x_m^{a_m}$, 
$\lambda_h$ is a scalar, 
and each $h$ is a 
non-negative linear combination of columns of $\widetilde{B}$.
Suppose moreover that the vectors $g$ and $g'$ associated to 
two different elements of $U$ differ in at least one of the 
first $n$ coordinates.
Then the elements of $U$
are linearly independent over the ground ring of $\A$.
\end{prop}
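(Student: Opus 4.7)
The plan is to reduce to the case of a finite subset of $U$ (linear independence is a finite condition) and argue by contradiction: assume $\sum_i c_i u_i = 0$ is a nontrivial relation with $c_i \in \mathbb{Z}\mathbb{P}$, and exhibit a monomial in the Laurent expansion whose coefficient cannot vanish. The organizing device is the partial order $\le$ on $\mathbb{Z}^m$ defined by $a \le b$ iff $b - a = \sum_k n_k\,\tilde{\mathbf{b}}_k$ for some $n_k \in \mathbb{Z}_{\ge 0}$. The assumed linear independence of the columns of $\widetilde{B}$ guarantees that the cone $K = \{\sum_k n_k\,\tilde{\mathbf{b}}_k : n_k \in \mathbb{Z}_{\ge 0}\}$ is pointed (if both $v$ and $-v$ lie in $K$, adding the two expressions yields a vanishing nonnegative combination of the $\tilde{\mathbf{b}}_k$, forcing $v = 0$), so $\le$ is genuinely antisymmetric.

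Next, I would expand each coefficient as $c_i = \sum_\alpha c_{i,\alpha}\,\x^{(0,\alpha)}$, with $\alpha \in \mathbb{Z}^{m-n}$ indexing the last $m-n$ coordinates. Form the finite set
\[
A = \{(i,\alpha) : c_{i,\alpha} \neq 0\}, \qquad E = \{(0,\alpha) + g_i : (i,\alpha) \in A\} \subset \mathbb{Z}^m,
\]
and pick a $\le$-minimal element $\gamma^* \in E$ (it exists because $E$ is finite). I would then compute the coefficient of $\x^{\gamma^*}$ in the expansion of $\sum_i c_i u_i$.

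The payoff is that only the leading monomial $\x^{g_i}$ of each $u_i$ contributes to $\x^{\gamma^*}$: a higher-order term $\lambda_h\,\x^{g_i+h}$ with $h \in K \setminus \{0\}$ would contribute via some $(i,\alpha) \in A$ only if $(0,\alpha) + g_i + h = \gamma^*$, which forces $(0,\alpha)+g_i < \gamma^*$, contradicting minimality. Hence the coefficient of $\x^{\gamma^*}$ is
\[
\sum_{\substack{(i,\alpha) \in A \\ (0,\alpha)+g_i = \gamma^*}} c_{i,\alpha}.
\]
The hypothesis on $U$ now enters decisively: since $(0,\alpha)$ is zero in the first $n$ coordinates, the condition $(0,\alpha)+g_i = \gamma^*$ prescribes the first $n$ coordinates of $g_i$, which by assumption determines $i$ uniquely as some $i^*$; then $\alpha$ is forced to be $\alpha^* := (\gamma^* - g_{i^*})_{>n}$. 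Because $\gamma^* \in E$, we have $(i^*,\alpha^*) \in A$, so the coefficient of $\x^{\gamma^*}$ in $\sum_i c_i u_i$ is the single nonzero scalar $c_{i^*,\alpha^*}$, contradicting the assumed relation.

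The only delicate point is the verification that $\le$ is a partial order, which is exactly where the full-rank hypothesis is used. Beyond that, the argument is essentially bookkeeping with leading terms and requires no deeper cluster-algebraic input.
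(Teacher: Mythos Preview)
Your proof is correct and follows essentially the same approach as the paper: both use the partial order on $\mathbb{Z}^m$ induced by the pointed cone of nonnegative integer combinations of the columns of $\widetilde{B}$ (which is where the full-rank hypothesis enters), identify $\x^{g}$ as the unique minimal term of each element, and then use the fact that multiplying by coefficient monomials only affects the last $m-n$ coordinates to conclude that leading exponents remain pairwise distinct. Your write-up is more explicit than the paper's in two places---you spell out the antisymmetry verification, and you expand the ground-ring coefficients into monomials and pick a $\le$-minimal element of the resulting finite set---whereas the paper compresses this into the phrase ``even if we multiply each element of $U$ by an arbitrary monomial in coefficient variables, the leading terms will still have pairwise distinct exponent vectors''; but the underlying argument is the same.
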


The proof below comes from the arguments of 
\cite[Remark 7.11]{FZ4}.
\begin{proof}
Because the columns of $\widetilde{B}$ are linearly 
independent, we can define a partial order on $\Z^m$ by 
$u \leq v$ if and only if $v$ can be obtained from $u$ 
by adding a non-negative linear combination of columns of 
$\widetilde{B}$.  Applying this partial order to 
Laurent monomials in $\{x_1,\dots,x_m\}$, it follows
that each element $\x^g + \lambda_h \sum_h \x^{g+h}$ of $U$ has
leading term $\x^g$.  Moreover, all leading terms have pairwise distinct
exponent vectors, and 
even if we multiply each element of $U$ by 
an arbitrary monomial in coefficient variables $x_{n+1},\dots,x_m$,
the leading terms will still have pairwise distinct exponent vectors.
Therefore any linear combination of elements of $U$ which sums to $0$
must necessarily have all coefficients equal to $0$.
\end{proof}


\subsection{Cluster algebras arising from 
    surfaces}\label{sect surfaces} 

We follow the work of Fock and Goncharov \cite{FG1, FG2}, 
Gekhtman, Shapiro and Vainshtein \cite{GSV}, and
Fomin, Shapiro and Thurston \cite{FST}, who associated a cluster algebra
to any {\it bordered surface with marked points}. 
 In 
this section we will recall that construction in the special case of surfaces without punctures.

\begin{Def}
Let $S$ be a connected oriented 2-dimensional Riemann surface with
nonempty
boundary, and let $M$ be a nonempty finite subset of the boundary of $S$, such that each boundary component of $S$ contains at least one point of $M$. The elements of $M$ are called {\it marked points}. The
pair $(S,M)$ is called a \emph{bordered surface with marked points}.
\end{Def}
 
For technical reasons, we require that $(S,M)$ is not
a disk with 1,2 or 3 marked points.

\begin{definition}
An \emph{arc} $\zg$ in $(S,M)$ is a curve in $S$, considered up
to isotopy, such that: 
\begin{itemize}
\item[(a)] 
the endpoints of $\zg$ are in $M$;
\item[(b)] 
$\zg$ does not cross itself, except that its endpoints may coincide;
\item[(c)] 
except for the endpoints, $\zg$ is disjoint from   the boundary of $S$; and
\item[(d)] 
$\zg$ does not cut out a monogon or a bigon. 
\end{itemize}   
\end{definition}     

Curves that connect two
marked points and lie entirely on the boundary of $S$ without passing
through a third marked point are \emph{boundary segments}.
Note that boundary segments are not   arcs.

\begin{Def}
[\emph{Crossing numbers and compatibility of ordinary arcs}]
For any two arcs $\zg,\zg'$ in $S$, let $e(\zg,\zg')$ be the minimal
number of crossings of 
arcs $\za$ and $\za'$, where $\za$ 
and $\za'$ range over all arcs isotopic to 
$\zg$ and $\zg'$, respectively.
We say that arcs $\zg$ and $\zg'$ are  \emph{compatible} if $e(\zg,\zg')=0$. 
\end{Def}

\begin{Def}
A \emph{triangulation} is a maximal collection of
pairwise compatible arcs (together with all boundary segments). 
\end{Def}

\begin{Def}
Triangulations are connected to each other by sequences of 
{\it flips}.  Each flip replaces a single arc $\gamma$ 
in a triangulation $T$ by a (unique) arc $\gamma' \neq \gamma$
that, together with the remaining arcs in $T$, forms a new 
triangulation.
\end{Def}

\begin{Def}
Choose any   triangulation
$T$ of $(S,M)$, and let $\tau_1,\tau_2,\ldots,\tau_n$ be the $n$ arcs of
$T$.
For any triangle $\Delta$ in $T$, we define a matrix 
$B^\Delta=(b^\Delta_{ij})_{1\le i\le n, 1\le j\le n}$  as follows.
\begin{itemize}
\item $b_{ij}^\Delta=1$ and $b_{ji}^{\Delta}=-1$ if $\tau_i$ and $\tau_j$ are sides of 
  $\Delta$ with  $\tau_j$ following $\tau_i$  in the 
  clockwise order.
\item $b_{ij}^\Delta=0$ otherwise.
\end{itemize}
 
Then define the matrix 
$ B_{T}=(b_{ij})_{1\le i\le n, 1\le j\le n}$  by
$b_{ij}=\sum_\Delta b_{ij}^\Delta$, where the sum is taken over all
triangles in $T$.
\end{Def}

Note that  $B_{T}$ is skew-symmetric and each entry  $b_{ij}$ is either
$0,\pm 1$, or $\pm 2$, since every arc $\tau$ is in at most two triangles.

\begin{theorem} \cite[Theorem 7.11]{FST} and \cite[Theorem 5.1]{FT}
\label{clust-surface}
Fix a bordered surface $(S,M)$ and let $\Acal$ be the cluster algebra associated to
the signed adjacency matrix of a   triangulation. Then the (unlabeled) seeds $\Sigma_{T}$ of $\Acal$ are in bijection
with  the triangulations $T$ of $(S,M)$, and
the cluster variables are  in bijection
with the arcs of $(S,M)$ (so we can denote each by
$x_{\gamma}$, where $\gamma$ is an arc). Moreover, each seed in $\Acal$ is uniquely determined by its cluster.  Furthermore,
if a   triangulation $T'$ is obtained from another
  triangulation $T$ by flipping an arc $\gamma\in T$
and obtaining $\gamma'$,
then $\Sigma_{T'}$ is obtained from $\Sigma_{T}$ by the seed mutation
replacing $x_{\gamma}$ by $x_{\gamma'}$.
\end{theorem}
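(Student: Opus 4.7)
The plan is to reduce this statement to two ingredients: a local calculation that flips of arcs in triangulations correspond exactly to mutations of the signed adjacency matrix, and the topological fact that the flip graph of triangulations of $(S,M)$ is connected. Given these, the bijection between seeds and triangulations, and the behavior of mutation under flips, follow by induction on the flip distance from a fixed initial triangulation.

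First I would fix an initial triangulation $T_0$ with arcs $\tau_1,\ldots,\tau_n$ and initial seed $\Sigma_{T_0}=(\mathbf{x}_{T_0},\mathbf{y}_{T_0},B_{T_0})$, and verify the central technical claim: if $T'$ is obtained from $T$ by flipping $\tau_k$ to $\tau_k'$, then $B_{T'}=\mu_k(B_T)$. This is a local calculation, since the only triangles of $T$ affected are those (at most two) containing $\tau_k$, which form a quadrilateral $Q$ having $\tau_k$ as a diagonal, replaced by $Q$ with diagonal $\tau_k'$. One then checks entrywise the formula (\ref{eq:matrix-mutation}) by going through the finitely many configurations of neighboring triangles — including the degenerate cases where sides of $Q$ get identified due to the global topology of $S$, or where $\tau_k$ lies in only one triangle (a self-folded edge cannot appear because $(S,M)$ is unpunctured, which is a considerable simplification). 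The signs of the contributions $b_{ij}^{\Delta}$ work out correctly because a flip reverses the clockwise cyclic order of the two arcs meeting at each side of $Q$.

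Next I would invoke the classical topological result that any two triangulations of an unpunctured bordered surface with marked points are connected by a finite sequence of flips; this can be proved by induction on the total intersection number between $T$ and a fixed target triangulation $T'$, showing that whenever $T\neq T'$ there is a flip of $T$ strictly decreasing this number. Combined with the first step, this gives a well-defined surjective map from triangulations to seeds in the mutation class of $\Sigma_{T_0}$ that intertwines flips with mutations, so in particular the last sentence of the theorem follows.

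The main obstacle is injectivity: showing distinct triangulations yield distinct seeds, or equivalently that the map $\gamma\mapsto x_\gamma$ from arcs to cluster variables is injective, so that each arc determines and is determined by its cluster variable. The approach I would take is to assign to each arc $\gamma$ an invariant of $x_\gamma$ that recovers $\gamma$ — the natural candidate being the denominator vector of $x_\gamma$ with respect to $\mathbf{x}_{T_0}$, whose $i$-th entry is the crossing number $e(\gamma,\tau_i)$. Since a curve in an unpunctured surface is determined up to isotopy by its intersection numbers with the arcs of a fixed triangulation, this would force the arc-to-variable map to be injective. Once injectivity of arcs in clusters is in hand, the fact that each cluster determines its seed reduces to observing that both $\mathbf{y}_T$ and $B_T$ are recovered from the combinatorics of $T$, hence from the set of arcs, hence from the cluster.
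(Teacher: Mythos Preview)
The paper does not prove this theorem: it is stated as a citation of \cite[Theorem 7.11]{FST} and \cite[Theorem 5.1]{FT}, so there is no in-paper argument to compare against. That said, your outline is worth assessing on its own merits.

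Your first two steps are sound and are indeed the skeleton of the argument in \cite{FST}: the compatibility of flips with matrix mutation is a local check (this is \cite[Proposition 4.8]{FST}), and connectivity of the flip graph is classical. These give the surjection from triangulations onto seeds and the flip--mutation correspondence.

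The genuine gap is in your injectivity argument. You assert that the denominator vector of $x_\gamma$ with respect to $\xx_{T_0}$ has $i$-th entry equal to the crossing number $e(\gamma,\tau_i)$, and then invoke the fact that intersection numbers with a triangulation determine an arc. Both claims are true, but the first is a substantial theorem, not a definition: it is essentially the denominator conjecture for surface cluster algebras, whose proof requires either the explicit Laurent expansion formulas of \cite{MSW} (i.e., the machinery developed \emph{after} the theorem you are trying to prove) or an independent argument of comparable depth. Treating it as an available tool here is circular, or at best imports results logically downstream of the statement.

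The route actually taken in \cite{FT} sidesteps this entirely. They realize the cluster variables as \emph{lambda lengths} (Penner coordinates) on the decorated Teichm\"uller space of $(S,M)$: the exchange relation \eqref{exchange relation} becomes the classical Ptolemy relation, so mutation of cluster variables literally \emph{is} the transformation of lambda lengths under a flip. Injectivity then follows because distinct arcs give algebraically independent lambda length functions on Teichm\"uller space --- a geometric fact that does not require knowing any Laurent expansion. This is what the reference to \cite[Theorem 5.1]{FT} is providing, and it is the piece your sketch is missing.
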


The exchange relation corresponding to a flip in a  
triangulation
is called 
a {\it generalized Ptolemy relation}.  It can be described as 
follows.
\begin{prop}\cite{FT}\label{Ptolemy}
Let $\alpha, \beta, \gamma, \delta$ be arcs 
or boundary segments 
of $(S,M)$ which cut out a quadrilateral; we assume that the sides
of the quadrilateral, listed in cyclic order, are
$\alpha, \beta, \gamma, \delta$.  Let $\eta$ and $\theta$ 
be the two diagonals of this quadrilateral; see the left of  
Figure \ref{figflip}.
Then 
\begin{equation} \label{shear-exchange} x_{\eta} x_{\theta} = Y x_{\alpha} x_{\gamma} + Y' x_{\beta} x_{\delta}\end{equation}
for some coefficients $Y$ and $Y'$.
\end{prop}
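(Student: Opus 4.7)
The plan is to derive the Ptolemy relation directly from the exchange relation (\ref{exchange relation}), using the translation between flips of diagonals and cluster mutations provided by Theorem \ref{clust-surface}. First I would extend $\eta$ to an ordinary triangulation $T$ of $(S,M)$ in which the two triangles incident to $\eta$ are precisely the two halves of the given quadrilateral; any maximal collection of pairwise compatible arcs containing $\eta$ will do. Flipping $\eta$ in $T$ yields a new triangulation $T'$ in which $\eta$ is replaced by $\theta$, and by Theorem \ref{clust-surface} the associated seeds satisfy $\Sigma_{T'} = \mu_\eta(\Sigma_T)$. Applying (\ref{exchange relation}) to this mutation immediately gives
$$x_\eta\, x_\theta \;=\; \frac{y_\eta \prod_i x_{\tau_i}^{[b_{i\eta}]_+} \,+\, \prod_i x_{\tau_i}^{[-b_{i\eta}]_+}}{y_\eta \oplus 1},$$
where $\tau_i$ ranges over the arcs of $T$ and $b_{i\eta}$ denotes the $(i,\eta)$-entry of $B_T$.

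The next step is to identify the nonzero entries of the $\eta$-column of $B_T$. By the definition of $B^\Delta$, only arcs that share a triangle with $\eta$ can contribute, so the support is contained in $\{\alpha, \beta, \gamma, \delta\}$. Label the vertices of the quadrilateral $v_1, v_2, v_3, v_4$ in the cyclic order induced by the sides $\alpha, \beta, \gamma, \delta$ and take $\eta = v_1 v_3$. Then $\eta$ belongs to exactly the two triangles $v_1 v_2 v_3$ and $v_1 v_3 v_4$, whose clockwise boundary sequences (with respect to the orientation of $S$) are $(\alpha, \beta, \eta)$ and $(\eta, \gamma, \delta)$, respectively. A direct check from the definition of $b^\Delta_{ij}$ then yields
$$b_{\alpha\eta} = b_{\gamma\eta} = -1, \qquad b_{\beta\eta} = b_{\delta\eta} = +1,$$
so that $\prod_i x_{\tau_i}^{[b_{i\eta}]_+} = x_\beta x_\delta$ and $\prod_i x_{\tau_i}^{[-b_{i\eta}]_+} = x_\alpha x_\gamma$. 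Substituting back produces the asserted identity, with $Y = (y_\eta \oplus 1)^{-1}$ and $Y' = y_\eta (y_\eta \oplus 1)^{-1}$.

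Two mild subtleties remain, and handling them correctly is the main obstacle. If some of $\alpha, \beta, \gamma, \delta$ is a boundary segment rather than an arc of $T$, it is not indexed by a column of $B_T$ but appears as a frozen row of the extended exchange matrix $\widetilde{B}$ (Remark \ref{rectangular}). The same sign analysis, applied to the geometric-type exchange relation (\ref{geometric exchange}), then places the boundary variable into the appropriate product on the right-hand side, with any remaining factors absorbed into $Y$ and $Y'$. If two of the four sides of the quadrilateral are identified in $(S,M)$ (which can happen when the quadrilateral is self-glued along an edge), the sign contributions from both triangles land on the same variable, producing a squared factor such as $x_\alpha^2$; the shape of the relation is unaffected. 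Once the two-triangle sign bookkeeping is done cleanly, the identity is an immediate consequence of (\ref{exchange relation}).
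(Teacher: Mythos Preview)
Your argument is correct in substance, and it takes a different route from the paper. The paper simply cites \cite{FT}, invoking the identification of cluster variables with lambda lengths and the classical Ptolemy relation for lambda lengths; it does no computation with $B_T$. You instead derive the relation directly from the exchange relation~(\ref{exchange relation}) by reading off the $\eta$-column of the signed adjacency matrix. Your approach is more self-contained (it stays inside the algebraic setup of Section~\ref{sect2} and avoids Teichm\"uller theory), while the paper's citation is shorter and yields the lambda-length interpretation for free.

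Two small points deserve tightening. First, the sentence ``any maximal collection of pairwise compatible arcs containing $\eta$ will do'' is not literally true: you need the two triangles adjacent to $\eta$ in $T$ to be the given ones, so you must include in $T$ those of $\alpha,\beta,\gamma,\delta$ that are arcs (they are all compatible with $\eta$, so this is always possible). You say this correctly in the preceding sentence, but the parenthetical justification contradicts it. Second, in this paper's setup boundary segments are \emph{not} frozen rows of $\widetilde B$; rather, $x_\tau=1$ for any boundary segment $\tau$ (as recalled just before Definition~\ref{def:matching}). So when a side is a boundary segment it simply fails to appear in the product $\prod_i x_{\tau_i}^{[\pm b_{i\eta}]_+}$, and the relation still matches~(\ref{shear-exchange}) because the missing factor equals $1$. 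The conclusion you draw is right; only the mechanism you cite is off.
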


\begin{proof}
This follows from the interpretation of cluster variables as 
{\it lambda lengths}  and the 
Ptolemy relations for lambda lengths \cite[Theorem 7.5 and Proposition 6.5]{FT}.
\end{proof}

\begin{figure}
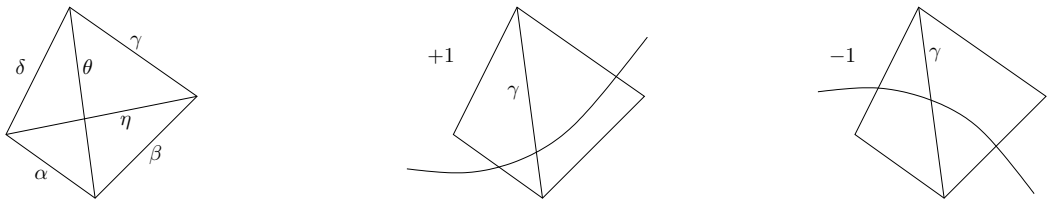
 \begin{center}
\scalebox{.8}{\input{figflip.pstex_t}} \hspace{6em} \scalebox{.8}{\input{figsz.pstex_t}}
\end{center}
\caption{
Exchange relation and shear coordinates}
\label{figflip}
\end{figure}

\subsubsection{Keeping track of coefficients using laminations}

So far we have not addressed the topic of 
coefficients for cluster algebras arising from bordered surfaces.
It turns out that 
W. Thurston's theory of measured laminations \cite{Shear} gives a 
concrete way to think about coefficients, 
as described in \cite[Sections 11-12]{FT} (see also \cite{FG3}).

\begin{Def}
A {\it lamination} on a bordered surface $(S,M)$ is a finite collection
of non-self-intersecting and pairwise non-intersecting curves in $S$,
modulo isotopy relative to $M$, subject to the following restrictions.
Each curve must be one of the following:
\begin{itemize}
\item a closed curve;
\item a curve connecting two unmarked points on the boundary of $S$.
\end{itemize}
Also, we forbid curves  with two endpoints on the boundary of $S$ which
are isotopic to a piece of boundary containing zero or one
 marked point.
\end{Def}

\begin{Def}
\label{def:shear}
Let $L$ be a lamination, and let $T$ be a triangulation.
For each arc $\gamma \in T$, 
the corresponding {\it shear coordinate} of $L$ with respect to $T$, 
denoted by $b_{\gamma}(T,L)$, is defined as a sum of contributions
from all intersections of curves in $L$ with $\gamma$.
Specifically, such an intersection contributes $+1$ (resp., $-1$)
to $b_{\gamma}(T,L)$ if the corresponding segment of a curve in 
$L$ cuts through the quadrilateral surrounding $\gamma$
as shown in Figure \ref{figflip} in the middle (resp., right).
\end{Def}

\begin{Def}
A {\it multi-lamination} is a finite family of laminations.
For any multi-lamination $\mathbf{L}=(L_{n+1}, \dots, L_{n+m})$  
and any triangulation $T$ of $(S,M)$,
define the matrix $\tilde{B} = \tilde{B}_{T,\mathbf{L}} = (b_{ij})$
as follows.  The top $n \times n$ part of $\tilde{B}$ is the signed
adjacency matrix $B_T$, with rows and columns indexed by arcs
$\gamma \in T$.
The bottom $m$ rows
are formed by the shear coordinates of the laminations $L_i$
with respect to $T$:
$$b_{n+i,\gamma} = b_{\gamma}(T, L_{n+i}) \text{ if }1 \leq i \leq m.$$
\end{Def}

By  \cite[Theorem 11.6]{FT},
the matrices
$\tilde{B}_{T,L}$ transform compatibly with mutation.

\begin{figure} \begin{center}
\input{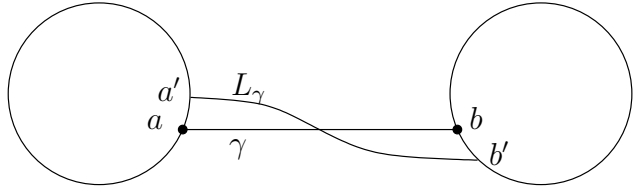}
\end{center}
\caption{Elementary lamination $L_\gamma$ corresponding to $\gamma$}
\label{laminate}
\end{figure}

\begin{Def}
\label{Li}
Let $\gamma$ be an arc in $(S,M)$.  Denote by 
$L_{\gamma}$ a lamination consisting of a single curve
defined as follows.  The curve $L_{\gamma}$ runs along 
$\gamma$ within a small neighborhood of it.  If $\gamma$
has an endpoint $a$ on a (circular) component $C$ of the boundary of $S$,
then $L_{\gamma}$ begins at a point $a'\in C$ located near $a$ in 
the counterclockwise direction, and proceeds along $\gamma$
as shown in Figure \ref{laminate}.
If $T$ is a triangulation, we let 
$L_T = (L_{\gamma})_{\gamma \in T}$ be the multi-lamination 
consisting of elementary laminations associated with 
the   arcs in $T$, and we call it 
the \emph{multi-lamination associated with $T$}.
\end{Def}

The following result comes from \cite[Proposition 16.3]{FT}.

\begin{prop}
Let $T$ be a   triangulation with   signed adjacency matrix
$B_T$. 
Let $L_T = (L_{\gamma})_{\gamma \in T}$
be the multi-lamination associated with $T$.
Then $\mathcal A(\tilde{B}_{T, L_T})$ is isomorphic to the cluster algebra with principal coefficients with respect to the matrix $B_T$, that is
$\Aprin(B_T) \cong
\mathcal A(\tilde{B}_{T, L_T})$.
\end{prop}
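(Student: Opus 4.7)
The plan is to verify a simple matrix equality. By Remark \ref{rem:principal-tildeB}, $\Aprin(B_T)$ is, by definition, the geometric-type cluster algebra associated with the $2n \times n$ matrix whose top block is $B_T$ and whose bottom block is the identity $I_n$. So proving the isomorphism reduces to showing that $\tilde B_{T, L_T}$ is exactly that matrix. By construction the top $n \times n$ block of $\tilde B_{T, L_T}$ is $B_T$, so the whole proof collapses to the shear-coordinate identity
$$ b_{\gamma}(T, L_{\gamma'}) = \delta_{\gamma, \gamma'} \qquad \text{for all } \gamma, \gamma' \in T. $$

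I would establish this by a local, picture-driven case analysis using Definitions \ref{def:shear} and \ref{Li}. For the diagonal case $\gamma = \gamma'$, the elementary lamination $L_\gamma$ is a parallel push-off of $\gamma$, shifted counterclockwise at each endpoint. Tracking $L_\gamma$ through the quadrilateral surrounding $\gamma$ shows that it meets $\gamma$ transversely in a single point, and the local picture there is exactly the $+1$-configuration of Figure \ref{figflip} (middle); hence $b_\gamma(T, L_\gamma) = 1$.

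For the off-diagonal case $\gamma \neq \gamma'$, both arcs lie in the triangulation $T$ and therefore do not cross. Since $L_{\gamma'}$ can be taken to lie in an arbitrarily small tubular neighborhood of $\gamma'$, the only way $L_{\gamma'}$ can meet $\gamma$ is in a small neighborhood of an endpoint shared by $\gamma$ and $\gamma'$. At any such endpoint, the counterclockwise shift prescribed in Definition \ref{Li} forces $L_{\gamma'}$ to enter (and leave) the quadrilateral around $\gamma$ through a corner, rather than cutting across it in the ``S''-shape of either diagram in Figure \ref{figflip}. Such a configuration contributes $0$ to the shear coordinate; summing over the (at most two) shared endpoints then yields $b_\gamma(T, L_{\gamma'}) = 0$.

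The main technical point, though not deep, is the sign bookkeeping at the shared endpoints: one must check that the \emph{counterclockwise} convention in Definition \ref{Li} is what produces $+I_n$ rather than some permuted or signed variant; the opposite convention would give $-I_n$ instead. Once the diagonal and off-diagonal cases are each verified, the identity $\tilde B_{T, L_T} = \tilde B_{T_0}^{\mathrm{prin}}$ is immediate and the claimed isomorphism $\Aprin(B_T) \cong \Acal(\tilde B_{T, L_T})$ follows; this is of course the content of \cite[Proposition 16.3]{FT}.
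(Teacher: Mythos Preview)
Your proposal is correct. The paper does not supply its own proof of this proposition; it simply records that the result comes from \cite[Proposition~16.3]{FT}. Your argument---reducing the claim to the shear-coordinate identity $b_{\gamma}(T, L_{\gamma'}) = \delta_{\gamma,\gamma'}$ and then verifying that identity by a local picture analysis near the quadrilateral around $\gamma$---is exactly the content of that cited result, and you correctly acknowledge this at the end. So there is nothing to compare: you have filled in the argument the paper chose to outsource, and done so along the expected lines.
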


\subsection{Skein relations}
In this section we review some results from \cite{MW}.
\begin{definition}   \label{gen-arc}
A \emph{generalized arc}  in $(S,M)$ is a curve $\gamma$ in $S$ such that:
\begin{itemize}
\item[(a)] the endpoints of $\gamma$ are in $M$; 
\item[(b)] except for the endpoints,
$\gamma$ is disjoint  from the boundary of $S$; and
\item[(c)]
$\gamma$ does not cut out a monogon or a bigon.
\end{itemize}
\end{definition}

Note that we allow a generalized arc to cross itself a finite 
number of times.  We consider generalized 
arcs up to isotopy (of immersed
arcs).  In particular, an isotopy cannot remove a contractible
kink from a generalized arc.

\begin{definition}
A \emph{closed loop} in $(S,M)$ is a closed curve
$\gamma$ in $S$ which is disjoint from the 
boundary of $S$.  We allow a closed loop to have a finite
number of self-crossings.
As in Definition \ref{gen-arc}, we consider closed
loops up to isotopy.
\end{definition}

\begin{definition}
A closed loop in $(S,M)$ is called \emph{essential} if
  it is not contractible
and it does not have self-crossings.
\end{definition}

\begin{definition} (Multicurve)
We define a \emph{multicurve} to be a finite multiset of generalized  
arcs and closed loops such that there are only a finite number of pairwise crossings among the collection.
We say that a multicurve is \emph{simple}
if there are no pairwise crossings among the collection and 
no self-crossings.
\end{definition}

If a multicurve is not simple, 
then there are two ways to \emph{resolve} a crossing to obtain a multicurve that no longer contains this crossing and has no additional crossings.  This process is known as \emph{smoothing}.

\begin{definition}\label{def:smoothing} (Smoothing) Let $\gamma, \gamma_1$ and $\gamma_2$ be generalized  
arcs or closed loops such that we have
one of the following two cases:

\begin{enumerate}
 \item $\gamma_1$ crosses $\gamma_2$ at a point $x$,
  \item $\gamma$ has a self-crossing at a point $x$.
\end{enumerate}

\noindent Then we let $C$ be the multicurve $\{\gamma_1,\gamma_2\}$ or $\{\gamma\}$ depending on which of the two cases we are in.  We define the \emph{smoothing of $C$ at the point $x$} to be the pair of multicurves $C_+ = \{\alpha_1,\alpha_2\}$ (resp. $\{\alpha\}$) and $C_- = \{\beta_1,\beta_2\}$ (resp. $\{\beta\}$).

Here, the multicurve $C_+$ (resp. $C_-$) is the same as $C$ except for the local change that replaces the crossing {\Large $\times$} with the pair of segments {\LARGE $~_\cap^{\cup}$}
(resp. {\large $\supset \subset$}).    
\end{definition}   
See Figures \ref{skeinfigure} and \ref{skein3figure} for the first case, and
Figure \ref{skein5figure} for the second case.
\begin{figure}
\input{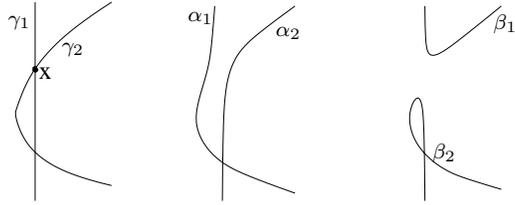}
\caption{Smoothing of two generalized arcs}\label{skeinfigure}
\end{figure}
\begin{figure}
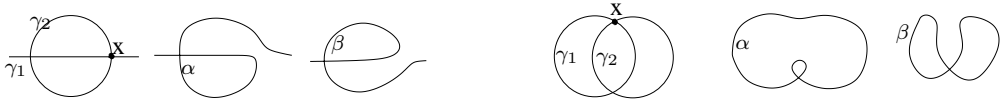

\input{Skein3.pstex_t} \hspace{1.4cm} \input{Skein4.pstex_t}
\caption{Smoothing of two curves where at least one is a loop}\label{skein3figure}
\end{figure}
\begin{figure}
\input{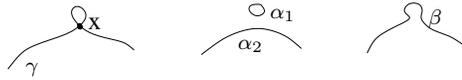}
\caption{Smoothing of a self-intersection}\label{skein5figure}
\end{figure}

Since a multicurve  may contain only a finite number of crossings, by repeatedly applying smoothings, we can associate to any multicurve  a collection of simple multicurves.  
  We call this resulting multiset of multicurves the \emph{smooth resolution} of the multicurve $C$.

\begin{theorem}\cite[Propositions 6.4,6.5,6.6]{MW}\label{th:skein1}
Let $C,$ $ C_{+}$, and $C_{-}$ be as in Definition \ref{def:smoothing}. 
Then we have the following identity in $\Aprin(B_T)$,
\begin{equation*}
x_C = \pm Y_1 x_{C_+} \pm Y_2 x_{C_-},
\end{equation*}
where $Y_1$ and $Y_2$ are monomials in the variables $y_{\tau_i}$.    
The monomials $Y_1$ and $Y_2$ can be expressed using the intersection numbers of the elementary laminations (associated to triangulation $T$) with  
the curves in $C,C_+$ and $C_-$.
\end{theorem}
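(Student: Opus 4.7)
The plan is to reduce the skein identity to a local computation in a small disk $D$ around the crossing point $x$. Since the three multicurves $C$, $C_+$, and $C_-$ agree outside $D$, after factoring out a common "outside" contribution the identity should follow from a generalized Ptolemy-type relation applied to a locally constructed ideal quadrilateral, together with a bookkeeping of the monomials $Y_1, Y_2$ via shear coordinates.

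First, following \cite{FT}, I would interpret each $x_C \in \Aprin(B_T)$ as a (principal-coefficient) lambda length of the multicurve $C$, using that lambda lengths are multiplicative over components of a multicurve. In case (1) of Definition \ref{def:smoothing}, where two distinct generalized arcs meet transversally at $x$, the local picture near $x$ already has the shape of two diagonals $\eta, \theta$ of an ideal quadrilateral whose four sides are precisely the four local strands appearing in $C_+$ and $C_-$ in Figure \ref{skeinfigure}. Applying Proposition \ref{Ptolemy} to that quadrilateral and multiplying both sides by the lambda lengths of the pieces of the multicurve lying outside $D$ would then give the claimed identity. The coefficients $Y_1, Y_2$ are forced to record the contributions of the elementary laminations $L_{\tau_i}$ that pass through $D$, and they can be computed from Definition \ref{def:shear} applied to the three local pictures.

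For cases where at least one of the curves is a closed loop (Figure \ref{skein3figure}) or where the crossing is a self-intersection (Figure \ref{skein5figure}), I would reduce to case (1) by passing to an auxiliary cover. For a closed loop intersecting another curve at $x$, I would cut the loop open at a regular point away from $x$, introducing an auxiliary marked boundary component; this converts the loop into a generalized arc to which the previous argument applies, and the auxiliary boundary can then be collapsed back without changing lambda lengths modulo the coefficients $Y_1, Y_2$. For a self-crossing at $x$, I would lift $\gamma$ to a double cover branched at an interior point, where the two local branches at $x$ separate into two distinct arcs; applying case (1) upstairs and pushing back down yields the identity in $\Aprin(B_T)$. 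In each reduction, the principal-coefficient monomials $Y_1, Y_2$ transform consistently because shear coordinates are additive under concatenation and well-behaved under finite covers.

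The main obstacle, I expect, is pinning down the signs $\pm$ in front of $Y_1$ and $Y_2$, together with the precise exponent vectors. Lambda lengths of generalized (immersed) arcs are sensitive to isotopy through a contractible kink, so different choices of cutting point or of lift in the cover could \emph{a priori} introduce a sign discrepancy that must be tracked. A careful case-by-case inspection of the local orientations in Figures \ref{skeinfigure}--\ref{skein5figure}, combined with the shear-coordinate computation of Definition \ref{def:shear} applied to $L_{\tau_i} \cap D$ for each elementary lamination, should then determine both the signs and the monomials $Y_1, Y_2$ unambiguously, completing the proof.
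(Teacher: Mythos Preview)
This theorem is not proved in the present paper at all: it is quoted verbatim from \cite[Propositions 6.4, 6.5, 6.6]{MW}, and the paper uses it as a black box. So there is no ``paper's own proof'' to compare against; the relevant comparison is with the argument in \cite{MW}, which proceeds via explicit $2\times 2$ matrix formulae for $x_\gamma$ and $x_\zeta$ (products of elementary matrices along the sequence of arcs crossed) and reduces the skein relation to a matrix identity. That approach is quite different from what you sketch.

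Your outline has genuine gaps that would prevent it from going through as written. First, the lambda-length framework of \cite{FT} and Proposition~\ref{Ptolemy} applies to \emph{arcs} in $(S,M)$, i.e.\ curves with endpoints in $M$; there is no lambda length attached to a closed loop, so you cannot ``interpret each $x_C$ as a lambda length'' when $C$ contains loops. In this paper $x_\zeta$ is \emph{defined} combinatorially via good matchings of a band graph (Definition~\ref{def closed loop}), not via hyperbolic geometry. Second, Proposition~\ref{Ptolemy} concerns two diagonals of an honest quadrilateral whose four sides are arcs or boundary segments of $(S,M)$. In the local picture at a crossing of two generalized arcs, the four ``sides'' are not arcs with endpoints in $M$; they are half-arcs ending at an interior point, so the Ptolemy relation does not literally apply. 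Third, the reductions you propose for loops and self-crossings---cutting a loop open at a regular point, or lifting to a branched double cover---change the surface $(S,M)$ and hence the cluster algebra $\Aprin(B_T)$ itself; you would need a precise comparison map between the two cluster algebras that carries the relevant elements and coefficient monomials to each other, and no such map is set up here. The phrase ``shear coordinates are well-behaved under finite covers'' is doing a lot of unjustified work.

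If you want a strategy closer to what actually works, look at \cite{MW}: one writes $x_\gamma$ (resp.\ $x_\zeta$) as an entry (resp.\ trace) of a product of matrices in $SL_2$ indexed by the crossings of $\gamma$ with $T$, and then the skein relation becomes the elementary identity expressing a product of two such matrix quantities as a sum of two others, with the $Y_i$ read off from how the elementary laminations meet the local picture.
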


\subsection{Chebyshev polynomials}\label{sect cheby}
{Chebyshev polynomials} will play an important role in the proof of our main result. In this section, we recall some basic facts.  
\begin{Def}
Let $T_k$ denote the $k$-th normalized Chebyshev polynomial with coefficients defined by \[T_k(t+\frac{Y}{t}) = t^k + \frac{Y^k}{t^k}.\]\end{Def}

\begin{prop} \label{chebyrec}
The normalized Chebyshev polynomials $T_k(x)$ defined above can also be uniquely determined by the initial conditions $T_0(x) = 2$, $T_1(x) = x$ and the recurrence $$T_k(x) = xT_{k-1}(x) - Y T_{k-2}(x).$$  If $Y$ is set to be $1$, then the $T_k(x)$'s can also be written as $2Cheb_k(x/2)$ where $Cheb_k(x)$ denotes the usual Chebyshev polynomial of the first kind, which satisfies 
$Cheb_k(\cos x) = \cos (kx)$. 
\end{prop}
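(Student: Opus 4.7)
The plan is to verify the recurrence directly from the defining identity $T_k(t + Y/t) = t^k + Y^k/t^k$, then note that a second-order linear recurrence with specified initial data determines a unique sequence of polynomials, and finally handle the $Y=1$ trigonometric identity by a substitution.

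First I would check the initial conditions: setting $k=0$ in the defining identity gives $T_0(t+Y/t) = 1 + 1 = 2$, so $T_0(x) = 2$; and $k=1$ gives $T_1(t+Y/t) = t + Y/t$, so $T_1(x) = x$. For the recurrence, I would compute
\begin{equation*}
\left(t + \frac{Y}{t}\right)\!\left(t^{k-1} + \frac{Y^{k-1}}{t^{k-1}}\right) = t^k + \frac{Y^k}{t^k} + Y\!\left(t^{k-2} + \frac{Y^{k-2}}{t^{k-2}}\right),
\end{equation*}
which, by the definition of $T_k$, translates into
\begin{equation*}
(t+Y/t)\, T_{k-1}(t+Y/t) = T_k(t+Y/t) + Y\, T_{k-2}(t+Y/t).
\end{equation*}
Since $t + Y/t$ takes infinitely many values as $t$ varies (we can view this as an identity of rational functions in $t$, or equivalently as a polynomial identity in the single variable $x = t+Y/t$), this yields $T_k(x) = x\,T_{k-1}(x) - Y\,T_{k-2}(x)$. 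Uniqueness of the $T_k$ is immediate, because any pair of sequences satisfying the same second-order linear recurrence and agreeing on the first two terms must coincide.

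For the last assertion, set $Y = 1$ and substitute $t = e^{i\theta}$, so that $t + 1/t = 2\cos\theta$ and $t^k + 1/t^k = 2\cos(k\theta)$. The defining identity becomes $T_k(2\cos\theta) = 2\cos(k\theta) = 2\,\mathrm{Cheb}_k(\cos\theta)$, and replacing $\cos\theta$ by $x/2$ gives $T_k(x) = 2\,\mathrm{Cheb}_k(x/2)$, as claimed.

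None of the steps here presents a real obstacle; the statement is essentially a standard fact about Chebyshev polynomials, and the only mild subtlety is the passage from the identity at the points $x = t + Y/t$ to an identity of polynomials in $x$. This is handled by observing that $t \mapsto t+Y/t$ is surjective (over an algebraically closed field of characteristic zero, or simply onto an infinite subset of the reals for generic $Y>0$), so two polynomials that agree on its image agree as polynomials.
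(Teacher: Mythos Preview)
Your proof is correct and follows essentially the same approach as the paper: both verify the initial conditions directly from the defining identity, and both derive the recurrence by expanding $(t+Y/t)(t^{k-1}+Y^{k-1}/t^{k-1})$. The one minor difference is in the final assertion: the paper shows $T_k(x)=2\,\mathrm{Cheb}_k(x/2)$ by checking that $2\,\mathrm{Cheb}_k(x/2)$ satisfies the same recurrence and initial conditions as $T_k(x)$ (when $Y=1$), whereas you use the substitution $t=e^{i\theta}$ and the defining property $\mathrm{Cheb}_k(\cos\theta)=\cos(k\theta)$ directly. Both arguments are equally short and valid.
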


\begin{proof}
It is easy to check that the unique one-parameter family of polynomials $T_k(x)$ defined by the property $T_k\left(t + \frac{Y}{t}\right) = t^k + \frac{Y^k}{t^k}$ satisfies the initial conditions $T_0(x) = 2$ and $T_1(x)=x$.  To see that this family also satisfies the desired recurrence,  we note that 
$$\left(t+ \frac{Y}{t}\right)\left(t^{k-1} + \frac{Y^{k-1}}{t^{k-1}}\right) = t^k + Y t^{k-2} + \frac{Y^{k-1}}{t^{k-2}} + \frac{Y^k}{t^k},$$ and thus letting $x=t + \frac{Y}{t}$, we obtain $$x T_{k-1}(x) = T_k(x) + Y T_{k-2}(x).$$  Since the usual Chebyshev polynomials satisfy the initial conditions $Cheb_0(x) = 1$, $Cheb_1(x) = x$, and the recurrence $$Cheb_k(x) = 2x Cheb_{k-1}(x) - Cheb_{k-2}(x),$$ the last remark follows as well.
\end{proof}

\begin{table}
\begin{eqnarray*}
T_0(x) &=& 2 \\ 
T_1(x) &=& x \\
T_2(x) &=& x^2-2Y \\
T_3(x) &=& x^3 - 3xY \\
T_4(x) &=& x^4 - 4x^2Y + 2Y^2 \\
T_5(x) &=& x^5 - 5x^3Y + 5xY^2 \\
T_6(x) &=& x^6 - 6x^4Y + 9x^2Y^2 - 2Y^3
\end{eqnarray*}
\caption{The normalized Chebyshev polynomials (with coefficients) $T_k(x)$ for small $k$.}
\end{table}

We record here one more   property of the normalized Chebyshev polynomials that we will need later.
\begin{prop} 
\label{prop cheb} For all $k\geq 1$, the monomial $x^k$ can be written as a positive linear combination of the normalized Chebyshev polynomials $T_k = T_k(x)$.  In particular,

{\scriptsize
\begin{eqnarray} 
\label{chebodd}  x^k &=& T_k + {k \choose 1} Y T_{k-2} 
+ \dots + {k \choose (k-1)/2 } Y^{(k-2)/2} T_1 \mathrm{~if~}k\mathrm{~is~odd,~and~} \\
\label{chebeven} x^k &=& T_k + {k \choose 1} Y T_{k-2} 
+ \dots + {k \choose (k-2)/2} Y^{(k-2)/2} T_2 + {k \choose k/2}Y^{k/2} \mathrm{~if~}k\mathrm{~is~even}.
\end{eqnarray}
}
\end{prop}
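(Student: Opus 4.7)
The plan is to use the defining property $T_k(t + Y/t) = t^k + Y^k/t^k$ together with the binomial theorem, which immediately yields both the existence of the expansion and the explicit binomial coefficients. Formally substitute $x = t + Y/t$ (treating $t$ as an indeterminate and $Y$ as a constant). Then the binomial theorem gives
\[
x^k \;=\; \Bigl(t + \tfrac{Y}{t}\Bigr)^k \;=\; \sum_{j=0}^{k} \binom{k}{j}\, Y^{j}\, t^{\,k-2j}.
\]
Since polynomial identities in $x$ are determined by their values at the infinitely many $x$ of the form $t + Y/t$, it suffices to verify the claimed identity after this substitution.

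The key step is to pair the summand indexed by $j$ with the summand indexed by $k-j$. Using $\binom{k}{j} = \binom{k}{k-j}$ and $Y^{k-j} = Y^{j} \cdot Y^{k-2j}$, their sum is
\[
\binom{k}{j}\, Y^{j}\!\left(t^{\,k-2j} + \frac{Y^{k-2j}}{t^{\,k-2j}}\right) \;=\; \binom{k}{j}\, Y^{j}\, T_{k-2j}(x),
\]
by the defining property of $T_{k-2j}$. When $k$ is odd, every index $j \in \{0,1,\dots,k\}$ pairs with a distinct partner, so grouping them in pairs $\{j, k-j\}$ for $j = 0, 1, \dots, (k-1)/2$ produces exactly the expansion
\[
x^{k} \;=\; \sum_{j=0}^{(k-1)/2} \binom{k}{j}\, Y^{j}\, T_{k-2j}(x),
\]
which matches the stated odd-case formula (the last term being $\binom{k}{(k-1)/2}\, Y^{(k-1)/2}\, T_{1}(x)$). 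When $k$ is even, the index $j = k/2$ is unpaired and contributes the constant $\binom{k}{k/2}\, Y^{k/2}$; pairing the remaining indices exactly as before gives the even-case formula.

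Positivity is transparent: every coefficient appearing in the expansion is a binomial coefficient times a nonnegative power of $Y$, both of which are manifestly nonnegative. There is really no obstacle to this argument; the only subtlety is the formal substitution $x = t + Y/t$, which is justified because polynomial identities in one variable can be checked on any infinite set. Alternatively, one could give an inductive proof using the recurrence $T_k(x) = x T_{k-1}(x) - Y T_{k-2}(x)$ of Proposition \ref{chebyrec} together with Pascal's identity $\binom{k-1}{j} + \binom{k-1}{j-1} = \binom{k}{j}$, but the substitution argument is cleaner and has the advantage of producing the binomial coefficients directly rather than verifying them a posteriori.
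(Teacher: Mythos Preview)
Your proof is correct and takes a genuinely different route from the paper. The paper argues by induction on $k$: it writes $x^k = x \cdot x^{k-1}$, applies the induction hypothesis to $x^{k-1}$, uses the rewritten recurrence $xT_{k-1} = T_k + Y T_{k-2}$ from Proposition~\ref{chebyrec} to distribute the factor of $x$, and then collects terms using Pascal's identity $\binom{k-1}{j} + \binom{k-1}{j-1} = \binom{k}{j}$. Your argument instead exploits the defining relation $T_m(t+Y/t) = t^m + Y^m/t^m$ directly: expanding $(t+Y/t)^k$ by the binomial theorem and pairing the $j$th and $(k-j)$th summands immediately produces the $T_{k-2j}$ with the correct binomial coefficient. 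This is shorter and has the advantage you note, that the binomial coefficients appear naturally rather than being verified after the fact. The paper's inductive approach, on the other hand, stays entirely within the polynomial ring in $x$ and avoids the auxiliary variable $t$; your one-line justification that the substitution $x \mapsto t + Y/t$ detects polynomial identities is fine, though if you wanted to be fully rigorous you could simply observe that this map $\mathbb{Z}[Y][x] \to \mathbb{Z}[Y][t,t^{-1}]$ is injective because $x^n \mapsto t^n + (\text{lower powers of } t)$. Incidentally, your computation also silently corrects a typo in the displayed formula: the exponent on $Y$ in the final term of the odd case should be $(k-1)/2$, as your argument shows, not $(k-2)/2$.
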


\begin{proof}
We prove both of these identities together by induction on $k$.  The base cases for $k=1$ or $2$ are easy to verify.  If $k \geq 3$ is odd, then by induction and equation (\ref{chebeven}), we obtain  
{\scriptsize
$$x^k = x(x^{k-1}) = x\left[ 
T_{k-1} + {k-1 \choose 1} Y T_{k-3} 
+ \dots + {k-1 \choose (k-3)/2} Y^{(k-3)/2}T_2 + {k -1\choose (k-1)/2} Y^{(k-1)/2}
\right].$$
}
The Chebyshev recurrence can be rewritten as $xT_{k-1} = T_k + Y T_{k-2}$.  Thus $x^k$ equals 
{\scriptsize
\begin{eqnarray*} 
&& \left[ 
T_{k} + {k-1 \choose 1} Y T_{k-2} 
+ {k-1 \choose 2} Y^2 T_{k-4} 
+ \dots + {k-1 \choose (k-3)/2} Y^{(k-3)/2} T_3\right] + {k -1\choose (k-1)/2} Y^{(k-1)/2} x
 \\
&+& Y
\left[T_{k-2} + {k-1 \choose 1} Y T_{k-4} 
+ {k-1 \choose 2} Y^2 T_{k-6} 
+ \dots + {k-1 \choose (k-3)/2} Y^{(k-3)/2}T_1
\right] \\ 
&=& 
T_{k} + {k \choose 1} Y T_{k-2} 
+ {k \choose 2} Y^2 T_{k-4} 
+ \dots + {k \choose (k-3)/2} Y^{(k-3)/2} T_3 + {k \choose (k-1)/2} Y^{(k-1)/2} T_1,
\end{eqnarray*}
}
where the last equality uses the fact that $x= T_1$.

A similar technique proves the identity for the case when $k$ is even, where we need to use the facts that $T_0 = 2$ and $2{ k-1 \choose (k-2)/2} = {k \choose k/2}$.  Using these and (\ref{chebodd}), the monomial $x^k = x(x^{k-1})$ equals
{\scriptsize
\begin{eqnarray*} 
&& \left[ 
T_{k} + {k-1 \choose 1} Y T_{k-2} 
+ {k-1 \choose 2} Y^2 T_{k-4} 
+ \dots + {k-1 \choose (k-4)/2} Y^{(k-4)/2} T_4 + {k -1\choose (k-2)/2} Y^{(k-2)/2} T_2\right]
 \\
&+& 
Y \left[T_{k-2} + {k-1 \choose 1} Y T_{k-4} 
+ {k-1 \choose 2} Y^2 T_{k-6} 
+ \dots + {k-1 \choose (k-4)/2} Y^{(k-4)/2} T_2 + {k-1 \choose (k-2)/2} Y^{(k-2)/2} T_0
\right] \\ 
&=& 
T_{k} + {k \choose 1} Y T_{k-2} 
+ {k \choose 2} Y^2 T_{k-4} 
+ \dots + {k \choose (k-2)/2} Y^{(k-2)/2} T_2 + {k \choose k/2} Y^{k/2}.
\end{eqnarray*}
} 

\end{proof}


\section{Definition of the two bases $\B^\circ$ and $\B$}
\label{sec two bases}

Throughout Sections \ref{sec two bases}--\ref{full-rank} of this paper, we fix an unpunctured marked surface $(S,M)$ and a triangulation $T$,
and consider the corresponding cluster algebra 
$\A=\Aprin(B_T)$,
with principal coefficients with respect to $T$.
Recall that the cluster variables of $\A$ are in bijection with 
the arcs in $(S,M)$.  In this paper we will associate elements
of $\A$ to any \emph{generalized}   arc
(where self-intersections are allowed) and to any closed loop.
In particular, we will define two sets
$\C^{\circ}(S,M)$ and $\C(S,M)$ of collections of
loops and   arcs
in $(S,M)$, and will associate a cluster algebra element to each element of 
$\C^{\circ}(S,M)$ and $\C(S,M)$.

\subsection{Snake graphs and band graphs}\label{sect tiles}\label{sect graph}

Recall from \cite{MSW} that we have a positive combinatorial formula for the Laurent expansion of any cluster variable in a cluster algebra arising from a surface.  Each such cluster variable corresponds to an   arc in the surface, thus our formula associates a cluster algebra element to every   arc. We will generalize this construction and associate cluster algebra elements to \emph{generalized}   arcs,  as well as to closed loops (with or without selfcrossings).

 Let 
$\zg$ be an   arc in $(S,M)$ which is not in $T$. 
Choose an orientation on $\zg$, let $s\in M$ be its starting point, and let $t\in M$ be its endpoint.
We denote by
$s=p_0, p_1, p_2, \ldots, p_{d+1}=t$
the points of intersection of $\zg$ and $T$ in order.  
Let $\tau_{i_j}$ be the arc of $T$ containing $p_j$, and let 
$\zD_{j-1}$ and 
$\zD_{j}$ be the two   triangles in $T$ 
on either side of 
$\tau_{i_j}$. Note that each of these triangles has three distinct sides, but not necessarily three distinct vertices, see Figure \ref{figr1}.
\begin{figure}
\includegraphics{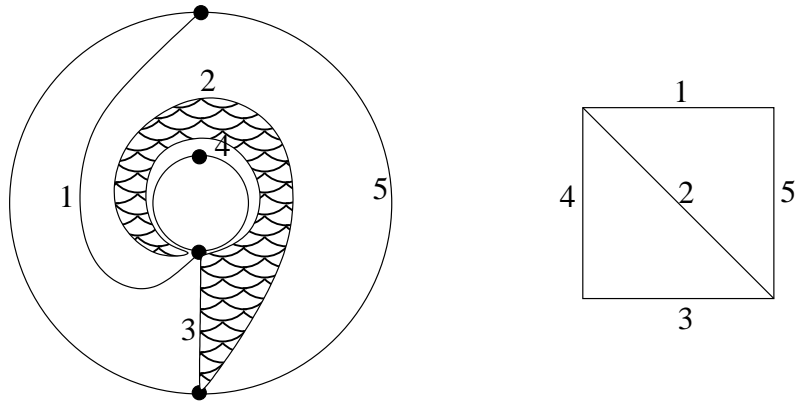}
\caption{On the left, a triangle with two vertices; on the right the tile $G_{j}$ where $i_j=2$. }\label{figr1}
\end{figure}
Let $G_j$ be the graph with 4 vertices and 5 edges, having the shape of a square with a diagonal, such that there is a bijection between the edges of $G_j$ and the 5 arcs in the two triangles $\zD_{j-1}$ and $\zD_j$, which preserves the signed adjacency of the arcs up to sign and such that the diagonal in $G_j$ corresponds to the arc $\tau_{i_j}$ containing the crossing point $p_j$. We call the graph $G_j$ a \emph{tile}.
Thus the tile $G_j$ is given by the quadrilateral in the triangulation $T$ whose diagonal is $\tau_{i_j}$. 

\begin{definition} \label{relorient}  
Given a planar embedding $\tilde G_j$ 
of a  tile $G_j$, we define the \emph{relative orientation} 
$\mathrm{rel}(\tilde G_j, T)$ 
of $\tilde G_j$ with respect to $T$ 
to be $\pm 1$, based on whether its triangles agree or disagree in orientation with those of $T$.  
\end{definition}
For example, in Figure \ref{figr1}, the tile $G_j$ has relative orientation $+1$.

Using the notation above, 
the arcs $\tau_{i_j}$ and $\tau_{i_{j+1}}$ form two edges of a triangle $\zD_j$ in $T$.  Define $\tau_{[\zg_j]}$ to be the third arc in this triangle.

We now recursively glue together the tiles $G_1,\dots,G_d$
in order from $1$ to $d$, so that for two adjacent   tiles, 
 we glue  $G_{j+1}$ to $\tilde G_j$ along the edge 
labeled $\tau_{[\zg_j]}$, choosing a planar embedding $\tilde G_{j+1}$ for $G_{j+1}$
so that $\mathrm{rel}(\tilde G_{j+1},T) \not= \mathrm{rel}(\tilde G_j,T).$  See Figure \ref{figglue}.

\begin{figure}
\input{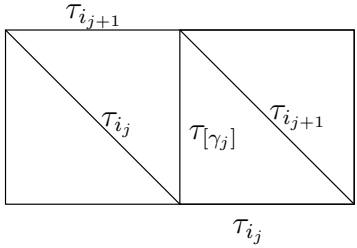}
\caption{Gluing tiles $\tilde G_j$ and $\tilde G_{j+1}$ along the edge labeled  $\tau_{[\zg_j]}$}
\label{figglue}
\end{figure}

After gluing together the $d$ tiles, we obtain a graph (embedded in 
the plane),
which we denote  by
$\overline{G}_{\zg}$. 
\begin{definition}
The \emph{snake graph} $G_{\gamma}$ associated to $\gamma$ is obtained 
from $\overline{G}_{\zg}$ by removing the diagonal in each tile.
\end{definition}

In Figure \ref{figsnake}, we give an example of an 
 arc $\gamma$ and the corresponding snake graph 
${G}_{\zg}$. Since $\gamma$ intersects $T$
five times,  
${G}_{\zg}$ has five tiles. 

\begin{figure}
\input{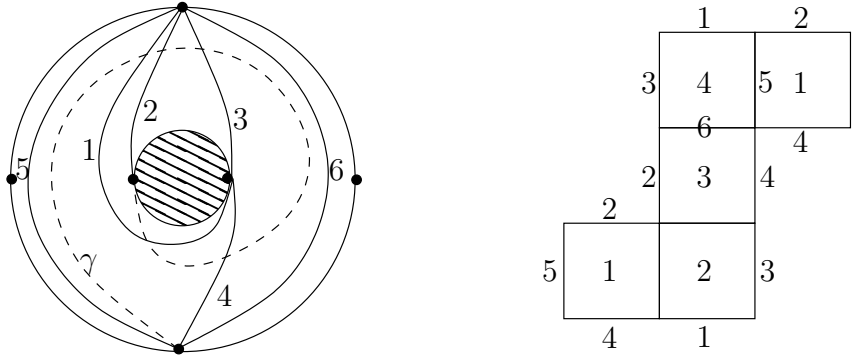}
\caption{An arc $\gamma$ in a triangulated annulus on the left and the corresponding snake graph $G_{\gamma}$ on the right. The tiles labeled 1,3,1 have positive relative orientation and the tiles 2,4 have negative relative orientation.}\label{figsnake}
\end{figure}

\begin{remark}
Even if $\gamma$ is a generalized arc, thus allowing self-crossings, we still can define 
  ${G}_{\gamma}$ in the same way.  
\end{remark}

Now we associate a similar graph to closed loops.
Let $\zeta$ be a closed loop in $(S,M)$, which may or may not have self-intersections, 
but which is not contractible and has no contractible kinks.  Choose
an orientation for $\zeta$, and a 
triangle $\Delta$ which is crossed by $\gamma$.  Let $p$ be a point in the interior of $\Delta$ which lies on $\gamma$, and let $b$ and $c$ be the two sides of the triangle crossed by $\gamma$
immediately before and following its travel through point $p$.
Let $a$ be the third side of $\Delta$.   
We let $\tilde{\gamma}$ denote the arc from $p$ back to itself that exactly
follows closed loop $\gamma$.  See the left of Figure \ref{fig band}.

\begin{figure}
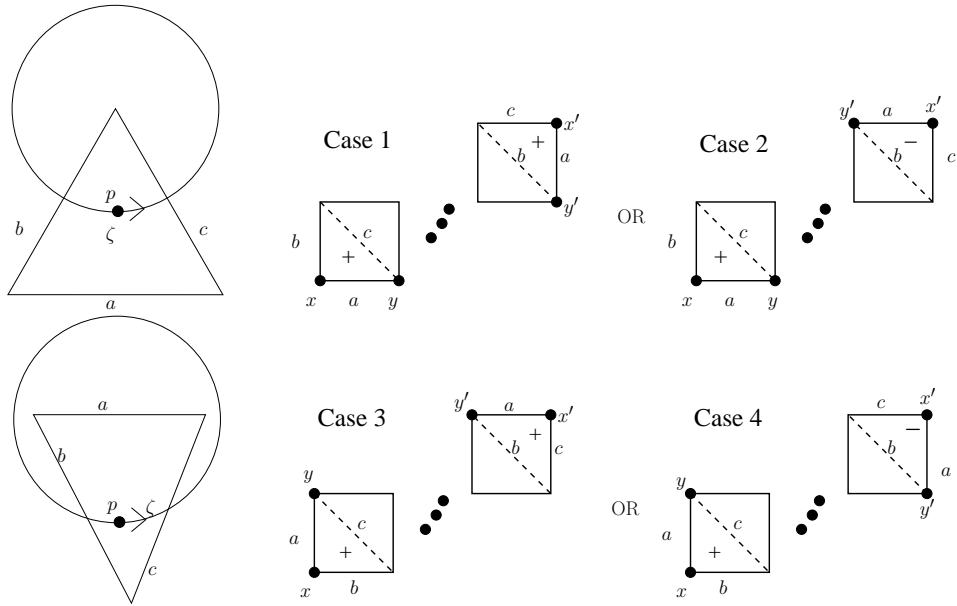

\scalebox{0.2}
{
\input{BandTriag.pstex_t}
\hspace{8em}
}
\scalebox{0.3}
{
\input{BandGs.pstex_t}
}
\scalebox{0.2}
{
\input{BandTriag2.pstex_t}
\hspace{8em}
}
\scalebox{0.3}
{
\input{BandGs2.pstex_t}
}
\caption{(Left): Triangle containing $p$ along closed loop $\zeta$. \hspace{10em}
(Right): Corresponding Band graph (with $x\sim x'$, $y\sim y'$) depending on whether $\gamma$ crosses an odd or even number of arcs. The $+$'s and $-$'s denote relative orientation of each tile}
\label{fig band}
\end{figure}

We start by building the snake graph $G_{\tilde{\gamma}}$ 
as defined above.  In the first tile of $G_{\tilde{\gamma}}$,  
let $x$ denote the vertex
at the corner of the edge labeled $a$ and the edge labeled $b$, and let $y$ denote the vertex at the other end of the edge labeled $a$.  Similarly, in the last tile of
$G_{\tilde{\gamma}}$,  
let $x'$ denote the vertex at the corner of the edge labeled
$a$ and the edge labeled $c$, and let $y'$ denote the vertex at the other
end of the edge labeled $a$.  See the right of Figure \ref{fig band}.

\begin{definition} \label{def band} The \emph{band graph}  $\widetilde{G}_{ \zeta}$ associated to the loop $\zeta$
is the graph obtained from $G_{\tilde{\zeta}}$ by identifying the edges labeled $a$ in the first and last tiles so that the vertices $x$ and $x'$ and the vertices $y$ and $y'$ are glued together.  We refer to the two vertices obtained by identification
as $x$ and $y$, and to the edge obtained by identification as the \emph{cut edge}.
The resulting graph lies on an annulus or a M\"obius strip.  
\end{definition}

\subsection{Laurent polynomials associated to generalized   arcs and closed loops}\label{sect def loops}

Recall that if $\tau$ is a boundary segment then $x_{\tau} = 1$,

\begin{definition}  
If $\zg$ is a generalized  arc or a closed loop and  $\tau_{i_1}, \tau_{i_2},\dots, \tau_{i_d}$
is the sequence of arcs in $T$ which $\zg$ crosses, we define the \emph{crossing monomial} 
of $\gamma$ with respect to $T$ to be
$$\mathrm{cross}(T, \gamma) = \prod_{j=1}^d x_{\tau_{i_j}}.$$
\end{definition}

\begin{definition} 
A \emph{perfect matching} of a graph $G$ is a subset $P$ of the 
edges of $G$ such that
each vertex of $G$ is incident to exactly one edge of $P$. 
If $G$ is a snake graph or a band graph, and the edges of  a perfect matching $P$ of  
$G $ are labeled $\tau_{j_1},\dots,\tau_{j_r}$, then 
we define the {\it weight} $x(P)$ of $P$ to be 
$x_{\tau_{j_1}} \dots x_{\tau_{j_r}}$.
\end{definition}

\begin{definition}  Let $\zg$ be a generalized arc.
It is easy to see that the snake graph
$G_{\zg}$  
has  precisely two perfect matchings which we call
the {\it minimal matching} $P_-=P_-(G_{\zg})$ and 
the {\it maximal matching} $P_+
=P_+(G_{\zg})$, 
which contain only boundary edges.
To distinguish them, 
if  $\mathrm{rel}(\tilde G_1,T)=1$ (respectively, $-1$),
we define 
$e_1$ and $e_2$ to be the two edges of 
$\overline{G}_{\zg}$ which lie in the counterclockwise 
(respectively, clockwise) direction from 
the diagonal of $\tilde G_1$.  Then  $P_-$ is defined as
the unique matching which contains only boundary 
edges and does not contain edges $e_1$ or $e_2$.  $P_+$
is the other matching with only boundary edges.
\end{definition}
In the example of Figure \ref{figsnake}, the minimal matching $P_-$ contains the bottom edge of the first tile labeled 4.

\begin{definition} 
\label{def good matching} Let $\zeta $ be a closed loop. 
A  perfect matching $P$ of the band graph $\widetilde{G}_\zeta$ is called a \emph{good matching} 
if either $x$ and $y$ are matched to each other ($P(x)=y$ and $P(y)=x$) or if both 
edges $(x,P(x))$ and $(y,P(y))$ lie on one side of the cut edge.
\end{definition}

\begin{remark}\label{descend}
Let $\widetilde{G}_\zeta$ be a band graph obtained by identifying two edges
of the snake graph $G_{\tilde\zeta}$.  The good matchings of
$\widetilde{G}_\zeta$ can be identified with a subset of the perfect matchings
of $G_{\tilde\zeta}$.  Let $\widetilde{P}$ be a good matching of $\widetilde{G}_\zeta$.
Thinking of $\widetilde{P}$ as a subset of edges of $G_{\tilde\zeta}$, then
by definition of good we can add to it either the edge $(x,y)$ or
the edge $(x',y')$ to get a perfect matching $P$ of $G_{\tilde\zeta}$.  In this case,
we say that the perfect matching $P$ of $G_{\tilde\zeta}$ \emph{descends} to a
good matching $\widetilde{P}$ of $\widetilde{G}_\zeta$.  In particular,
the minimal matching $P_-$ of $G_{\tilde\zeta}$ descends to a good matching of
$\widetilde{G}_\zeta$, which we also call \emph{minimal}.
(To see this, just consider the cases of whether $G_{\tilde\zeta}$ has an odd or
even number of tiles, and observe that the minimal matching
of $G_{\tilde\zeta}$ always uses one of the edges $(x,y)$ and $(x',y')$.)
\end{remark}

For an arbitrary perfect matching $P$ of a snake graph $G_{\gamma}$, we let
$P_-\ominus P$ denote the symmetric difference, defined as $P_-\ominus P =(P_-\cup P)\setminus (P_-\cap P)$.

\begin{lemma}\cite[Theorem 5.1]{MS}
\label{thm y}
The set $P_-\ominus P$ is the set of boundary edges of a 
(possibly disconnected) subgraph $G_P$ of $G_\zg$, 
which is a union of cycles.  These cycles enclose a set of tiles 
$\cup_{j\in J} G_{j}$,  where $J$ is a finite index set.
\end{lemma}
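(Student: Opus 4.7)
The plan is to reduce the claim to two standard facts: (i) the symmetric difference of two perfect matchings of any graph is a disjoint union of simple even cycles, and (ii) any simple cycle in a planar graph bounds a union of bounded faces. The tiles $G_j$ are, by the gluing construction in Section~\ref{sect tiles}, precisely the bounded faces of the planar embedding of $G_\zg$ fixed there, so combining (i) and (ii) will give the statement.

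First I would verify (i) in our setting. At every vertex $v$ of $G_\zg$, both $P_-$ and $P$ contain exactly one edge incident to $v$. Thus $v$ has degree $0$ in $P_-\ominus P$ if these two edges coincide, and degree $2$ otherwise. Hence $P_-\ominus P$ is a $2$-regular subgraph of $G_\zg$, which in any graph is a disjoint union of simple cycles. Since $G_\zg$ is bipartite (each tile is a $4$-cycle, and the gluing identifies edges consistently with the bipartition coming from the planar chessboard coloring), each such cycle has even length. Call these cycles $C_1,\dots,C_r$.

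Next I would invoke (ii) using the planar embedding $\widetilde{G}_\zg$ built tile by tile in Section~\ref{sect tiles}. By construction, the bounded faces of this embedding are exactly the quadrilateral tiles $G_1,\dots,G_d$ (after removing the diagonals, which is how $G_\zg$ was defined from $\overline{G}_\zg$). Any simple cycle $C_i$ in a plane graph bounds a topological disk whose interior is a union of bounded faces; applied to $C_i\subset G_\zg$, this disk is of the form $\bigcup_{j\in J_i}G_j$ for some index set $J_i\subset\{1,\dots,d\}$. Setting $J=J_1\sqcup\cdots\sqcup J_r$ and $G_P=C_1\cup\cdots\cup C_r$ then realizes $P_-\ominus P$ as the set of boundary edges of $\bigcup_{j\in J}G_j$, as claimed.

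The main obstacle, which is really only a bookkeeping point, is justifying the step that the cycles $C_i$ are pairwise ``nested or disjoint'' in a way that makes the indices $J_i$ well-defined and disjoint. This is automatic from the fact that the $C_i$ are vertex-disjoint simple cycles in a plane graph and that the boundary of each bounded face (tile) is a single $4$-cycle whose edges are shared with at most one other tile; hence each tile lies either entirely inside or entirely outside each $C_i$, and the decomposition $J=\sqcup_i J_i$ makes sense. One can make this rigorous by induction on $r$, peeling off innermost cycles, but since this is the combinatorial content of \cite[Theorem 5.1]{MS} I would simply appeal to that reference rather than redo the planarity argument in detail.
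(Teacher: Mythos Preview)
The paper does not actually prove this lemma; it is stated with a citation to \cite[Theorem~5.1]{MS} and then used as a black box. So there is no in-paper argument to compare against.

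Your argument is correct and is the standard one: the symmetric difference of two perfect matchings has every vertex of degree $0$ or $2$ and is therefore a vertex-disjoint union of simple cycles, and in the fixed planar embedding of $G_\gamma$ the bounded faces are exactly the tiles, so each such cycle bounds a union of tiles.

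The nesting issue you flag at the end is in fact vacuous for snake graphs, and you can dispose of it directly rather than defer to the reference. Because the tiles $G_1,\dots,G_d$ are glued in a path-like sequence (each tile shares an edge with at most two neighbors), every simple cycle in $G_\gamma$ is the boundary of a contiguous block $G_a\cup\cdots\cup G_b$ and passes through \emph{every} vertex of that block lying on its outer boundary. Hence two vertex-disjoint simple cycles must enclose disjoint intervals of tiles, so $J=\bigsqcup_i J_i$ automatically and no induction on $r$ is needed.

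One minor remark: the bipartiteness observation (to get even cycle lengths) is true but irrelevant to the statement of the lemma, so you can omit it.
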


We use this decomposition to define \emph{height monomials} for
perfect matchings.
Note that the exponents in the height monomials defined below coincide
with the definition of height functions given in
\cite{ProppLattice} for perfect matchings of bipartite
graphs, based on earlier work of \cite{ConwayLagarias}, 
\cite{EKLP}, and \cite{Thurston} for domino tilings.

\begin{definition} \label{height} With the notation of Lemma \ref{thm y},
we  define the \emph{height monomial} $y(P)$ of a perfect matching $P$ of a snake graph $G_\gamma$ by
\begin{equation*}
y(P) = \prod_{j\in J} y_{\tau_{i_j}}.
\end{equation*}
The \emph{height monomial $y(\tilde P)$ of a good matching} $\tilde P$ of a band graph $\widetilde{G}_\zeta$ is defined to be the height monomial of the corresponding matching on the snake graph $G_{\tilde\zeta}$.

\end{definition}

For each generalized arc $\gamma$, we now define a Laurent polynomial $x_\zg$, as well as a polynomial 
$F_\zg^T$ obtained from $x_{\gamma}$ by specialization.
\begin{definition} \label{def:matching}
Let $\zg$ 
be a generalized arc and let $G_\zg$,
be its snake graph.  
\begin{enumerate}
\item If $\zg$ 
has a contractible kink, let $\overline{\zg}$ denote the 
corresponding generalized arc with this kink removed, and define 
$x_{\zg} = (-1) x_{\overline{\zg}}$.  
\item Otherwise, define
\[ x_{\gamma}= \frac{1}{\mathrm{cross}(T,\zg)} \sum_P 
x(P) y(P),\]
 where the sum is over all perfect matchings $P$ of $G_{\zg}$.
\end{enumerate}
Define $F_{\gamma}^T$ to be the polynomial obtained from 
$x_{\gamma}$ by specializing all the $x_{\tau_i}$ to $1$.

If $\gamma$ is a curve that 
cuts out a contractible monogon, then we define $x_\gamma =0$.     
\end{definition}

\begin{theorem}\cite[Theorem 4.9]{MSW}
\label{thm MSW}
If $\gamma$ is an arc, then 
$x_{\gamma}$ 
is   the Laurent expansion with respect to the the seed $\Sigma_T$ of the cluster variable in $\A$ corresponding to the arc $\gamma$, and $F_{\gamma}^T$ is its \emph{F-polynomial}.
\end{theorem}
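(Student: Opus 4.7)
The natural plan is to prove the identity by induction on the number of crossings $d = e(\gamma, T)$. The base cases $d=0$ and $d=1$ can be handled directly. When $d=0$, the arc $\gamma$ is (isotopic to) an arc in $T$, the snake graph is degenerate, and $x_\gamma$ reduces to one of the initial cluster variables. When $d=1$, $\gamma$ is the flip of a single arc $\tau\in T$, and $G_\gamma$ is a single tile whose two perfect matchings $P_-$ and $P_+$ reproduce exactly the two terms in the exchange relation of $x_\tau$, with the correct height monomial $y_\tau$ on $P_+$; this matches the seed mutation that produces $x_\gamma$ by Theorem~\ref{clust-surface}.

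For the inductive step $d\geq 2$, I would locate a quadrilateral in $(S,M)$ whose diagonals are $\gamma$ and some arc $\gamma^*$ and whose four sides $\alpha_1,\alpha_2,\alpha_3,\alpha_4$ (generalized arcs or boundary segments) all have strictly fewer than $d$ crossings with $T$. Such a quadrilateral is obtained by choosing an appropriate crossing $p_j$ of $\gamma$ with some $\tau_{i_j}\in T$ and letting its vertices be the endpoints of $\gamma$ together with the endpoints of $\tau_{i_j}$. The generalized Ptolemy relation (Proposition~\ref{Ptolemy}) then gives
\[
x_\gamma\, x_{\gamma^*} \;=\; Y_1\, x_{\alpha_1} x_{\alpha_3} \;+\; Y_2\, x_{\alpha_2} x_{\alpha_4},
\]
with the principal-coefficient monomials $Y_1,Y_2$ recorded by the shear coordinates of the elementary laminations $L_{\tau_i}$ crossing the quadrilateral. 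By the inductive hypothesis, each of $x_{\gamma^*}, x_{\alpha_1},\dots,x_{\alpha_4}$ is computed by the matching formula, so it remains to verify that the matching formula for $x_\gamma$ also satisfies this Ptolemy identity.

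The combinatorial heart of the argument is therefore a weight-preserving bijection
\[
\Match(G_\gamma)\times\Match(G_{\gamma^*}) \;\longleftrightarrow\; \bigl(\Match(G_{\alpha_1})\times\Match(G_{\alpha_3})\bigr)\,\sqcup\,\bigl(\Match(G_{\alpha_2})\times\Match(G_{\alpha_4})\bigr),
\]
where the weight of a matching is the product of its crossing monomial and its height monomial. The bijection acts by swapping matchings on the sub-snake-graphs that $G_\gamma$ and $G_{\gamma^*}$ share with the $G_{\alpha_i}$ (coming from the portions of $\gamma$ and $\gamma^*$ outside the chosen quadrilateral); the $x$-weights are preserved almost by definition because the shared tiles correspond to the same arcs of $T$. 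The main obstacle is tracking the height monomial under this swap: using the cycle decomposition of $P_-\ominus P$ around tiles (Lemma~\ref{thm y}), one must show via a case analysis on the relative orientations (Definition~\ref{relorient}) of the tiles on either side of the ``cut'' at $\tau_{i_j}$ that the difference in height monomials between the two sides of the bijection equals precisely the shear-coordinate contributions $Y_1$ and $Y_2$. This compatibility of the $y$-coordinates with the matching-cycle structure is the most delicate ingredient; the verification of the $x$-weights, by contrast, is bookkeeping on which edges of the glued snake graphs correspond to which arcs of $T$.
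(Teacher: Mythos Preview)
This theorem is not proved in the present paper at all: it is quoted as \cite[Theorem~4.9]{MSW} and used here only as input for the new band-graph constructions and the bases $\B^\circ,\B$. There is therefore no proof in this paper to compare your proposal against.

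For what it is worth, your sketch also does not match the argument actually given in \cite{MSW}. That paper does not induct on $d=e(\gamma,T)$ via a Ptolemy relation in $(S,M)$; instead it passes to an auxiliary triangulated polygon obtained by unfolding the sequence of triangles that $\gamma$ crosses, proves the matching formula there by removing one triangle at a time, and then shows that a suitable specialization of the polygon formula yields the surface formula.

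Your direct approach has a genuine obstruction. With $\gamma^*=\tau_{i_j}\in T$ as the other diagonal, the four ``sides'' $\alpha_1,\dots,\alpha_4$ you produce are in general only \emph{generalized} arcs: if $\gamma$ meets $\tau_{i_j}$ at more than one point, the concatenation of a segment of $\gamma$ with a segment of $\tau_{i_j}$ can self-intersect. Proposition~\ref{Ptolemy} is stated for honest arcs and boundary segments, and, more fatally, the inductive hypothesis ``the matching formula equals the cluster variable'' only makes sense for arcs, since generalized arcs do not label cluster variables. So you cannot apply the inductive hypothesis to the $\alpha_i$. One can try to rescue this by proving a skein identity purely among matching polynomials (this is essentially the content of \cite{MW}), but then the comparison to cluster variables is lost and the induction no longer closes. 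The passage to a polygon in \cite{MSW} is precisely what sidesteps this issue, because in a polygon every side of the quadrilateral is an honest arc.
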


For every closed loop $\zeta$, we now define a Laurent polynomial $x_\zeta$, as well 
as a polynomial 
$F_\zeta^T$ obtained from $x_{\zeta}$ by specialization.
\begin{definition} \label{def closed loop} 
Let $\zeta$  
be a closed loop.  
\begin{enumerate}
\item If $\zeta$ is a contractible loop,
 then let $x_\zeta = -2$.
\item If $\zeta$ 
has a contractible kink, let $\overline{\zeta}$ denote the 
corresponding closed loop with this kink removed, and define 
$x_{\zeta} = (-1) x_{\overline{\zeta}}$.
\item Otherwise, let 
$$x_{\zeta} = \frac{1}{\mathrm{cross}(T,\zg)} \sum_{P} 
x(P) y(P),$$ where the sum is over all good matchings $P$ of the  band graph $\widetilde{G}_{\zeta}$.
\end{enumerate}
Define $F_\zeta^T$ to be the Laurent polynomial obtained from 
$x_\zeta$ by specializing all the $x_{\tau_i}$ to $1$.
\end{definition}

\begin{remark}\label{remnot}
Note that $x_{\gamma}$ depends on the triangulation $T$ and the surface $(S,M)$, and lies in
     (the fraction field of) $\Aprin(B_T).$  If we want to emphasize the dependence on $T$, we
     will use the notation $X_{\gamma}^T$ instead of $x_{\gamma}$. Similarly for  $X_{\zeta}^T$ and $x_{\zeta}$.
     \end{remark}

\subsection{Bangles and bracelets}\label{sec:bangbrac}

\begin{definition}
Let $\zeta$ be an essential loop in $(S,M)$.  
We define the 
\emph{bangle} $\Bang_k \zeta$ to be the union of $k$ loops isotopic to $\zeta$.
(Note that $\Bang_k \zeta$ has no self-crossings.)  And we define the 
\emph{bracelet} $\Brac_k \zeta$ to be the closed loop obtained by 
concatenating $\zeta$ exactly $k$ times, see Figure \ref{figbangbrac}.  (Note that it will have $k-1$ self-crossings.)
\end{definition}

\begin{figure}
\includegraphics{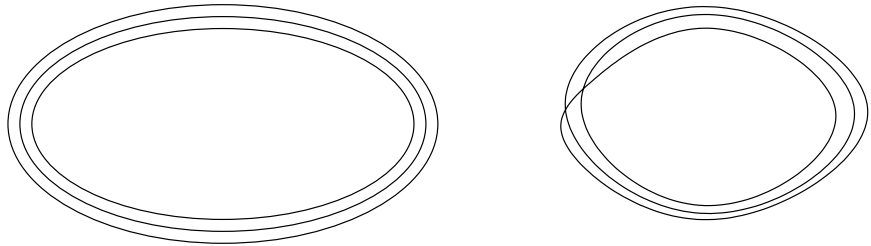}
\caption{A bangle $\Bang_3 \zeta$, on the left, and a bracelet $\Brac_3 \zeta$, on the right.}\label{figbangbrac}
\end{figure}
Note that $\Bang_1 \zeta = \Brac_1 \zeta = \zeta$.

\begin{definition}
\label{def C0-compatible}
A collection $C$ of arcs and essential loops is called 
\emph{$\C^{\circ}$-compatible} if no two elements of $C$ cross each other.
We define $\C^{\circ}(S,M)$  to be the set of all 
$\C^{\circ}$-compatible collections in $(S,M)$.
\end{definition}

\begin{definition}
A collection $C$ of arcs and bracelets is called 
\emph{$\C$-compatible} if:
\begin{itemize}
\item no two elements of $C$ cross each other except for the self-crossings of a bracelet; and
\item given an essential loop $\zeta$ in $(S,M)$, 
there is at most one $k\ge 1$ such
that the $k$-th bracelet $\Brac_k\zeta$ lies in $C$, and, moreover, there is at
most one copy of this bracelet $\Brac_k\zeta$ in $C$.
\end{itemize}
We define $\C(S,M)$ to be the set of all $\C$-compatible
collections in $(S,M)$.
\end{definition}

Note that a $\C^{\circ}$-compatible collection may contain 
bangles $\Bang_k \zeta$ for $k \geq 1$, but it will not contain
bracelets $\Brac_k \zeta$ except when $k=1$.
And  a $\C$-compatible collection may contain bracelets, but will never
contain a bangle $\Bang_k \zeta$ except when $k=1$.

\begin{definition}
Given an arc or closed loop $c$, let 
$x_c$ denote the corresponding Laurent polynomial
defined in Section \ref{sect def loops}.
We define $\B^\circ$ 
to be the set of all cluster algebra 
elements in $\A = \Aprin(B_T)$ corresponding to the set $C^{\circ}(S,M)$, 
\[\B^{\circ} = \left\{\prod_{c\in C} x_c \ \vert \ C \in \C^{\circ}(S,M) \right\}.\]
Similarly, we define
\[\B = \left\{\prod_{c\in C} x_c \ \vert \ C \in \C(S,M) \right\}.\]
\end{definition}

\begin{remark}
Both $\B^{\circ} $ and $\B $ contain
the cluster monomials of $\A $.
\end{remark}

\begin{remark} The notation $\C^{\circ}$ 
is meant to remind the reader that this collection includes bangles.
We chose to use the unadorned notation $\C$ for the other collection
of arcs and loops, because the corresponding set $\B$ of cluster algebra
elements is believed to have better positivity properties than
the set $\B^{\circ}$.
\end{remark}


\section{Proofs of the main result}
\label{sec:main}

The goal of this section is to prove that both sets $\B^\circ$ and $\B$ are bases for the cluster algebra $\A$. 
More specifically, we will prove the following.
\begin{theorem}\label{thm main} If the surface has no punctures and at least two marked points then
the sets $\B^\circ$ and $\B$ are bases of the cluster algebra $\A$.                             
\end{theorem}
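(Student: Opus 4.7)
The plan is to verify for each of $\B^{\circ}$ and $\B$ the three defining properties of a basis: containment in $\A$, spanning of $\A$, and linear independence. Containment for ordinary arcs is immediate from Theorem \ref{thm MSW}. For an essential loop $\zeta$, and more generally a bracelet $\Brac_k\zeta$, I would fix an arc of $T$ which the loop crosses (respectively a self-crossing of $\Brac_k\zeta$) and apply the skein relation of Theorem \ref{th:skein1} at that crossing. Each resolution strictly decreases either the number of self-crossings of the closed loop or its number of crossings with $T$, so an induction on this complexity expresses $x_\zeta$ and $x_{\Brac_k\zeta}$ as $\Z\P$-linear combinations of products of $x_\gamma$'s for ordinary arcs; hence $\B^{\circ}, \B \subseteq \A$.

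For spanning, it suffices to rewrite any product of cluster variables as a $\Z\P$-linear combination of elements of $\B^{\circ}$ (respectively $\B$), because cluster monomials generate $\A$. The tool is again Theorem \ref{th:skein1}: whenever two factors $x_{\gamma_1}, x_{\gamma_2}$ correspond to crossing curves, the skein relation replaces the product by two products whose underlying multicurves have strictly fewer crossings, up to a monomial in the $y$-variables. A well-founded induction on total crossing count yields an expression indexed by $\C^{\circ}$-compatible collections, establishing that $\B^{\circ}$ spans. To pass from $\B^{\circ}$ to $\B$, the new ingredient is Proposition \ref{prop cheb}: each bangle $(x_\zeta)^k$ remaining after the reduction expands positively into $\{T_j(x_\zeta)\}_{j\le k}$, and a direct computation on the band graph $\widetilde G_{\Brac_k\zeta}$ shows $T_k(x_\zeta)=x_{\Brac_k\zeta}$; inverting this triangular change of basis converts $\B^{\circ}$-expansions into $\B$-expansions.

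For linear independence I would follow the strategy signalled by Theorem \ref{th:bijection} together with Proposition \ref{suffice}. The minimal matching $P_-$ of the snake or band graph $G_c$ attached to a single arc or loop $c$ contributes a distinguished leading Laurent monomial $\x^{g(c)}$ to $x_c$, with height $y(P_-)=1$, and the matching formula shows that every other matching contributes $\x^{g(c)}$ times a product of $y_j$'s. Multiplying over a $\C^{\circ}$- or $\C$-compatible collection $C$, each element $b=\prod_{c\in C} x_c$ takes the form $\x^{g(b)}\bigl(1+\sum_h \lambda_h \x^h\bigr)$ where, by $\gg$-vector homogeneity and the principal-coefficients identity $\x^{\widetilde{b}_j}=x_1^{b_{1j}}\cdots x_n^{b_{nj}} y_j$ of Remark \ref{rem g}, every exponent vector $h$ is a non-negative integer combination of columns of $\widetilde B_T$. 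Proposition \ref{suffice} then gives linear independence as soon as the extended $\gg$-vector map $b\mapsto g(b)$ is injective on each of $\B^{\circ}$ and $\B$, which is precisely the content of Theorem \ref{th:bijection}. The main obstacle in this plan, once Theorem \ref{th:bijection} is invoked as a black box, is proving that the skein reduction used for spanning actually terminates: each skein relation can introduce new pairs of curves whose crossing counts must be tracked simultaneously with the count with $T$, and for $\B$ one must further verify that the Chebyshev rewriting does not re-introduce bangles; producing the right well-founded complexity measure is the technical heart of the argument.
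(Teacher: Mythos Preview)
Your spanning and linear-independence outlines match the paper's approach closely, and your worry about termination of the skein reduction is unfounded: by Definition~\ref{def:smoothing} each smoothing removes one crossing and introduces no new ones, so the total crossing count of the multicurve is a perfectly good well-founded measure.

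The containment argument, however, has a genuine gap. When $\zeta$ is an essential loop (no self-crossings), smoothing a crossing between $\zeta$ and an arc $\tau\in T$ applies the skein relation to the multicurve $\{\zeta,\tau\}$, so what you obtain is an identity of the form
\[
x_\tau\, x_\zeta \;=\; \pm Y_1 x_{C_+} \pm Y_2 x_{C_-}.
\]
Even if induction shows the right side lies in $\A$, this only gives $x_\tau x_\zeta\in\A$. Cluster variables are \emph{not} units in $\A$ (only in the ambient Laurent ring), so you cannot divide by $x_\tau$ to conclude $x_\zeta\in\A$. Notice too that your argument never invokes the ``at least two marked points'' hypothesis, which is already a warning sign: the paper explicitly states that containment is not known when there is a single marked point.

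The paper repairs this by running the skein relation in the other direction: it constructs two ordinary arcs $\gamma,\gamma'$ (using the two marked points) whose smoothing produces $\zeta$ on the \emph{right-hand side}, so that
\[
x_\gamma x_{\gamma'} \;=\; \pm y(\zeta{:}C)\,x_\zeta \;+\; (\text{terms already known to lie in }\A).
\]
Now one solves for $x_\zeta$ by dividing only by the coefficient monomial $y(\zeta{:}C)$, which \emph{is} a unit in $\Z\PP$. Once $x_\zeta\in\A$ for every essential loop, bangles follow as powers and bracelets via the Chebyshev identity $x_{\Brac_k\zeta}=T_k(x_\zeta)$.
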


We subdivide the proof into the following three steps:
\begin{enumerate}
\item  $\B^\circ$ and $\B$ are subsets of $\A$.
\item  $\B^\circ$ and $\B$ are spanning sets for $\A$.
\item  $\B^\circ$ and $\B$ are linearly independent.
\end{enumerate}

\subsection{$\B^\circ$ and $\B$ are subsets of $\A$}
We start by describing the relation between bangles and bracelets involving the Chebyshev polynomials.

If $\tau, \zeta$ are arcs or closed loops and $L$ is a lamination, we let $e(\tau,\zeta)$ (resp. $e(\tau,L)$) denote the number of crossings between $\tau$ and $\zeta$ (resp. $L$.).

\begin{proposition} \label{prop Cheby} Let $\zeta$ be an essential loop, and  let $Y_\zeta = \prod_{\tau \in T} y_\tau^{e(\zeta,\tau)}$. Then we have
$$ x_{\Brac_k \zeta} =
T_k(x_\zeta),$$ 
where $T_k$ denotes the $k$th normalized Chebyshev polynomial (with coefficients) defined in Section \ref{sect cheby}.
\end{proposition}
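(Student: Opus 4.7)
The proof is by induction on $k$ using the Chebyshev recurrence from Proposition \ref{chebyrec}: $T_0 = 2$, $T_1(x) = x$, and $T_k(x) = x T_{k-1}(x) - Y T_{k-2}(x)$. The base case $k=1$ is immediate from $\Brac_1\zeta = \zeta$, which gives $x_{\Brac_1\zeta} = x_\zeta = T_1(x_\zeta)$.

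The inductive step reduces to establishing the identity
\[
x_{\Brac_k\zeta} = x_\zeta \cdot x_{\Brac_{k-1}\zeta} - Y_\zeta \cdot x_{\Brac_{k-2}\zeta} \qquad (k \ge 3),
\]
together with the base case $x_{\Brac_2\zeta} = x_\zeta^2 - 2Y_\zeta$. Both are obtained by applying the skein relation of Theorem \ref{th:skein1} at a self-crossing of $\Brac_k\zeta$. The bracelet $\Brac_k\zeta$ has $k-1$ self-crossings, and at any one of them the two local smoothings produce: (a) the disjoint multicurve $\zeta \sqcup \Brac_{k-1}\zeta$, which splits off one ``wind'' of the bracelet; and (b) the bracelet $\Brac_{k-2}\zeta$ when $k \ge 3$, or (when $k = 2$) a contractible loop, whose value is $-2$ by Definition \ref{def closed loop}(1). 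The $Y$-monomial coefficients from Theorem \ref{th:skein1} can then be read off from intersections with the elementary laminations $L_\tau$: since $\Brac_k\zeta$ meets each arc $\tau \in T$ in $k \cdot e(\zeta,\tau)$ points, smoothing (a) preserves this total while smoothing (b) decreases it by $2 e(\zeta,\tau)$, and one obtains $Y_1 = 1$ and $Y_2 = Y_\zeta = \prod_{\tau \in T} y_\tau^{e(\zeta,\tau)}$. The signs in Theorem \ref{th:skein1} then arrange themselves so that the displayed recurrence holds in both cases; for $k = 2$ the factor $-2$ coming from the contractible loop combines with $Y_\zeta$ to produce the desired $-2Y_\zeta$ term. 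Matching this recurrence term-for-term with the Chebyshev recurrence and invoking the inductive hypothesis yields $x_{\Brac_k\zeta} = T_k(x_\zeta)$.

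The hard part will be the geometric/topological identification of the two smoothings at a self-crossing of $\Brac_k\zeta$, and the careful bookkeeping of signs and $Y$-monomial factors in Theorem \ref{th:skein1}. In particular, one must verify that the $k=2$ case really produces a contractible loop on the ``$\Brac_0 \zeta$'' side: this is precisely the geometric ingredient that reconciles the recurrence with the Chebyshev convention $T_0 = 2$ rather than the naive ``$x_{\Brac_0\zeta} = -2$.'' Once these geometric inputs are secured, the remainder is a routine induction on $k$.
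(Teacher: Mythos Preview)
Your overall strategy---induction on $k$ via the skein relation at a self-crossing of $\Brac_k\zeta$, combined with the Chebyshev recurrence---is exactly the paper's approach, and your computation of the $Y$-monomials from intersection numbers is correct. However, there is a real gap and one topological inaccuracy.

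\textbf{Topological inaccuracy.} For $k\ge 3$, the second smoothing of $\Brac_k\zeta$ does \emph{not} yield $\Brac_{k-2}\zeta$ directly: it yields $\Brac_{k-2}\zeta$ with a contractible kink (count self-crossings: $\Brac_k\zeta$ has $k-1$, so after resolving one you have $k-2$, while $\Brac_{k-2}\zeta$ has only $k-3$). By Definition~\ref{def closed loop}(2) this contributes an extra factor of $-1$. Your $k=2$ analysis (a plain contractible loop, value $-2$) is correct, but you should reconcile it with the $k\ge 3$ picture rather than treating them as unrelated phenomena.

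\textbf{The main gap: sign determination.} Your assertion that ``the signs in Theorem~\ref{th:skein1} then arrange themselves'' is precisely what needs to be proved, and it is not mere bookkeeping. Theorem~\ref{th:skein1} gives only $\pm$ signs; one must determine them. The paper does this in two steps. The first sign is shown to be $+$ by specializing $x_i=1$, $y_i=0$: each of $x_{\Brac_m\zeta}$ has a unique $y$-free term equal to $1$ (from the minimal matching), so $1=\pm 1\pm 0$ forces $+$. The second sign is the substantive part: the paper specializes $x_i=y_i=1$, reducing the question to the strict inequality
\[
|\Good(\widetilde{G}_{(k+1)\zeta})| < |\Good(\widetilde{G}_{\zeta})|\cdot |\Good(\widetilde{G}_{k\zeta})|,
\]
and then constructs an explicit injection $\psi$ from good matchings of $\widetilde{G}_{(k+1)\zeta}$ into pairs of good matchings of $\widetilde{G}_{\zeta}$ and $\widetilde{G}_{k\zeta}$ via a lift--split--swap--descend procedure on snake and band graphs. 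This combinatorial construction is the heart of the proof and is entirely absent from your sketch. Without it (or a substitute argument), you cannot rule out that the second sign is $+$, which would give the wrong recurrence.
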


\begin{proof} We prove the statement by induction on $k$. Smoothing $\Brac_{k+1}\zeta$ at one point of self-crossing produces the multicurves $\{\zeta,\Brac_k\zeta \}$ and $\{\gamma\}$, where $\zg$ is the curve $\Brac_{k-1}$ with a contractible kink.
It follows from Theorem \ref{th:skein1} that
\[x_{\Brac_{k+1}\zeta} = \pm x_\zeta x_{\Brac_k \zeta} \prod_{i=1}^n y_i^{(c_i-a_i)/2} 
\pm x_{\Brac_{k-1}\zeta} \prod_{i=1}^n y_i^{(c_i-b_i)/2},\]
where $c_i=e(\Brac_{k+1}\zeta,L_i)$,
$a_i=e(\Brac_{k}\zeta,L_i)+e(\zeta,L_i)$ and
$b_i=e(\Brac_{k-1}\zeta,L_i)$.
From the definition of bracelets, it follows that $c_i=a_i$ and that $c_i=b_i+2e(\zeta,\tau_i)$. Thus
\[x_{\Brac_{k+1}\zeta} = \pm x_\zeta x_{\Brac_k \zeta} \pm x_{\Brac_{k-1}\zeta} Y_\zeta.\]
It remains to show that the first sign is $+$ and the second is $-$.

Since $k\ge 1$, each   of $x_\zeta, x_{\Brac_k \zeta}$ and $x_{\Brac_{k+1} \zeta}$ is  a Laurent polynomial given by a band graph formula.
So in particular, each is in 
$\Z[x_i^{\pm 1}, y_i],$
 has all signs positive, and has a unique term without any coefficients $y_i$, corresponding to the minimal matching. On the other hand, $Y_\zeta$ is a monomial in the $y_i$'s which is not equal to $1$.
If we set all the $x_i$'s equal to 1, and the $y_i$'s equal to 0,
then we get $1 = \pm1\pm 0$, which shows that the first sign is $+$.

To see that the second sign is $-$, we use Definition \ref{def closed loop} and the specialization
$x_i=1$ and $y_i=1$ for all $i$.  Letting 
$\Good(G)$ denote the set of good matchings of $G$, and letting $\tilde{G}_{m\zeta}$ be a shorthand for the band graph $\tilde{G}_{\Brac_m\zeta}$, our equation becomes 
\[|\Good(\tilde{G}_{(k+1)\zeta})|=+|\Good(\tilde{G}_\zeta)|\cdot|\Good(\tilde{G}_{k\zeta})|\pm |\Good(\tilde{G}_{(k-1)\zeta})|.\]
It thus suffices to show that
$$|\Good(\tilde{G}_{(k+1)\zeta})| <
|\Good(\tilde{G}_{\zeta})| \cdot |\Good(\tilde{G}_{k\zeta})|.$$ 

For $d\geq 2$, we let $\bullet_{y'}\line(1,0){25}\bullet_{x'}$ denote the
edge of the snake graph $G_{d\zeta}$ or the band graph $\tilde{G}_{d\zeta}$ succeeding the last tile of the subgraph $G_\zeta$.  We will exhibit an injective map
$\psi: \Good(\tilde{G}_{(k+1)\zeta}) \longrightarrow \Good(\tilde{G}_{\zeta}) \times  \Good(\tilde{G}_{k\zeta}).$
In particular, given $\tilde{P} \in \Good(\tilde{G}_{(k+1)\zeta})$, we define $\psi(\tilde{P}) = (\tilde{Q}_1,\tilde{Q}_2)$ by the following:
\begin{itemize}
\item {\bf Lift} $\tilde{P}$ to $P$, a perfect matching of the snake graph $G_{(k+1)\zeta}$.
\item{\bf Split} $P$ along the edge $\bullet_{y'}\line(1,0){25}\bullet_{x'}$ into perfect matchings $P_1$ and $P_2$ of the snake graphs $G_\zeta$ and $G_{k\zeta}$, respectively.  Note there are
two cases here.  If the edge $\bullet_{y'} \line(1,0){25}\bullet_{x'}$  is in $ P$, we copy it, and include
it as a distinguished edge in both $P_1$ and $P_2$.
Otherwise, either $P_1$ or $P_2$ is missing one edge
to be a perfect matching, and we adjoin the edge $\bullet_{y'} \line(1,0){25}\bullet_{x'}$ 
to that  perfect matching.

\item {\bf Swap}.
Consider the symmetric difference $P_1 \ominus P_2$ which, by Lemma \ref{thm y},
consists of a union of cycles, and let $C$ be the cycle which encloses
the
tile $G_1$, if such a cycle exists, and let $C$ be empty otherwise. We
then
define the  \emph{first segment} of both $P_1$ and $P_2$ to be the matching on the
induced subgraph formed by the tiles enclosed by the cycle $C$. Swap the first segments
 of $P_1$ and $P_2$ to obtain new perfect matchings of $G_\zeta$ and $G_{k\zeta}$, which we denote as $Q_1$ and $Q_2$.  
\item {\bf Descend} $Q_1$ and $Q_2$ down to good matchings $\tilde{Q}_1$ and $\tilde{Q}_2$ of the band graphs $\tilde{G}_\zeta$ and $\tilde{G}_{k\zeta}$.
\end{itemize}

A straightforward analysis of nine possible cases (contingent on how the perfect matching $P$ looks locally around edges $\bullet_{x}\line(1,0){25}\bullet_{y}$ and 
		$\bullet_{y'}\line(1,0){25}\bullet_{x'}$) shows that the map $\psi$ is well-defined and has a left-inverse.  In particular, swapping the first segments of $P_1$ and $P_2$ turns the condition that $\tilde{P}$ is a \emph{good} matching of the band graph $\tilde{G}_{(k+1)\zeta}$ into the condition that $\tilde{Q_1}$ and $\tilde{Q_2}$ are \emph{good} matchings of the band graphs $\tilde{G}_\zeta$ and $\tilde{G}_{k\zeta}$.
\end{proof}

\begin{remark}
In the special case where the cluster algebra $\A$ has trivial coefficients, a similar formula can be found in \cite{FroGel}.
\end{remark}
\begin{remark}
In the special case where the surface is an annulus, Chebyshev polynomials were used in \cite{D,DuTh} to construct atomic basis for the cluster algebra.
\end{remark}

Next we show that the sets $\B^\circ$ and $\B$ are subsets of the cluster algebra, using our assumption that the number of marked points is at least 2. We do not know whether the result is true for surfaces with exactly one marked point. 

\begin{proposition}\label{lem:containment} If the surface has at least two marked points then 
the sets  $\B^\circ$ and $\B$ are subsets of $\A$.
\end{proposition}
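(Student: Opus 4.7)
The plan is to reduce the statement to the claim that $x_\zeta\in\A$ for every essential loop $\zeta$. Every element of $\B^\circ$ and $\B$ is a product $\prod_{c\in C}x_c$; since $\A$ is closed under multiplication, it suffices to show each factor $x_c$ lies in $\A$. For arcs, Theorem~\ref{thm MSW} identifies $x_c$ with a cluster variable. The factor corresponding to a $k$-bangle in a $\B^\circ$-product is literally $x_\zeta^k$, and by Proposition~\ref{prop Cheby} the factor corresponding to a $k$-bracelet in a $\B$-product is $T_k(x_\zeta)$, a polynomial in $x_\zeta$ with coefficients in $\ZZ\PP$. Both cases reduce the problem to the essential-loop case.

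For an essential loop $\zeta$ I would use the skein relations of Theorem~\ref{th:skein1}. The assumption $|M|\ge 2$ is used first to produce an arc $\alpha\subset (S,M)$ that crosses $\zeta$ transversally; such an $\alpha$ can be constructed by starting at one marked point, travelling across a small neighborhood of $\zeta$, and ending at another (or the same) marked point. Applying the skein identity at a chosen crossing of $\alpha$ and $\zeta$ yields
$$x_\alpha\,x_\zeta \;=\; \pm Y_1\,x_{C_+}\;\pm\;Y_2\,x_{C_-}$$
in $\A$, where each $C_\pm$ is a generalized arc. Iterating the skein identity on any self-crossings appearing in $C_\pm$ expresses each $x_{C_\pm}$ as a $\ZZ\PP$-linear combination of products of cluster variables (eventually reaching simple multicurves whose elements are products of cluster variables via Theorem~\ref{thm MSW}). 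Consequently $x_\alpha\,x_\zeta\in \A$.

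The hard part will be extracting $x_\zeta$ alone from the relation $x_\alpha\,x_\zeta\in \A$. My plan here is to vary the choice of $\alpha$: pick two arcs $\alpha$ and $\alpha'$ crossing $\zeta$ so that $x_\alpha$ and $x_{\alpha'}$ are relatively prime in the Laurent ring $\ZZ\PP[\xx^{\pm 1}]$ with respect to some cluster, and combine the resulting identities $x_\alpha x_\zeta,\,x_{\alpha'} x_\zeta\in \A$ to extract $x_\zeta$ by an elementary ideal-theoretic argument in the ambient Laurent ring. A second, perhaps cleaner, approach is to choose $\alpha$ adapted to $\zeta$ (for instance, a suitable arc of a carefully chosen triangulation) so that the right-hand side of the skein identity is manifestly divisible by $x_\alpha$ on the snake/band-graph level, with the division matching matchings of $\widetilde{G}_\zeta$ against matchings of the $G_{C_\pm}$ one-for-one. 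The principal obstacle in either route is verifying that the required divisibility or coprimality takes place in $\A$ itself and not merely in the ambient field $\Fcal$; the hypotheses $|M|\ge 2$ and the unpunctured condition enter here both to guarantee the supply of arcs $\alpha$ crossing $\zeta$ and to keep the inductive control on smoothings well-behaved, so that the combinatorial cancellation works uniformly for every essential loop $\zeta$.
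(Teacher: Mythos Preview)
Your reduction to the case of a single essential loop $\zeta$ is correct and matches the paper. The gap is in how you propose to show $x_\zeta\in\A$.

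Your plan is to smooth the pair $\{\alpha,\zeta\}$, obtain $x_\alpha x_\zeta\in\A$, and then divide by $x_\alpha$. You correctly identify the obstacle yourself: nothing guarantees that division by a cluster variable $x_\alpha$ stays inside $\A$. Your coprimality idea would at best show that $x_\zeta$ lies in the ambient Laurent ring $\ZZ\PP[\xx^{\pm1}]$ for every cluster $\xx$, i.e.\ in the \emph{upper} cluster algebra; but equality of $\A$ with its upper bound is exactly Corollary~\ref{cor:upper}, whose proof \emph{uses} the present proposition. So this route is circular. There is also a secondary circularity: when you iterate skein relations on the self-crossings of $C_\pm$, resolving a self-crossing of a generalized arc can produce a closed loop (Figure~\ref{skein5figure}), so ``eventually reaching simple multicurves whose elements are products of cluster variables'' is not justified---those simple multicurves may themselves contain essential loops.

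The paper avoids both problems with a different trick: instead of smoothing a loop against an arc, it smooths two carefully chosen \emph{arcs} $\gamma,\gamma'$ whose resolution \emph{produces} $\zeta$ as one of the resulting curves. Concretely, $\gamma$ is obtained by dragging $\zeta$ to a marked point $m_1$, and $\gamma'$ is an arc crossing $\gamma$ twice near $m_1$ (with a variant construction when every boundary component has a single marked point---this is where $|M|\ge 2$ is used). The skein expansion of $x_\gamma x_{\gamma'}$ then has the form
\[
x_\gamma x_{\gamma'} \;=\; \pm\, y(\zeta{:}C)\,x_\zeta \;+\; (\text{$\ZZ\PP$-combination of products of cluster variables}),
\]
so that solving for $x_\zeta$ requires dividing only by the coefficient monomial $y(\zeta{:}C)$, which is a unit in $\ZZ\PP$. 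No division by a cluster variable is needed, and no inductive appeal to other essential loops is made.
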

\begin{proof}
First recall that if $\zg$ is an arc, then $x_\zg$ is a cluster variable by \cite{MSW}. Thus if $C$ is a multicurve consisting of non-crossing arcs, then $x_C$ is a monomial of cluster variables, hence $x_C\in \A$.

Next suppose that  $\zeta$ is an essential loop. Suppose first that there exists one boundary component which contains at least two marked points $m_1$ and $m_2$.
Let $\gamma$ be the arc obtained by attaching the loop $\zeta$ to the point $m_1$; more precisely, 
$\gamma$ is the isotopy class of the curve $\gamma_1\zeta\gamma_1^{-1}$, where $\gamma_1$ is a curve from $m_1$ to the starting point of $\zeta$, see Figure \ref{fig element}. Let $\gamma'$ be the unique arc that crosses $\zg$ twice, connects the two immediate neighbors $m_1^-$ and $m_1^+$ of $m_1$ on the boundary, and is homotopic to the part of the boundary component between $m_1^-$ and $m_1^+$. Note that $m_1^-$ and $m_1^+$ coincide if this boundary component contains exactly two marked points.
The multicurve $C=\{\gamma,\gamma'\}$ smoothes to the four simple multicurves shown in Figure \ref{fig element2}, and it follows from Theorem \ref{th:skein1} that  
\[x_\zg x_{\zg'}= 0 \pm y(\za:C) x_\za \pm y(\zb:C) x_\zb \pm y(\zeta:C) x_\zeta  
\]
for some coefficients $y(\za:C)$, $y(\zb:C)$ and $y(\zg:C)$.
Solving for $x_\zeta$, we get \[x_\zeta=\big(x_\zg x_{\zg'}\pm y(\za:C) x_\za \pm y(\zb:C) x_\zb\big)/ y(\zeta:C),\]
which shows that $x_\zeta\in \mathcal{A}$.
\begin{figure}
\scalebox{0.6}{\input{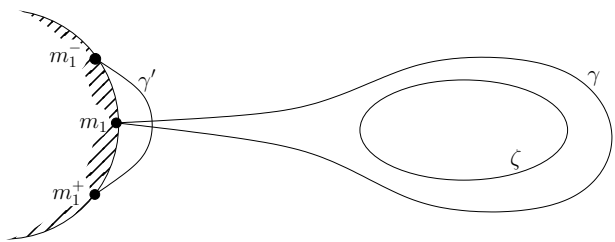}}
\caption{Two arcs $\zg,\zg'$ associated to the essential loop $\zeta$. The smoothing of the multicurve $\{\zg,\zg'\}$ is shown in Figure \ref{fig element2}.}
\label{fig element}
\end{figure}

\begin{figure}
\scalebox{0.6}{\input{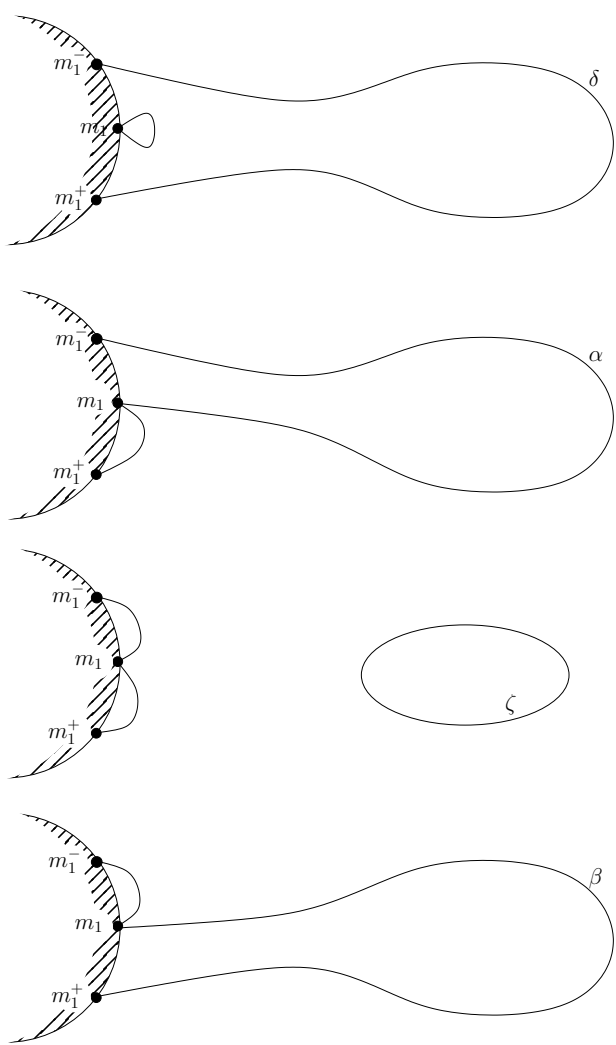}}
\caption{Smoothing of the multicurve $\{\zg,\zg'\}$ of Figure \ref{fig element}.}\label{fig element2}
\end{figure}

Now suppose  that each boundary component contains exactly one marked point. Then, by our assumption, there exist at least two such boundary components $D_1$ and $D_2$. Let $m_i$ denote the marked point on $D_i$.
Choose two distinct points $p_1$ and $p_2$ on the loop  $\zeta$, fix an orientation of $\zeta$,  and denote by $\zeta_1$ the segment of $\zeta$ from $p_1$ to $p_2$ and by $\zeta_2$ the segment of $\zeta$ from $p_2$ to $p_1$.
Let $\zg_1$ be a curve from $m_1 $ to $p_1$ and  $\zg_2$ a curve from $m_2 $ to $p_2$.
Define $\zg$ to be the arc homotopic to the concatenation $\zg_1\zeta_1\zg_2^{-1}$, see   Figure \ref{fig element3}.

To define $\gamma'$, we start with the arc from $m_1$ to $m_2$ given by $\zg_1\zeta_2^{-1}\zg_2^{-1}$ and add to it a complete lap around each of the boundary components $D_1,D_2$ in the directions that create crossings with $\zg$. In Figure \ref{fig element3}, $\zg'$ corresponds to the concatenation $\zd_1\zg_1\zeta_2^{-1}\zg_2^{-1}\zd_2$, where $\zd_i$ is a curve that starts and ends at $m_i$ and goes around the boundary component $D_i$ exactly once.

Then the multicurve $C=\{\gamma,\gamma'\}$ smoothes to the four simple multicurves shown in Figure \ref{fig element4}, and it follows again from Theorem \ref{th:skein1} that 
\[x_\zg x_{\zg'}=\pm y(\zeta:C) x_\zeta\pm y(\za:C) x_\za \pm y(\zb:C) x_\zb \pm y(\{\zs,\rho\}:C) x_\zs x_\rho . 
\]
Again, solving for $x_\zeta$  shows that $x_\zeta\in \mathcal{A}$.
\begin{figure}
\scalebox{0.7}{\input{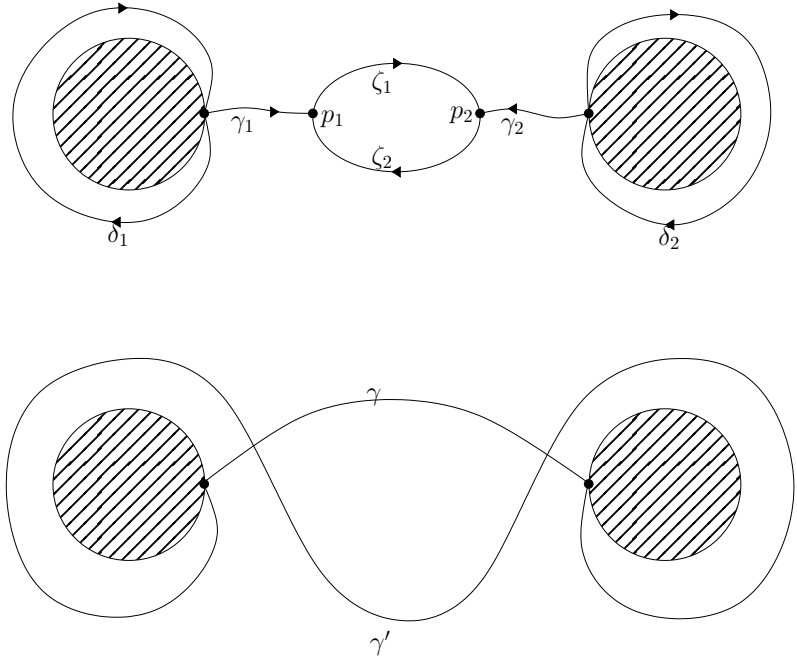}}
\caption {Two arcs $\zg,\zg'$ associated to the essential loop $\zeta$. The smoothing of the multicurve $\{\zg,\zg'\}$ is shown in Figure \ref{fig element4}.}\label{fig element3}
\end{figure}

\begin{figure}
\scalebox{0.7}{\input{figelement3.pstex_t}}
\caption {Smoothing of the multicurve $\{\zg,\zg'\}$ of Figure \ref{fig element3}.}\label{fig element4}
\end{figure}

This shows that for every essential loop $\zeta$ the element $x_\zeta$ is in the cluster algebra.
The element $x_{\Bang_k\zeta} $ is a power of $x_\zeta$, which shows that it also lies in the cluster algebra. This shows that $\B^\circ\subset \A.$  Now Proposition \ref{prop Cheby} implies that $\B\subset \A$. 
\end{proof}

\begin{cor}\label{cor:containment}
If the surface  has genus zero, then $\B^\circ$ and $\B$ are subsets of $\A$.\qed
\end{cor}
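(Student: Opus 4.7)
The plan is to deduce this corollary directly from Proposition \ref{lem:containment}, after checking that the genus-zero hypothesis, together with the standing assumptions on $(S,M)$, already forces the surface to have at least two marked points. Since $M$ is nonempty and every boundary component must contain a marked point, $S$ has at least one boundary component; and since $S$ has genus zero with boundary, it is a sphere with some positive number $b$ of open disks removed.

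First I would split into cases according to $b$. If $b=1$, then $S$ is a disk; the blanket assumption in Section~\ref{sect surfaces} excludes disks with $1$, $2$, or $3$ marked points, so $|M|\ge 4\ge 2$. If $b\ge 2$, then each of the $b$ boundary components contributes at least one marked point, and hence $|M|\ge b\ge 2$. In both cases the hypothesis $|M|\ge 2$ of Proposition \ref{lem:containment} is satisfied.

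Consequently, Proposition \ref{lem:containment} applies to $(S,M)$, and yields $\B^\circ\subset \A$ and $\B\subset \A$. There is essentially no obstacle: all real content has been handled in Proposition \ref{lem:containment}, and the corollary merely records that the hypothesis of having two marked points is automatic in the genus-zero setting.
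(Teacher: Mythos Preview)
Your proof is correct and matches the paper's intended reasoning: the corollary is marked \qed\ immediately after Proposition~\ref{lem:containment}, and the implicit argument is exactly the one you give---under the standing assumptions of Sections~\ref{sec two bases}--\ref{full-rank} (unpunctured surface with nonempty boundary, each boundary component containing a marked point, and the disk with $\le 3$ marked points excluded), genus zero forces $|M|\ge 2$, so Proposition~\ref{lem:containment} applies directly.
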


\subsection{$\B^\circ$ and $\B$ are spanning sets for $\A$}\label{sect span}
\begin{lemma}
The sets $\B^\circ$ and $\B$ are both spanning sets for the cluster algebra $\A$.
\end{lemma}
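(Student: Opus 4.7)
Since $\A = \ZZ \PP[\Xcal]$ is generated as a $\ZZ \PP$-module by monomials in cluster variables, and each cluster variable corresponds via Theorem \ref{clust-surface} to an arc in $(S,M)$, it suffices to show that for any multiset $C = \{\gamma_1, \ldots, \gamma_k\}$ of arcs, the product $x_{\gamma_1} \cdots x_{\gamma_k} = x_C$ lies in the $\ZZ \PP$-span of $\B^\circ$ and of $\B$. My plan is to handle $\B^\circ$ first, by induction on the total number of crossings in the multicurve $C$ (counting both self-crossings and crossings between distinct curves), and then to deduce the result for $\B$ via the Chebyshev identity of Proposition \ref{prop cheb}.

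For the base case, if $C$ has no crossings then, after removing contractible kinks (which by Definitions \ref{def:matching} and \ref{def closed loop} only contribute signs $\pm 1 \in \ZZ \PP$), evaluating contractible loops to $-2$, and killing any curve that cuts out a monogon, the resulting multicurve lies in $\C^\circ(S,M)$, so $x_C$ lies in $\pm \ZZ \PP \cdot \B^\circ$. Otherwise, choose any crossing of $C$ (between two distinct curves, or a self-crossing of one of them) and apply Theorem \ref{th:skein1} to write
\[
x_C = \pm Y_1 \, x_{C_+} \pm Y_2 \, x_{C_-},
\]
where $Y_1, Y_2 \in \PP$ and the multicurves $C_+, C_-$ each have strictly fewer crossings than $C$: smoothing is a purely local operation at the chosen crossing, so no new crossings are introduced elsewhere. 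By the inductive hypothesis, $x_{C_+}$ and $x_{C_-}$ both belong to $\ZZ \PP \cdot \B^\circ$, hence so does $x_C$.

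To pass from $\B^\circ$ to $\B$, observe that every element of $\B^\circ$ has the form
\[
x_C \;=\; \prod_i x_{\alpha_i} \;\cdot\; \prod_j x_{\zeta_j}^{k_j},
\]
where the $\alpha_i$ are arcs, the $\zeta_j$ are distinct essential loops, and the whole collection is pairwise non-crossing. By Proposition \ref{prop cheb}, each $x_{\zeta_j}^{k_j}$ expands as a positive $\ZZ[Y_{\zeta_j}]$-linear combination of the normalized Chebyshev polynomials $T_m(x_{\zeta_j})$, and by Proposition \ref{prop Cheby} we have $T_m(x_{\zeta_j}) = x_{\Brac_m \zeta_j}$. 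Substituting these expansions, $x_C$ becomes a $\ZZ \PP$-linear combination of products of the shape $\prod_i x_{\alpha_i} \cdot \prod_j x_{\Brac_{m_j} \zeta_j}$; since the $\alpha_i$ and $\zeta_j$ are pairwise non-crossing and each $\zeta_j$ now contributes a single bracelet, each such product is the $x$-value of a $\C$-compatible collection, hence lies in $\B$. Combined with the spanning statement for $\B^\circ$ already established, this shows $\B$ also spans $\A$.

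The main obstacle is verifying that the smoothing induction terminates cleanly. One must check against all of the cases in Figures \ref{skeinfigure}, \ref{skein3figure}, and \ref{skein5figure} that each application of the skein relation strictly decreases the global crossing count, and that the degenerate outputs produced by smoothing --- contractible kinks, contractible loops, monogon-bounding curves, and repeated parallel copies of the same arc or loop --- are absorbed by the definitions of Section \ref{sect def loops} rather than reintroducing crossings or stalling the descent. A secondary technical point is confirming that the rewriting step $x_{\zeta_j}^{k_j} \mapsto \sum_m c_m \, x_{\Brac_m \zeta_j}$ preserves the non-crossing condition with the remaining arcs and loops; this holds because bracelets differ from their underlying essential loops only by self-crossings localized in a neighborhood of the loop.
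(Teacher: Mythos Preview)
Your proposal is correct and follows essentially the same approach as the paper: induct on the number of crossings using the skein relations of Theorem~\ref{th:skein1} to show $\B^\circ$ spans, then pass to $\B$ via Propositions~\ref{prop cheb} and~\ref{prop Cheby}. One small point: you state the induction for multisets of \emph{arcs}, but smoothing can produce closed loops (see Figure~\ref{skein3figure}), so the inductive hypothesis must be formulated for arbitrary multicurves, as the paper does explicitly; your base case already anticipates this, so it is only a matter of stating the stronger hypothesis.
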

\begin{proof}
We start by showing the result for $\B^\circ$. Since the elements of the cluster algebra are polynomials in the cluster variables, it suffices to show that any finite product of cluster variables can be written as a linear combination of elements of $\B^\circ$. 

We will prove the more general statement that for any multicurve $C$, the element $x_C=\prod_{c\in C}x_c$ can  be written as a linear combination of elements of $\B^\circ$. 
If there are no   crossings between the elements of $C$, then $x_C\in\B^\circ$, and we are done.
Suppose therefore that there are exactly $d$   crossings between the elements of $C$. Using Theorem \ref{th:skein1}, we can write
\[x_C=\pm Y_+ x_{C_+} \pm Y_-x_{C_-}\]
where $Y_+$ and $Y_{-}$ are coefficient monomials, while $C_+$ and $C_-$ are multicurves each of which has at most $d-1$ crossings between its elements. The statement for $\B^\circ$ now follows by induction. 

To show the statement for $\B$, we use Propositions \ref{prop cheb} and \ref{prop Cheby}, which show that, for each bangle $\Bang_k\zeta$, we can write $x_{\Bang_k \zeta}$ as a positive integer linear combination of elements of $\B$. Since $\B^\circ$ is a spanning set, it follows that $\B$ is too.
\end{proof}

\begin{remark}
While $\B$ is expected to be an atomic basis, $\B^\circ$ is definitely not atomic. In particular, $x_{\Brac_k \zeta}$ is in $\A^+$ (it expands positively in terms of every cluster), but its expansion in the basis $\B^\circ$ uses the polynomial $T_k(x)$, which has negative coefficients.
\end{remark}

By comparing our construction of
the basis $\mathcal{B}$ with that of Fock and Goncharov, we obtain the
following result.

\begin{corollary}\label{cor 4.9} For a coefficient-free cluster algebra $\A$ from an unpunctured surface
with at least two marked points, the upper cluster algebra and the
cluster algebra coincide. Moreover, the sets $\B$ and $\B^\circ$ are both bases of $\A$.
\end{corollary}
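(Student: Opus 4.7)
The plan is to deduce both statements from the Fock--Goncharov basis of the upper cluster algebra \cite{FG1} together with the containment and spanning arguments already established earlier in the section. Write $\mathcal{U}$ for the (coefficient-free) upper cluster algebra attached to $(S,M)$, so that $\A \subseteq \mathcal{U}$ automatically.

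The first step is to observe that the containment and spanning arguments of this section descend to the coefficient-free setting. The proof of Proposition \ref{lem:containment} and of the preceding spanning lemma uses only the skein identities of Theorem \ref{th:skein1} and the Chebyshev identities of Proposition \ref{prop Cheby}; specializing each $y_i$ to $1$ turns every coefficient monomial $y(\cdot:C)$ and each $Y_\zeta$ into $1$, so the same local algebraic manipulations (solving for $x_\zeta$, resolving crossings, rewriting $x^k$ in terms of $T_k$) show that, in the coefficient-free case, $\B^\circ$ and $\B$ are contained in $\A$ and span it.

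The second step is to invoke \cite{FG1}: for a coefficient-free cluster algebra from an unpunctured surface, Fock and Goncharov construct a basis of $\mathcal{U}$ (the ring of universally Laurent polynomials) indexed by the same collections $\C^\circ$ and $\C$ of curves, whose elements are given by traces of products of $SL_2$-monodromy matrices. By the identification of their construction with ours carried out in \cite{MW} (and noted in the introduction), the Fock--Goncharov Laurent polynomials agree with the products $\prod_{c\in C} x_c$ defining $\B^\circ$ and $\B$. Hence $\B^\circ$ and $\B$ are themselves bases of $\mathcal{U}$.

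Combining the two steps proves (a): a basis of $\mathcal{U}$ lies in $\A$, so every element of $\mathcal{U}$ expands as a finite $\mathbb{Z}$-linear combination of elements of $\A$, forcing $\mathcal{U} \subseteq \A$ and hence $\A = \mathcal{U}$. Part (b) is then immediate, since any basis of $\mathcal{U}$ is automatically a basis of $\A = \mathcal{U}$. The main (and really only) obstacle is the precise identification between our band-graph matching formulas and Fock--Goncharov's $SL_2$-trace formulas; we bypass it by citing \cite{MW}, where this identification is established directly.
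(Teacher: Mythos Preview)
Your argument is correct and matches the paper's proof: identify $\B$ with the Fock--Goncharov basis of the upper cluster algebra via \cite{MW}, then use Proposition~\ref{lem:containment} (specialized to trivial coefficients) to conclude $\B \subset \A$, forcing $\A = \mathcal{U}$. One minor imprecision: the paper (citing \cite[Theorem~4.11, Proposition~4.12]{MW}) only identifies the \emph{bracelet} set $\B$ with the Fock--Goncharov basis, not $\B^\circ$; once $\A = \mathcal{U}$ and $\B$ is a basis, the fact that $\B^\circ$ is also a basis follows from the invertible (triangular) Chebyshev transition between $\B$ and $\B^\circ$ given by Propositions~\ref{prop Cheby} and~\ref{prop cheb}, rather than directly from \cite{FG1}.
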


\begin{proof}
It follows from \cite[Theorem 4.11, Proposition 4.12]{MW} that the set $\B$ coincides with the basis of the upper cluster algebra constructed in \cite{FG1}. 
Proposition \ref {lem:containment} ensures that
$\mathcal{B}$ is a subset of the cluster algebra rather than
simply the upper cluster algebra.  Therefore $\B$ is a basis for the cluster algebra and for the upper cluster algebra, and the two algebras coincide.
\end{proof}

\subsection{$\B^\circ$ and $\B$ are linearly independent sets}
It remains to show the linear independence of the sets $\B^\circ$ and $\B$. This is done in Sections \ref{sec:lattice} and \ref{sec:g}.

\section{Lattice structure of the 
matchings of  snake and band graphs}\label{sec:lattice}

In this section we describe the structure of the 
set of  perfect matchings of a snake graph, and the set of good matchings of 
a band graph. 
The main application of our
analysis of matchings is the proof of Theorem 
\ref{g-loops} below.  In Section \ref{sec:g}, we will use this  theorem 
to extend the definition of 
$\gg$-vector to all elements of $\B$ and $\B^{\circ}$.

\begin{theorem} \label{g-loops}
Any element $z$ 
of $\B^{\circ}$ or $\B$
 contains a unique term $\x^g$  not divisible
by any coefficient variable, and the exponent vector of 
each other term is obtained from $g$
by adding a non-negative linear combination of columns 
of $\widetilde{B_T}$.
The same is true if we replace $z$ by any product of 
elements in $\B^{\circ}$ or $\B$.
\end{theorem}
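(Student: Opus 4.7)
My plan is to reduce the theorem to a single combinatorial identity about matchings of snake/band graphs, apply it to each generator $x_c$ in $\B^\circ \cup \B$, and then extend to products by multiplicativity. Throughout, I identify the extended exponent vector of a Laurent monomial in $x_1,\dots,x_n,y_1,\dots,y_n$ with a vector in $\Z^{2n}$ by viewing the principal coefficient variables $y_j$ as the bottom coordinates.

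The heart of the argument is the following claim: for any perfect matching $P$ of the snake graph $G_c$, or any good matching $P$ of the band graph $\widetilde G_c$ (regarded, via Remark \ref{descend}, as a perfect matching of the underlying snake graph), the extended exponent vector of $x(P)y(P)/\mathrm{cross}(T,c)$ differs from that of the minimal matching $P_-$ by exactly
\[
\sum_{j\in J(P)} \widetilde b_{i_j},
\]
where $J(P)$ is the index set from Lemma \ref{thm y} and $\widetilde b_{i_j}$ is the $i_j$-th column of $\widetilde{B_T}$. The $y$-coordinate change is immediate from the definition $y(P)=\prod_{j\in J}y_{\tau_{i_j}}$. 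The $x$-coordinate change I would prove by reducing to single-tile flips: swapping the two matchings of an isolated tile $G_j$ exchanges two opposite pairs among the four boundary edges labeled by the arcs of the triangles $\Delta_{j-1},\Delta_j$ adjacent to $\tau_{i_j}$, and the resulting exponent change matches $b_{i_j}$ precisely because of the clockwise-adjacency definition of $B_T$. For a multi-tile region $R=\bigcup_{j\in J}G_j$ bounded by cycles of $P_-\ominus P$, the interior edges (shared between two tiles of $J$) retain their matching status under the flip, and correspondingly their contributions to $\sum_{j\in J}b_{i_j}$ cancel, because the shared edge lives in one common triangle and enters the signed count for the two adjacent tiles with opposite signs; the boundary edges of $R$ contribute once each, with signs matching the flip direction.

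Given the combinatorial identity, each single generator $x_c$ in $\B^\circ$ or $\B$ immediately satisfies the theorem: $P_-$ is the unique (good) matching with $J(P)=\emptyset$, so it gives the unique $y$-free term $\x^g$ with coefficient $1$, and every other term has extended exponent $g+\sum_{j\in J(P)}\widetilde b_{i_j}$, a non-negative integer combination of columns of $\widetilde{B_T}$. For a product $\prod_i x_{c_i}$, I write each factor as $\x^{g_i}\bigl(1+\sum_h \mu_{i,h}\x^h\bigr)$ with $\mu_{i,h}>0$ and each $h$ a non-negative combination of columns. Expanding the product yields $\x^{\sum g_i}$ with coefficient $1$, and every other term has extended exponent $\sum g_i$ plus a non-negative combination of columns; positivity of all matching coefficients prevents any cancellation, so $\x^{\sum g_i}$ remains the unique $y$-free term.

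The main obstacle is the interior-edge cancellation in the combinatorial identity. This requires a careful check that the clockwise-orientation convention defining $B_T$ interacts correctly with the snake-graph gluing rule of Definition \ref{relorient} (adjacent tiles forced to carry opposite relative orientation), which is what guarantees that the two signed contributions from a shared interior edge cancel. For the arc case alone one could sidestep this analysis by invoking Proposition \ref{g} and Theorem \ref{thm MSW}: $\gg$-homogeneity of the cluster variable $x_\gamma$ together with the fact that the $y$-weight of $P$ is $\prod_{j\in J}y_{\tau_{i_j}}$ forces the $x$-exponent change to be $\sum_{j\in J}b_{i_j}$ automatically. However, $x_\zeta$ for an essential loop and $x_{\Brac_k\zeta}$ for a bracelet are not cluster variables, so there is no a priori homogeneity to exploit, and the direct combinatorial route described above appears to be necessary.
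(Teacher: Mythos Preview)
Your proposal is correct and reaches the same conclusion, but it organizes the passage from $P_-$ to an arbitrary matching $P$ differently from the paper. The paper first establishes a distributive lattice structure on perfect matchings of a snake graph and on good matchings of a band graph (Theorems \ref{lattice}, \ref{twisttheorem}, \ref{BandTheorem}), including the \emph{twist-parity condition}; it then observes that any $P$ is reached from $P_-$ by a chain of single-tile twists going up, and that one such twist on tile $G_i$ adds exactly the column $\widetilde b_i$ to the extended exponent vector. You bypass the lattice and compute the total change in one shot: interior edges of the enclosed region $R=\bigcup_{j\in J}G_j$ contribute with opposite signs to $\sum_{j\in J} b_{i_j}$ and cancel (this is correct---the shared edge $\tau_{[\gamma_j]}$ lies in the common triangle $\Delta_j$, where its signed adjacencies to $\tau_{i_j}$ and $\tau_{i_{j+1}}$ are necessarily opposite), leaving a signed sum over $\partial R$ that you match against the edges gained and lost between $P_-$ and $P$. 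Both routes ultimately rest on the same local orientation check; the paper packages it as the twist-parity condition and handles it one tile at a time, whereas your version must verify that the sign on each edge of $\partial R$ agrees with whether that edge lies in $P$ or in $P_-$, which is a bit more bookkeeping since the sign pattern along $\partial R$ tracks the alternating relative orientations of the tiles in $J$. The paper's lattice structure is of independent interest (it is identified with the lattice of string submodules of the corresponding module over the Jacobian algebra), while your argument is leaner for the sole purpose of Theorem \ref{g-loops}. For arcs, both you and the paper shortcut via Proposition \ref{g} and the constant-term-$1$ property of $F$-polynomials; for loops and bracelets, where no a priori homogeneity is available, the matching-level argument is indeed necessary, as you note, and your lift of good matchings to the underlying snake graph via Remark \ref{descend} is exactly what the paper uses as well.
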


Let $G$ be a snake or band graph with tiles $G_1,\dots,G_n$.  Let $P_-$ denote the 
minimal matching of $G$.  Given an arbitrary matching $P$ of $G$,
its {\it height function} or {\it height monomial}
is the monomial $\prod_{G_i} w_i$ where $G_i$ ranges over all tiles
enclosed by $P \cup P_-$.
We define a {\it twist} 
of a matching $P$ 
to be a local move 
affecting precisely one tile $T$ of $G$, replacing the two 
horizontal edges of $T$ with the two vertical edges, or vice-versa.

The following theorem is a consequence of \cite[Theorem 2]{ProppLattice}.
See Figure \ref{fig:lattice}.

\begin{theorem}\label{lattice}
Consider the set of all perfect matchings of a snake graph $G$
with tiles $G_1,\dots,G_n$.  Construct a graph $L(G)$ whose vertices
are labeled by these matchings, and whose edges connect two vertices
if and only if the two matchings are related by a twist.
This graph is the Hasse diagram of a distributive lattice, whose
minimal element is $P_-$.
The lattice is graded by the degree of each height monomial.
\end{theorem}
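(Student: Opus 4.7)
The plan is to deduce this essentially directly from Propp's theorem on the lattice of perfect matchings of simply connected planar bipartite graphs, after translating our setup into his.

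First I would observe that any snake graph $G$ built as in Section \ref{sect graph} is a simply connected planar bipartite graph: each tile is a square, the tiles are glued along single edges so that no cycles other than the boundaries of the tiles are created, and the bipartition is inherited from the checkerboard coloring of each tile (consistent across glued edges since gluing reverses orientation). Consequently the bounded faces of the planar embedding of $G$ are exactly the $n$ tiles $G_1,\dots,G_n$. This identifies our notion of a \emph{twist at $G_i$} with Propp's elementary move of rotating a matching around a bounded face.

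Next I would invoke \cite[Theorem 2]{ProppLattice}, which asserts that for any simply connected planar bipartite graph the set of perfect matchings, equipped with the cover relations given by face rotations, is the Hasse diagram of a distributive lattice. The order is characterized by the height function: $P \le P'$ iff the height of $P$ is coordinatewise at most the height of $P'$, where the $i$-th coordinate of the height of $P$ counts (signed) how many times the interior dual edge at tile $G_i$ is crossed when one deforms $P$ to a fixed reference matching. Transporting this to our language, the height vector of $P$ records precisely which tiles are enclosed by $P \cup P_-$ (because any other choice of reference matching merely shifts the heights by a constant), so it coincides up to an additive constant with the exponent vector of our height monomial $\prod_{G_i \subset P\cup P_-} w_i$.

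To pin down the minimal element, I would argue directly that $P_-$ is below every other matching: for any $P$, the symmetric difference $P_-\ominus P$ encloses a union of tiles by Lemma \ref{thm y}, so $P$ is reached from $P_-$ by a sequence of twists that each \emph{add} a tile to the enclosed region. Thus starting from $P_-$, all twists can only increase the height, so $P_-$ is the unique minimum and its height monomial is $1$. The grading statement is then automatic: each cover relation (a single twist at tile $G_i$) alters the symmetric difference with $P_-$ by exactly the tile $G_i$, and therefore multiplies or divides the height monomial by a single generator $w_i$; hence the total degree of the height monomial is a rank function on $L(G)$.

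The only genuinely non-formal step is the appeal to \cite{ProppLattice}; once that is in hand, the remaining work is just the translation described above, so I do not expect a serious obstacle. If one wanted to avoid citing Propp, the main technical point to prove in situ would be the diamond (exchange) property for twists at distinct tiles, i.e.\ that two twists on different tiles commute when both are applicable, which in the snake graph case follows because distinct tiles share at most one edge and that edge is not in any perfect matching reachable from $P_-$ by a single twist at either tile.
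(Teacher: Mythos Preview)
Your proposal is correct and takes essentially the same approach as the paper, which simply states that the theorem ``is a consequence of \cite[Theorem 2]{ProppLattice}'' without further argument; you have merely spelled out the translation between the snake-graph setup and Propp's framework. One minor point: your minimality argument for $P_-$ is cleaner if you just note that by definition $y(P_-)=1$ while every other height monomial has strictly positive degree, so once the lattice is graded by height, $P_-$ is automatically the unique minimum---the intermediate claim that every twist from $P_-$ ``adds a tile'' is then a consequence rather than something you need to argue directly.
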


\begin{figure}[t]
\begin{centering}
\includegraphics[height=3in]{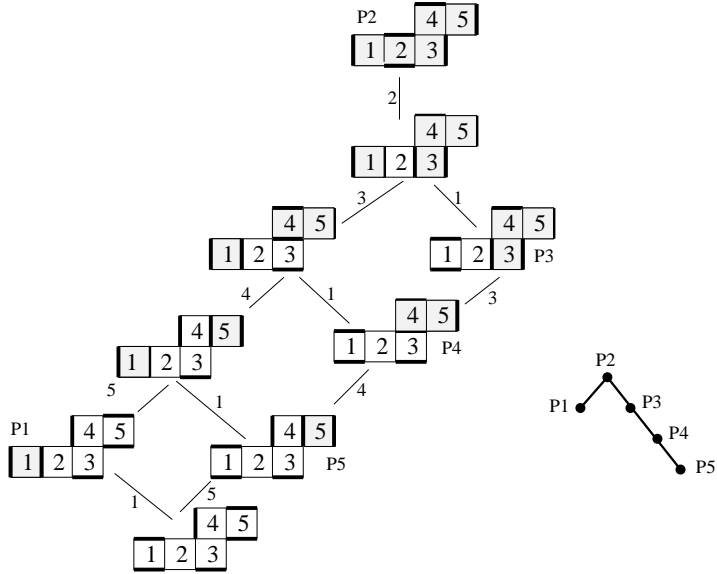}
\par\end{centering}
\caption{ 
Lattice of perfect matchings of a snake graph}
\label{fig:lattice}
\end{figure}

We now prove some more properties of $L(G)$.
We describe how to read off from $G$ a  poset $Q_G$
whose lattice of order ideals $J(Q_G)$ is equal to $L(G)$.  

Given a snake graph $G$, we define a {\it straight} subgraph of $G$
to be a subgraph $H$ formed by consecutive tiles which all lie
in a row or in a column.  We define a {\it zigzag} subgraph $H$ of $G$
to be a subgraph formed by consecutive tiles such that no three
consecutive tiles in $H$ lie in a row or in a column.

\begin{definition}\label{Q_G}
Let $G$ be a snake graph, with tiles $G_1,\dots,G_n$
(labeled from southwest to northeast).  Group the tiles of $G$
into overlapping connected subsets of tiles $S_1,\dots,S_k$,
where each $S_i$ is either a maximal-by-inclusion 
straight or  zigzag subgraph, 
and the $S_i$'s alternate between straight and zigzag subgraphs.
We  associate to $G$ (the Hasse diagram of) a poset $Q=Q_G$ as follows (see Figure \ref{fig:lattice}):
the elements of the poset are labeled $P_1,\dots,P_n$, and there 
is an edge in the Hasse diagram of $Q$ between $i$ and $i+1$.  Suppose $S_i$ 
consists of tiles $G_{r},G_{r+1},\dots,G_{s}$.
If $S_i$ is a zigzag subgraph, 
then the edges of the Hasse diagram between $r$ and $r+1$, $r+1$ and $r+2,...$, $s-1$ and $s$,
are all either oriented
northeast or all oriented southeast.  And if $S_i$ is a straight
subgraph, then the edges of the Hasse diagram between $i_1,\dots,i_r$ alternate between
northeast and southeast orientations.  
If the tile $G_2$ is to the right of (respectively, above) the  tile $G_1$,
the edge from $1$ and $2$
is oriented northeast (respectively, southeast).
\end{definition}
Note that the snake graph in 
Figure \ref{fig:lattice}
 consists of a straight subgraph
$S_1$ consisting of tiles $G_1,\dots,G_3$, and a zigzag subgraph
$S_2$ consisting of tiles $G_2,\dots,G_5$.

\begin{theorem}\label{twisttheorem}
Let $G$ be a snake graph, with tiles  $G_1,\dots,G_n$.  
We assume that the tile $G_1$ is chosen to have {\it positive relative orientation}
(see Definition \ref{relorient}).
Then $L(G)$ is the lattice of order ideals 
$J(Q_G)$ of the poset $Q_G$ from  Definition \ref{Q_G};
the support of the height monomial of a matching in $L(G)$ is precisely 
the elements in the corresponding order ideal.
Moreover, the \emph{twist-parity condition} is satisfied:
if $i$ is odd (respectively, even),
a twist on tile $G_i$ going up in the poset replaces 
the horizontal edges in $G_i$ with the vertical edges
(respectively, the vertical edges with the horizontal edges).
\end{theorem}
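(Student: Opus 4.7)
The plan is to establish a poset isomorphism $L(G) \cong J(Q_G)$ via the explicit map $\phi$ sending a perfect matching $P$ to the support of its height monomial $y(P)$, viewed as a subset of $\{P_1,\dots,P_n\}$. Since Theorem~\ref{lattice} already tells us that $L(G)$ is a distributive lattice, it suffices to show $\phi$ is an order-preserving bijection onto $J(Q_G)$, after which the twist-parity condition can be checked separately.

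I would proceed by induction on $n$. The base case $n=1$ is immediate: the two perfect matchings $P_\pm$ of a single tile satisfy $\phi(P_-)=\emptyset$ and $\phi(P_+)=\{P_1\}$, which exactly matches the two order ideals of the singleton poset $Q_G$. The twist-parity condition for the odd index $i=1$ holds by the convention that $G_1$ has positive relative orientation, so that $P_-$ consists of horizontal edges and the unique upward twist replaces them by vertical edges.

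For the inductive step, I use Lemma~\ref{thm y}: the symmetric difference $P\ominus P_-$ is a disjoint union of cycles, and in a snake graph each such cycle encloses a consecutive range of tiles $[a,b]$. The admissibility of an interval $[a,b]$ as an enclosed range depends on the local geometry at its two ends, specifically on whether the triples $G_{a-1},G_a,G_{a+1}$ and $G_{b-1},G_b,G_{b+1}$ form straight or zigzag configurations. A careful case analysis should show that unions of admissible intervals coincide exactly with the order ideals of $Q_G$ under the Hasse-diagram convention of Definition~\ref{Q_G} (NE vs.\ SE edges alternating inside straight subgraphs and constant inside zigzag subgraphs), and that the lattice covers in $L(G)$ given by single twists correspond to the covering relations in $J(Q_G)$ (adjoining one new tile to the ideal). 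The twist-parity condition then propagates through the induction using the fact that consecutive tiles in a snake graph have opposite relative orientations by the gluing rule $\mathrm{rel}(\tilde G_{j+1},T)\neq \mathrm{rel}(\tilde G_j,T)$, so $G_i$ has the same relative orientation as $G_1$ precisely when $i$ is odd; combined with the starting convention on $G_1$, this pins down whether an upward twist at $G_i$ swaps horizontal-to-vertical (odd $i$) or vertical-to-horizontal (even $i$).

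The \emph{main obstacle} is the geometric case analysis identifying admissible intervals: one must verify that the set of intervals $[a,b]$ which can appear as cycles in $P\ominus P_-$, determined by purely local constraints at the two endpoints, matches the abstract condition that $\phi(P)$ be downward-closed in $Q_G$. This is delicate because the NE/SE edge rule in Definition~\ref{Q_G} switches behavior at the boundary between a maximal straight subgraph and a maximal zigzag subgraph, and one has to check that the allowed ``starting'' and ``ending'' positions of a cycle respect exactly this alternation. Once this combinatorial matching is in hand, injectivity of $\phi$ follows because the cycles of $P\ominus P_-$ (and hence $P$ itself, given $P_-$) are recovered from the set $\phi(P)$ by decomposing it into maximal admissible intervals.
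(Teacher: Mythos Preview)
Your overall setup---defining $\phi(P)$ as the height-monomial support and aiming to show it bijects onto $J(Q_G)$---is correct, and your twist-parity argument via alternating relative orientations is fine and matches the paper's.  But the route you sketch is genuinely different from the paper's, and your ``inductive step'' is not actually inductive: you invoke Lemma~\ref{thm y} and then propose to characterize directly which unions of intervals $[a,b]$ can arise as $\phi(P)$, without ever calling the hypothesis for smaller $n$.  All the content is then deferred to the unspecified endpoint case analysis, and you would additionally need to argue that the admissibility constraints on \emph{several} disjoint intervals are independent, which you have not addressed.

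The paper's proof is a structural induction that partitions matchings of $G$ by their behavior on the \emph{last} tile $G_n$.  Type~1 matchings use the outer edge of $G_n$ and restrict to matchings of $H_1=G_1\cup\dots\cup G_{n-1}$; Type~2 matchings use both horizontal edges of $G_n$, and forced edges then propagate backward through the last maximal straight or zigzag piece $S_k$, putting them in bijection with matchings of a strictly smaller subgraph $H_2$.  Induction identifies $L(H_1)\cong J(Q_{H_1})$ and $L(H_2)\cong J(Q_{H_2})$, and the two sublattices are glued along twists at $G_n$ to recover $J(Q_G)$: Type~1 corresponds to order ideals not containing $n$, Type~2 to those containing $n$ (and, in the zigzag case, necessarily all of $r,r+1,\dots,n$).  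This mirrors the standard recursion for order ideals of a fence poset by peeling off an extremal element, so the two pieces fit together without any endpoint case analysis.  Your direct approach could be made to work, but it replaces one clean reduction by a simultaneous local analysis at both ends of every interval; the paper's decomposition is shorter and makes the role of the straight/zigzag dichotomy in Definition~\ref{Q_G} transparent.
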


\begin{proof}
We use induction on the number of tiles.
If $G$ is composed of tiles $G_1,\dots,G_n$,
there are two cases: either $G_n$ is 
to the right of $G_{n-1}$, or is 
directly above tile $G_{n-1}$.
We consider the first case (the second case is similar, so we omit it).
Let $H_1$ be the subgraph of $G$ consisting of tiles $G_1,\dots,G_{n-1}$.
Note that each perfect matching of $H_1$ can be extended uniquely
to a perfect matching of $G$ by adding the rightmost vertical 
edge of $G_n$. We call these {\it Type 1} matchings of $G$.
Now, consider perfect matchings of $G$ which use
the two horizontal edges of $G_n$: we call these {\it Type 2} matchings.
Recall the decomposition of $G$ as a union of subgraphs 
$S_1,\dots,S_k$ from Definition \ref{Q_G}.  Suppose that $S_k$
consists of tiles $G_r, G_{r+1},\dots,G_n$.
If $S_k$ is a zigzag subgraph, then Type 2 perfect matchings will be forced
to include every other edge of the boundary of $G_{r+1} \cup \dots \cup G_n$, 
and indeed,
will be in bijection with perfect matchings of the subgraph $H_2$ of $G$
consisting of tiles $G_1,\dots,G_{r-1}$.  If $S_k$ is a straight subgraph, then Type 2 perfect matchings
will be in bijection with perfect matchings of the subgraph $H_2$ of $G$
composed of tiles $G_1,\dots,G_{n-2}$.

In Figure \ref{fig:lattice}, there are two Type 2 perfect matchings,
$P1$ and the minimal element in the poset. These perfect matchings
are in bijection with matchings of $H_2$, which in this case consists
of just tile $G_1$.  The other perfect matchings are of Type 1. 

The set of Type 1 matchings forms a sublattice $L_1$ of $L(G)$ (isomorphic
to $L(H_1)$), and the set of Type 2 matchings forms a sublattice $L_2$ of $L(G)$
(isomorphic to $L(H_2)$).  By induction, within $L_1$ and $L_2$, 
the twist-parity condition 
is satisfied (note that within $L_1$ and $L_2$ there
are no twists involving tile $G_n$).
The lattice $L(G)$ is equal to the disjoint union
of $L_1$ and $L_2$  together with some edges connecting them,
which correspond to twists on tile $G_n$.  
If $n$ is odd (respectively, even), then the minimal matching $P_-$ of $G$ uses
one or both of the horizontal (respectively, vertical) 
edges of $G_n$.  Therefore when $n$ is odd (respectively, even), 
if $P$ is a matching of $G$ which uses both horizontal (respectively, vertical)
edges of $G_n$, performing a twist will increase the height function.
This proves the twist-parity condition.

To prove that $L(G)\cong J(Q_G)$,
we use the decomposition
$G = S_1 \cup \dots \cup S_k$.  First suppose that 
$S_k$ is a straight subgraph.  
If $n$ is even then the Type 1 matchings do not contain
$w_n$ in their height monomial, and by induction they are in bijection
with order ideals in $Q_{H_1}$, that is, order ideals of $Q_{G}$ 
which do not use $n$.  The Type 2 matchings {\it do} contain
$w_n$ and also $w_{n-1}$ in their height monomial, because $S_k$ is straight and $k$ is even.  By induction
they are in bijection with order ideals in $Q_{H_2}$, which in turn
are in bijection with order ideals of $Q_G$ which involve $n$ and $n-1$.
Together, this gives
a decomposition of the order ideals of $Q_G$ as a disjoint union
of the Type 1 and Type 2 matchings, which proves that $L(G) \cong
J(Q_G)$.  When $n$ is odd the argument is 
similar, but this time it is the Type 1 matchings whose height monomial
contains $w_n$.

Now suppose that $S_k$ is a zigzag subgraph.
Write $S_k = G_r \cup G_{r+1} \cup \dots \cup G_n$.
If $n$ is even then the Type 1 matchings do not contain $w_n$
in their height monomial, and by induction they are in bijection
with order ideals in $Q_{H_1}$, which in turn are in bijection
with order ideals of $Q_G$  which do not use $n$.
The Type 2 matchings must contain $w_r, w_{r+1},\dots,w_n$
in their height monomials, and by induction are in bijection
with order ideals in $Q_{H_2}$, which in turn are in bijection
with order ideals of $Q_G$ which involve $n$ (and hence $n-1,n-2,\dots,r$.)
Together, this gives
a decomposition of the order ideals of $Q_G$ as a disjoint union
of the Type 1 and Type 2 matchings, which proves that $L(G)$
is isomorphic to $J(Q_G)$.  When $n$ is odd the argument is similar,
but this time the height monomials of the Type 1 matchings contain $w_n$,
and the height monomials of the Type 2 matchings do not contain 
any of $w_r,w_{r+1},\dots,w_n$.
\end{proof}

\begin{remark}
If $\mathcal{Q}_T$ is the quiver of the triangulation $T$, then each generalized arc $\zg$ defines a string module $M(\zg)$ over the corresponding Jacobian algebra, see \cite{BZ}. The string of $M(\zg)$ is precisely the poset $Q$ and the lattice $L(G)$ is the lattice of string submodules of $M(\zg)$.
\end{remark}

We now consider the good matchings of a band graph $\widetilde{G}$,
where $\widetilde{G}$ is obtained from a snake graph $G$ by identifying
two edges.
By Remark \ref{descend}, we can identify the good matchings of 
$\widetilde{G}$ with a subset of the perfect matchings of $G$,
so in particular, we can consider the subgraph $L(\widetilde{G})$
of $L(G)$ which is obtained from $L(G)$ by restricting to the good
matchings.  As we now explain, $L(\widetilde{G})$ has the structure
of a distributive lattice, that is, we can identify it with 
the lattice of order ideals of a certain poset.

\begin{definition}\label{poset-band}
Let $\widetilde{G}$ be a band graph obtained from a snake graph 
$G$ with tiles $G_1,\dots,G_n$.  There are four different cases,
based on the geometry of how $x$ and $y$ sit in the first and last tile
of $\widetilde{G}$, see Figure \ref{fig band}.  Let $Q=Q_G$ be the 
poset associated to $G$ by Definition \ref{Q_G}.  We now let 
$\widetilde{Q} = \widetilde{Q}_G$ be the poset obtained from 
the poset $Q=Q_G$ by imposing one more relation:
in Cases 1 and 2, we impose the relation $1 > n$;
and in Cases 3 and 4, we impose the relation $1 < n$.
(It is straightforward to verify that $\widetilde{Q}$ is still 
a well-defined poset.)
\end{definition}

We have the following analogue of Theorem \ref{twisttheorem} for 
band graphs.
\begin{theorem}\label{BandTheorem}
Let $\widetilde{G}$ be a band graph obtained from the 
snake graph $G$, with tiles  $G_1,\dots,G_n$.  
We assume that tile $G_1$ is chosen to have {\it positive relative orientation}.
Then $L(\widetilde{G})$ is the lattice of order ideals 
$J(\widetilde{Q}_G)$ of the poset $\widetilde{Q}_G$ from  
Definition \ref{poset-band};
the support of the height monomial of a matching in $L(\widetilde{G})$ is precisely 
the elements in the corresponding order ideal.
Since $L(\widetilde{G})$ is a subgraph of $L(G)$, 
the \emph{twist-parity condition} is satisfied.
\end{theorem}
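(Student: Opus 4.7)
The plan is to prove Theorem \ref{BandTheorem} by bootstrapping from Theorem \ref{twisttheorem}: we will identify the good matchings of $\widetilde{G}$ as a subset of the perfect matchings of $G$ and show that this subset is exactly the set of order ideals of $\widetilde{Q}_G$ (viewed as a subset of $J(Q_G)$). Concretely, Theorem \ref{twisttheorem} gives a bijection $P \mapsto I_P \in J(Q_G)$, where $I_P$ is the index set of tiles enclosed by $P \cup P_-$, so it suffices to show that $P$ descends to a good matching of $\widetilde{G}$ if and only if $I_P$ is an order ideal of $\widetilde{Q}_G$.

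First I would use Remark \ref{descend} to reformulate the goodness condition. If $P$ is a perfect matching of $G$, then after identifying $x \sim x'$ and $y \sim y'$, the vertex $x$ must be matched exactly once, which forces $P$ to contain at least one of the two gluing edges $(x,y) \in G_1$ or $(x',y') \in G_n$. Conversely, if $P$ contains at least one of these two edges, then deleting it yields a matching of $\widetilde{G}$, and a direct check of the three possibilities ($P$ contains both edges; only $(x,y)$; only $(x',y')$) shows that the resulting matching always satisfies the goodness condition of Definition \ref{def good matching} — in the first case, $x$ is matched to $y$ through the cut edge, and in the other two cases, both $P(x)$ and $P(y)$ lie on the same side of the cut edge. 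So the good matchings of $\widetilde{G}$ biject with the perfect matchings $P$ of $G$ with $(x,y) \in P$ or $(x',y') \in P$.

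Next I would translate this condition into a condition on $I_P$. Since $P_-$ consists of boundary edges and the tile $G_1$ (resp.\ $G_n$) has positive (resp.\ well-defined) relative orientation, whether $P_-$ contains $(x,y)$ depends only on the case of Figure \ref{fig band} we are in, and similarly for $(x',y')$. Twisting the tile $G_1$ corresponds to toggling $1 \in I_P$ and swaps whether $(x,y)$ is in the matching; similarly for $n$ and $G_n$. Thus
\[
(x,y) \in P \iff \big[1 \in I_P \iff (x,y) \notin P_-\big],
\]
and analogously for $(x',y')$ with $n$ in place of $1$. The main step is a case-by-case check of the four geometric configurations in Figure \ref{fig band}: one verifies that in Cases 1 and 2 we have $(x,y) \notin P_-$ and $(x',y') \in P_-$, while in Cases 3 and 4 we have $(x,y) \in P_-$ and $(x',y') \notin P_-$. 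Then the condition ``$P$ contains at least one of $(x,y), (x',y')$'' becomes, respectively,
\[
1 \in I_P \ \text{ or } \ n \notin I_P \qquad \text{(Cases 1,2)}, \qquad
1 \notin I_P \ \text{ or } \ n \in I_P \qquad \text{(Cases 3,4)},
\]
which is exactly the condition that $I_P$ respects the extra relation $1 > n$ (resp.\ $1 < n$). This is the main obstacle in the proof, and the only step that truly requires inspecting the figure.

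Finally I would assemble the conclusion. Because $\widetilde{Q}_G$ is obtained from $Q_G$ by imposing one additional cover relation, $J(\widetilde{Q}_G)$ is a distributive sublattice of $J(Q_G)$, and the identification above shows that it coincides with $L(\widetilde{G})$ as a subposet of $L(G)$; the twist edges of $L(G)$ that remain inside $L(\widetilde{G})$ correspond exactly to the covering relations in $J(\widetilde{Q}_G)$. The statement about the support of the height monomial is inherited from Theorem \ref{twisttheorem}, since each good matching of $\widetilde{G}$ has the same height monomial as its lift in $G$ (both edges $(x,y)$ and $(x',y')$ contribute trivially to the height), and the twist-parity condition is inherited because $L(\widetilde{G}) \subseteq L(G)$ and the parity convention in Theorem \ref{twisttheorem} depends only on the tile index.
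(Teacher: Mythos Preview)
Your approach is essentially the same as the paper's: both identify the good matchings of $\widetilde{G}$ with the perfect matchings of $G$ containing at least one of the two gluing edges $(x,y)$, $(x',y')$, translate this via Theorem~\ref{twisttheorem} into a condition on the order ideal $I_P$, and match that condition with the extra relation defining $\widetilde{Q}_G$. The paper carries this out explicitly only for Case~1 and asserts the other three are analogous.

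However, your case analysis contains two errors that happen to cancel. In Case~1 the paper checks (and Figure~\ref{band-case1} depicts) that the minimal matching $P_-$ \emph{does} contain $(x,y)$ and does \emph{not} contain $(x',y')$, the opposite of what you wrote. With the correct parities one gets $(x,y)\in P\iff 1\notin I_P$ and $(x',y')\in P\iff n\in I_P$, so the goodness condition becomes $1\notin I_P$ or $n\in I_P$. Separately, your identification of ``$1\in I_P$ or $n\notin I_P$'' with the relation $1>n$ is backwards: an order ideal respecting $1>n$ (i.e.\ $n<1$) must contain $n$ whenever it contains $1$, which is $1\notin I_P$ or $n\in I_P$; your displayed condition instead encodes $1<n$. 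Because you swapped both the membership of the gluing edges in $P_-$ and the direction of the implied relation, the final conclusion lands on the right relation, but the two displayed formulas and the sentence ``in Cases 1 and 2 we have $(x,y)\notin P_-$ and $(x',y')\in P_-$'' are incorrect as written and should be fixed.
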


\begin{proof}
While there are four cases to consider, 
the proofs in all cases are essentially the same, so 
we just give the proof in Case 1 --  the case that 
$G$ and $\widetilde{G}$ are as in the left of Figure \ref{band-case1}
(so in particular $G$ has an odd number of tiles).
\begin{figure}
\input{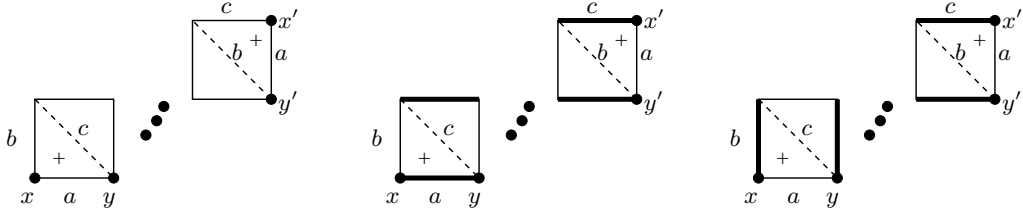}
\caption{Illustrating the proof of Theorem \ref{BandTheorem}}
\label{band-case1}
\end{figure}
Then the minimal matching of $G$ contains the edge
between $x$ and $y$, and does {\it not} use the edge
between $x'$ and $y'$, see the middle picture in Figure \ref{band-case1}.
Every perfect matching of $G$ descends to a good matching of 
$\widetilde{G}$ except those which do not use either the edge
between $x$ and $y$ or the edge between $x'$ and $y'$; see the right picture
in Figure \ref{band-case1}.  Therefore the perfect matchings of $G$ which 
do not descend to good matchings of $\widetilde{G}$ are precisely those
whose height monomial contains $w_1$ but not $w_n$.  Using the identification
of perfect matchings of $G$ with order ideals of $Q_G$, we see that the 
height monomials of good matchings of $\widetilde{G}$ can be identified with 
the order ideals of $Q_G$ which use the element $n$ whenever they use element $1$.
These are precisely the order ideals of $\widetilde{Q}_G$.
\end{proof}

See Figure \ref{fig:bandlattice}
for the lattice of good matchings of a band graph $\widetilde{G}$
obtained from the snake graph $G$ from Figure \ref{fig:lattice}
by identification of the vertices $x$ and $x'$, and $y$ and $y'$.
\begin{figure}[t]
\begin{centering}
\includegraphics[height=3in]{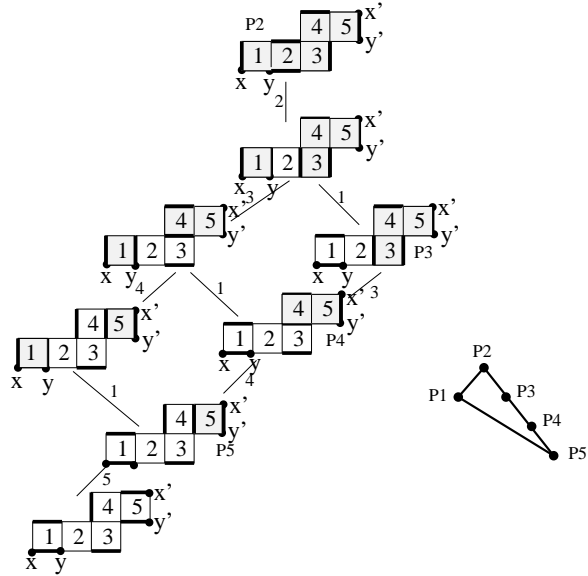}
\par\end{centering}
\caption{Lattice of good matchings of a band graph}
\label{fig:bandlattice}
\end{figure}

\begin{remark}
If $\mathcal{Q}_T$ is the quiver of the triangulation $T$, then each essential loop $\zeta$ defines a family of band modules $M_{\zl,k}(\zeta)$, $\zl\in\mathbb{P}^1,k\ge1$, over the corresponding Jacobian algebra, see \cite{BZ}. The band  is precisely the poset $Q$ and the lattice $L(G)$ is the lattice of string submodules of $M_{\zl,1}(\zeta)$ together with $M_{\zl,1}(\zeta)$.

The bangle $\Bang_k(\zeta)$ corresponds to the direct sum of $k$ copies of $M_{\zl,1}(\zeta)$. If the surface is a disk or an annulus, then the basis $\B^\circ$ corresponds to the generic basis in \cite{Dgeneric,GLS}.

On the other hand, the bracelet $\Brac_k(\zeta)$ does not have a module interpretation; it does \emph{not} correspond to the band module $M_{\zl,k}(\zeta)$.
\end{remark}

Finally we turn to the proof of Theorem \ref{g-loops}.
\begin{proof}
Let $\widetilde{B} = \widetilde{B_T}$ be the extended exchange matrix.
Note that if any two cluster algebra elements $z_1$ and $z_2$
satisfy the conditions of Theorem \ref{g-loops}, then so does
$z_1 z_2$.  Therefore it suffices to prove Theorem \ref{g-loops} for
cluster variables, and the cluster algebra elements associated to 
essential loops and bracelets.  Theorem \ref{g-loops} for cluster variables follows
from Proposition \ref{g} and the fact that the $F$-polynomials
of cluster variables from surfaces have constant term $1$
(see \cite[Section 13.1]{MSW}).  

By Definition \ref{def closed loop}, 
each cluster algebra element associated to a closed loop  is a
generating function for the good matchings of a band graph.
By Theorem 
\ref{BandTheorem},
there is a sequence of twists from the minimal matching $P_-$ to any 
other good matching $P$ of a band graph, where every twist is a cover 
relation going up in the poset.
Moreover, the twist-parity condition holds: 
along this path, each twist on a tile of positive (respectively, negative)
relative orientation 
will replace horizontal 
edges by vertical edges (respectively, vertical edges by horizontal ones).  
Finally, suppose that $P_2$ is a good matching obtained from $P_1$
by such a twist on tile $G_i$.  Then it follows from our construction
of band graphs 
that the exponent vector of $x(P_2) y(P_2)$ is equal to 
the exponent vector of $x(P_1) y(P_1)$ 
plus the $i$th column of $\widetilde{B}$. 

Note that similar arguments, together with Theorem \ref{twisttheorem},
give a new proof of Theorem \ref{g-loops} for cluster variables
associated to arcs.
\end{proof}
\section{The $\gg$-vector map and linear independence of $\B^\circ$ and $\B$}\label{sec:g}

By Theorem \ref{g-loops} and Remark \ref{rem g}, 
each element of $\B$ and $\B^0$ is homogeneous with respect to 
the $\gg$-vector grading.  The same is true for any product
of elements from $\B$ and $\B^0$.  This
allows us to 
extend the definition of $\gg$-vector to 
all elements of $\B$ and $\B^0$ (and to all products of such elements).

\begin{definition}\label{g-def}
The \emph{$\gg$-vector} of any 
element $x_C$ of $\B$ or $\B^0$,
with respect to the seed
$T$, is the multidegree 
of $x_C$,
using the $\gg$-vector grading.  
Additionally, for every collection $x_j, j\in J$ of elements 
 of $\B$ (or $\B^0$, respectively),
we define $\gg(\prod_j x_j) = \sum_j \gg(x_j)$.
\end{definition}

In Theorem \ref{g-loops}, we have shown that every element of $\B^\circ$ and $\B$ has a unique leading term. For arcs and essential loops, this leading term is given by the minimal matching $P_-$ of the corresponding snake graph. Therefore, we can compute its $\gg$-vector as follows.

\begin{prop}\label{leadingterm}
Let $\gamma$ be an arc or an essential loop.
Then
$x_{\gamma}$ 
has a unique Laurent monomial $\frac{x(P_{-})}{\cross(T, \gamma)}$ which is not divisible by 
any coefficient variable $y_{\tau_i}$.
Moreover, $$\gg(x_{\gamma}) = 
\deg\left(\frac{x(P_{-})}{\cross(T, \gamma)}\right),$$
where $P_{-}$ is the minimal matching of the snake or band graph
associated to $\gamma$ and $T$, 
and $\cross(T,\gamma)$ is the corresponding
crossing monomial. \qed
\end{prop}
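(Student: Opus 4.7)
My plan is to extract the proposition directly from the matching-sum formula for $x_\gamma$, combined with the previously established Theorem \ref{g-loops}. By Definition \ref{def:matching} (respectively Definition \ref{def closed loop}), when $\gamma$ has no contractible kink the expansion reads
$$x_\gamma \;=\; \frac{1}{\cross(T,\gamma)} \sum_{P} x(P)\, y(P),$$
where $P$ ranges over perfect matchings of the snake graph $G_\gamma$, or, in the loop case, over good matchings of the band graph $\widetilde{G}_\gamma$. The first key observation is that $y(P_-)=1$: by Definition \ref{height} the height monomial is indexed by the tiles enclosed by $P_-\ominus P$, and for $P=P_-$ this symmetric difference is empty. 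Hence the $P_-$-summand contributes exactly the Laurent monomial $x(P_-)/\cross(T,\gamma)$, which involves no coefficient variables.

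Next I would show that every other matching produces a term divisible by some $y_{\tau_i}$. If $P\neq P_-$, the symmetric difference $P_-\ominus P$ is nonempty, and by Lemma \ref{thm y} it is the boundary of a nontrivial union of cycles enclosing at least one tile of the snake graph, so the index set $J$ in Definition \ref{height} is nonempty and $y(P)\neq 1$. In the band-graph case, the same statement transfers via Remark \ref{descend}, which identifies good matchings of $\widetilde{G}_\zeta$ with a distinguished subset of the perfect matchings of $G_{\tilde\zeta}$ and sends the minimal good matching to the minimal perfect matching. Because the matching sum has all coefficients equal to $+1$, and the contributions from matchings $P\neq P_-$ are each divisible by some $y_{\tau_i}$, no cancellation can interfere with the $P_-$-term; even collisions among monomials at nonzero ``$y$-grading'' only reinforce each other. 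Consequently $x(P_-)/\cross(T,\gamma)$ is the unique Laurent monomial appearing in $x_\gamma$ that is free of coefficient variables.

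For the $\gg$-vector statement, I would invoke Theorem \ref{g-loops}, which guarantees that every term in the Laurent expansion of $x_\gamma$ is obtained from the leading term by adding non-negative integer combinations of columns of $\widetilde{B_T}$; by Remark \ref{rem g}, each such column has $\gg$-degree zero, so $x_\gamma$ is homogeneous in the $\gg$-vector grading. The common multidegree can therefore be read off from any chosen summand, and in particular from the $P_-$-term, giving
$$\gg(x_\gamma)\;=\;\deg\!\left(\frac{x(P_-)}{\cross(T,\gamma)}\right).$$
The only step needing real attention is confirming that, for $P\neq P_-$ a good matching of a band graph, the symmetric difference with $P_-$ still encloses at least one tile; but this reduces, via Remark \ref{descend}, to the snake-graph statement of Lemma \ref{thm y} applied to the cover $G_{\tilde\zeta}$, so it is not a genuine obstacle.
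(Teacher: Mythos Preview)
Your proof is correct and follows essentially the same approach as the paper, which in fact leaves the proposition without an explicit proof (note the \qed\ immediately following the statement): the preceding paragraph simply observes that Theorem~\ref{g-loops} already guarantees a unique leading term, and that for arcs and essential loops this term is furnished by the minimal matching $P_-$. You have spelled out the details the paper leaves implicit---namely that $y(P_-)=1$ because the symmetric difference is empty, and that $y(P)\neq 1$ for $P\neq P_-$ by Lemma~\ref{thm y}---but the underlying logic is the same.
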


\begin{lemma}\label{skein-g}
Let $c_1$ and $c_2$ be  arcs or essential loops, and 
consider the skein relation in $\A$ which writes
$x_{c_1} x_{c_2} = \sum_i Y_i M_i$, where the $M_i$'s are elements
of $\B^{\circ}$
 and the $Y_i$'s are monomials
in coefficient variables $y_{\tau_j}$. 
Then there is a unique $j$ such that $Y_j=1$.  As a consequence,
for each $i \neq j$, the exponent vector of 
$M_i$ is obtained from the exponent vector of $M_j$ by 
adding a non-negative linear combination of columns of 
$\widetilde{B_T}$.
We call the element $M_j$ the \emph{leading term}
in the skein relation
$x_{c_1} x_{c_2} = \sum_i Y_i M_i$.
\end{lemma}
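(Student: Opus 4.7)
The plan is to compare leading Laurent monomials (those not divisible by any coefficient variable $y_{\tau_k}$) on the two sides of $x_{c_1} x_{c_2} = \sum_i Y_i M_i$, using Theorem \ref{g-loops} on each side.

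First, Theorem \ref{g-loops} applied to the product $x_{c_1} x_{c_2}$, regarded as a product of two elements of $\B^\circ$, produces a unique Laurent monomial $\x^g$ in its expansion that is not divisible by any $y_{\tau_k}$, and every other monomial has the form $\x^{g+h}$ for some non-negative integer combination $h$ of columns of $\widetilde{B_T}$. On the right-hand side, Theorem \ref{g-loops} applied to each individual $M_i \in \B^\circ$ gives a unique leading monomial $\x^{g_i}$ with coefficient $+1$ (coming from the minimal matching in Definitions \ref{def:matching}--\ref{def closed loop}) together with subleading terms $\x^{g_i+h'}$ for $h' \geq 0$. Since $Y_i$ is a monomial purely in the coefficient variables, the summand $Y_i M_i$ contributes a $y$-free term if and only if $Y_i = 1$, in which case that contribution is precisely $\x^{g_i}$ with coefficient $+1$. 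Matching the $y$-free parts on both sides yields
\[
\x^g \;=\; \sum_{i \,:\, Y_i = 1} \x^{g_i},
\]
an identity in which every coefficient on the right is $+1$. Since the left-hand side is a single Laurent monomial, the right-hand sum must consist of exactly one summand, giving a unique index $j$ with $Y_j = 1$ (and necessarily $g_j = g$).

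For the consequence, fix any $i \neq j$. The leading monomial of $Y_i M_i$ is $\x^{g_i + y(Y_i)}$, where $y(Y_i)$ denotes the exponent vector of $Y_i$ (supported in the coefficient-variable coordinates). Because this monomial also appears in the Laurent expansion of $x_{c_1} x_{c_2}$, by the first step it must equal $\x^{g + h_i}$ for some non-negative combination $h_i$ of columns of $\widetilde{B_T}$. Substituting $g_j = g$, the exponent vector of the leading monomial of $Y_i M_i$ (equivalently, the leading-exponent vector of $M_i$ shifted by $y(Y_i)$) is obtained from that of $M_j$ by adding the non-negative combination $h_i$, which is exactly the claimed relation.

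The principal obstacle is to justify the positivity used in the no-cancellation argument: one must ensure that when the skein relations of Theorem \ref{th:skein1} are iterated and the resulting simple multicurves are reorganized into elements of $\B^\circ$, the leading ($y$-free) coefficients all come out positive, so that the identity extracted above really is a sum of $+1$ terms. I would handle this by a specialization argument patterned on the proof of Proposition \ref{prop Cheby}: specialize the $x_{\tau}$'s to $1$ and the $y_{\tau}$'s to $0$, so that each surviving leading contribution reduces to the positive integer $1$ (the number of minimal matchings of the corresponding graph), after which the equality $1 = 1 + 1 + \cdots$ of positive integers, with left side a single term, forces exactly one summand on the right.
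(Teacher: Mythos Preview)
Your overall idea---compare the $y$-free terms on both sides using the specialization $x_\tau\mapsto 1$, $y_\tau\mapsto 0$---is exactly the mechanism the paper uses, but there is a genuine gap in how you deploy it.

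The skein relations of Theorem~\ref{th:skein1} carry signs: each step reads $H_1=\pm Y_2 H_2\pm Y_3 H_3$. Iterating, the full resolution is really $x_{c_1}x_{c_2}=\sum_i \epsilon_i Y_i M_i$ with $\epsilon_i\in\{\pm 1\}$. Your specialization then yields only
\[
1=\sum_{i:\,Y_i=1}\epsilon_i,
\]
i.e.\ the \emph{signed} count of trivial-coefficient terms is $1$; this does not preclude, say, $1=1+1-1$. Your justification that ``each surviving leading contribution reduces to the positive integer $1$ (the number of minimal matchings)'' addresses the coefficient of the leading monomial \emph{inside} $M_i$, not the sign $\epsilon_i$ coming from the skein relations---these are two different $+1$'s, and it is the latter that is at issue. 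The same problem infects your argument for the consequence: the assertion that the leading monomial of $Y_iM_i$ ``also appears in the Laurent expansion of $x_{c_1}x_{c_2}$'' presupposes no cancellation on the right-hand side, which you have not established.

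The paper closes this gap by working one smoothing at a time rather than globally. For a single step $H_1=\pm Y_2 H_2\pm Y_3 H_3$, an external geometric input (\cite[Lemma~7]{dylan}) guarantees that at least one of $Y_2,Y_3$ equals $1$; the specialization $x\mapsto 1$, $y\mapsto 0$ then rules out both being $1$ (else $1=\pm 1\pm 1$) and simultaneously pins down the sign on the surviving trivial-coefficient term as $+$. Induction on the number of crossings then propagates both the uniqueness and the positive sign through the entire resolution tree. For the consequence, the paper avoids your monomial-tracking argument altogether: once each single step is an identity between $\gg$-homogeneous elements (Theorem~\ref{g-loops}), every $Y_iM_i$ is forced to have the same $\gg$-degree as $x_{c_1}x_{c_2}=M_j$, which gives the claimed relation directly.
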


\begin{proof}
The key to the proof is the observation that every skein relation which expresses
a product of crossing arcs or loops 
in terms of arcs and loops which do not cross
has a unique term on the right-hand-side with no coefficient 
variables. Once we have proved this observation, the existence and uniqueness of $j$ follows.
The relationship between $\gg(M_i)$ and $\gg(M_j)$ is then 
a consequence of the fact that  elements of 
$\B^{\circ}$ are homogeneous with respect to the 
$\gg$-vector grading (see Theorem \ref{g-loops}),
which implies that every term in the equation
$x_{c_1} x_{c_2} = \sum_i Y_i M_i$
must have the same $\gg$-vector.

It remains to show the observation above.  Theorem \ref{th:skein1} implies that the skein relations   have the form
\[H_1 = \pm Y_2 H_2 \pm Y_3 H_3,\]
where $Y_2$ and $Y_3$ are monomials in the coefficient variables and
each $H_i$ represents the product of one or two cluster algebra elements,
where those elements are given by our snake and band graph formulas.
In particular, each $H_i$ is in $\Z[x_i^{\pm 1}, y_i]$,
has all coefficients positive, and has a unique term that is not divisible by any of the $y_i$.

It follows from \cite[Lemma 7]{dylan} and Theorem \ref{th:skein1}, that at least one of $Y_2$ and $Y_3$ is equal to 1.
For the sake of contradiction, suppose that both of them are equal to 1.
In that case we have
\[H_1 = \pm H_2 \pm H_3.\]
It is impossible that we have two negative signs on the right hand side,
but we may have one negative sign.
So either $H_1 = H_2 + H_3$ or $H_1 + H_2 =H_3.$ 
Both cases are equivalent after permuting indices, so let
 us suppose without loss of generality that $H_1 + H_2 = H_3.$
Then if we set all the cluster variables equal to 1, and all the coefficient variables equal to 0,
 we get 1 = 1+1.  That is a contradiction.
\end{proof}

\begin{prop}\label{same-g}
Let $\gamma$ be an essential loop in $(S,M)$.  Then
$\Brac_k (\gamma)$ and $\Bang_k (\gamma)$ have the same
$\gg$-vector.
\end{prop}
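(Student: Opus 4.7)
The plan is to reduce the statement to Proposition \ref{prop Cheby}, which expresses $x_{\Brac_k \gamma}$ as the normalized Chebyshev polynomial $T_k(x_\gamma)$, and to combine it with the uniqueness statement for coefficient-free Laurent monomials in Theorem \ref{g-loops}. The bangle side is immediate: since $\Bang_k \gamma$ is by definition a disjoint union of $k$ isotopic copies of $\gamma$, one has $x_{\Bang_k \gamma} = (x_\gamma)^k$, and Definition \ref{g-def} then gives $\gg(x_{\Bang_k \gamma}) = k\,\gg(x_\gamma)$.

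For the bracelet side, I would first unfold the Chebyshev polynomial via the recursion of Proposition \ref{chebyrec} (or the closed form in Proposition \ref{prop cheb}) to write
$$T_k(x_\gamma) \;=\; x_\gamma^k \;+\; \sum_{j \geq 1} c_j\, Y_\gamma^j\, x_\gamma^{k-2j},$$
with $c_j \in \Z$ and $Y_\gamma = \prod_{\tau \in T} y_\tau^{e(\gamma,\tau)}$. Because $\gamma$ is an essential loop in the unpunctured surface $(S,M)$ and the complement of $T$ in $S$ is a disjoint union of topological triangles, $\gamma$ must cross at least one arc of $T$. Hence $Y_\gamma$ is a non-trivial monomial in the coefficient variables, and every summand with $j \geq 1$ in the display above is divisible by some $y_\tau$.

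Now I would apply Theorem \ref{g-loops} to the product $x_\gamma^k$: it contains a unique Laurent monomial $\x^g$ not divisible by any coefficient variable, and $g = k\,\gg(x_\gamma)$. Combined with the previous paragraph, this shows that $x_{\Brac_k \gamma} = T_k(x_\gamma)$ likewise has exactly one coefficient-free Laurent monomial, namely $\x^{k\,\gg(x_\gamma)}$. A second application of Theorem \ref{g-loops}, now to $x_{\Brac_k \gamma} \in \B$, lets us read off its $\gg$-vector from this unique monomial, yielding $\gg(x_{\Brac_k \gamma}) = k\,\gg(x_\gamma) = \gg(x_{\Bang_k \gamma})$. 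The only step requiring even mild attention is verifying $Y_\gamma \neq 1$, which is the simple topological observation above; no serious obstacle is anticipated.
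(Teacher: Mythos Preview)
Your proof is correct and follows essentially the same route as the paper's: both compute $\gg(\Bang_k\gamma)=k\,\gg(x_\gamma)$ directly, then use Proposition~\ref{prop Cheby} together with the Chebyshev expansion (the paper cites Proposition~\ref{prop cheb}, you invoke the equivalent recursion of Proposition~\ref{chebyrec}) to see that $T_k(x_\gamma)-x_\gamma^k$ is divisible by $Y_\gamma$, and conclude via Theorem~\ref{g-loops}. Your write-up is in fact more explicit than the paper's two-line proof, in particular you spell out the check that $Y_\gamma\neq 1$, which the paper leaves implicit.
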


\begin{proof}
On one hand, we have
\[\gg(\Bang_k \zg) = \gg(x_\zg^k) =k \gg(x_\zg).\]
On the other hand, $\gg(\Brac_k(\zg)) {=}\gg(T_k(x_\zg))$, by Proposition {\ref{prop Cheby}}, and  the result follows from Proposition \ref{prop cheb}.

\end{proof}

Let $e_i$ denote the element of $\Z^n$ with a $1$ in the 
$i$th place and $0$'s elsewhere.
Let $(\tau_1,\dots,\tau_n)$ denote the elements of the initial
  triangulation $T$.  By definition of $\gg$-vectors,
$\gg(x_{\tau_i}) = e_i$ for all $i$.
We now construct
an  element of $\A$ whose
$\gg$-vector is $-e_i$, for each $1 \leq i \leq n$.  

\begin{figure}
\scalebox{0.8}{\input{figlemwo2.pstex_t}}
\caption{The arc $\tbar_i$}\label{figinvert}
\end{figure}

\begin{prop}\label{invert}
Let $i$ be an integer between $1$ and $n$.  Then there exists
an arc $\tbar_i$ of $(S,M)$, such that 
$\gg(x_{\tbar_i})=-e_i$.  The 
arc $\tbar_i$ is constructed
as follows:
Suppose that $\tau_i$ is an arc between two marked points $x$ and $y$,
and let $d_1$ and $d_2$ denote the boundary 
segments such that  $d_1$ is incident to $x$ and is in the 
clockwise direction from $\tau_i$, and $d_2$ is incident to $y$
and is in the clockwise direction from $\tau_i$.  Let $x'$ and $y'$
be the other endpoints of $d_1$ and $d_2$, besides $x$ and $y$.
Let $\tbar_i$ be the arc of $(S,M)$ between points $x'$ and 
$y'$, which is homotopic to the concatenation of 
$d_2, \tau_i, d_1$.    See Figure \ref{figinvert}.
\end{prop}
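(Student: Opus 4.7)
The plan is to identify the leading Laurent monomial of $x_{\bar\tau_i}$ with respect to the cluster from $T$, and then apply Proposition \ref{leadingterm}.

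First I would verify that $\bar\tau_i$ is a well-defined arc. The outgoing directions of $\tau_i$ at its two endpoints $x,y$ point oppositely along $\tau_i$, so clockwise rotation at $x$ starting from $\tau_i$ enters one triangle adjacent to $\tau_i$, while clockwise rotation at $y$ starting from $\tau_i$ enters the other. Hence $d_1$ and $d_2$ lie on opposite sides of $\tau_i$, and the path $d_2\,\tau_i\,d_1$ pushed slightly into the interior of $S$ yields a non-trivial arc crossing $\tau_i$ exactly once.

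Next I would describe the snake graph $G_{\bar\tau_i}$. As $\bar\tau_i$ rounds the corner at $x$ from $\tau_i$ to $d_1$, it crosses every arc of $T$ incident to $x$ strictly between $\tau_i$ and $d_1$ in the clockwise cyclic order at $x$; call these arcs $\alpha_1,\ldots,\alpha_k$. Similarly, let $\beta_1,\ldots,\beta_l$ be the corresponding intermediate arcs at $y$. Then
\[\cross(T,\bar\tau_i) = x_{\tau_i}\prod_{j=1}^{k} x_{\alpha_j}\prod_{j=1}^{l} x_{\beta_j},\]
and $G_{\bar\tau_i}$ is a snake graph with $1+k+l$ tiles, in which consecutive tiles are glued along the third sides of the triangles shared by consecutive crossings.

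The central step is to identify the minimal matching $P_-$ of $G_{\bar\tau_i}$. I claim that $P_-$ consists exactly of the edges labeled $d_1,\alpha_1,\ldots,\alpha_k,\beta_1,\ldots,\beta_l,d_2$, which lie along one outer side of the snake graph. Since $d_1,d_2$ are boundary segments (weight $1$) while each $\alpha_j,\beta_j$ contributes its cluster variable, we get $x(P_-) = \prod_j x_{\alpha_j}\prod_j x_{\beta_j}$, and hence
\[\frac{x(P_-)}{\cross(T,\bar\tau_i)} = \frac{1}{x_{\tau_i}}.\]
By Proposition \ref{leadingterm}, $\gg(x_{\bar\tau_i}) = -e_i$.

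The main obstacle is verifying this last claim about $P_-$: one must check that the proposed edges form a perfect matching using only boundary (non-gluing) edges of $G_{\bar\tau_i}$, and then confirm via the orientation convention of Section \ref{sect def loops} that this matching is $P_-$ rather than the maximal matching $P_+$. The combinatorics of how the tiles for the $\alpha_j$ and $\beta_j$ fan out and abut the central tile for $\tau_i$ near the endpoints $x,y$ is what makes this verification tractable, and the case when $\tau_i$ sits in two triangles whose other four sides are boundary segments (so $k=l=0$) provides the clean model for the general pattern.
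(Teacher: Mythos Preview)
Your approach is correct in outline but takes a genuinely different and more laborious route than the paper. The paper's proof is a two-line argument using the skein relation: smoothing the single crossing of $\tau_i$ and $\bar\tau_i$ yields, in one direction, the pair $\{d_1,d_2\}$ of boundary segments (contributing $1$), and in the other, a pair of arcs $\{r,s\}$; thus $x_{\tau_i} x_{\bar\tau_i} = Y\,x_r x_s + 1$ for some coefficient monomial $Y$. Since both sides are homogeneous for the $\gg$-grading and the term $1$ has degree $0$, one concludes $\gg(x_{\tau_i}) + \gg(x_{\bar\tau_i}) = 0$, i.e.\ $\gg(x_{\bar\tau_i}) = -e_i$. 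No snake graph computation is needed.

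Your direct approach via $G_{\bar\tau_i}$ and Proposition~\ref{leadingterm} can in principle also work, but the gap you flag is real and not as easily filled as your last paragraph suggests. First, your description of $P_-$ is ambiguous: in the zigzag portion of the snake graph corresponding to the fan at $x$, each label $\alpha_j$ with $1<j<k$ appears on \emph{two} distinct boundary edges (one on the tile for $\alpha_{j-1}$ and one on the tile for $\alpha_{j+1}$), and you do not say which one you mean. Second, even granting that your list of labels determines one of the two boundary-only matchings, deciding that it is $P_-$ rather than $P_+$ requires tracking the relative-orientation convention through the entire zigzag, and the case $k=l=0$ does not model this, since the orientation flips along the zigzag are precisely what is at issue. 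The paper's skein argument sidesteps all of this: the boundary-segment resolution forces the constant term $1$, which pins down the $\gg$-vector immediately.
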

\begin{proof}
  Let $r$ and $s$ be the arcs
in $(S,M)$ from $x$ to $y'$ and $x'$ to $y$, respectively, obtained
by resolving the crossing between $\tbar_i$ and $\tau_i$.
Then we have the exchange relation
$x_{\tau_i} x_{\tbar_i} = Y x_r x_s + 1$, where $Y$
is a monomial in $y_{\tau_j}$'s.  Note that the term $1$ comes
from  the two boundary segments obtained by resolving the crossing between $\tbar_i$ and $\tau_i$
in the other direction.
Since cluster variables are homogeneous elements with respect
to the $\gg$-vector grading, it follows that 
$\gg(x_{\tau_i} x_{\tbar_i}) = 0$.
It follows that 
$\gg(x_{\tbar_i}) = -\gg(x_{\tau_i}) = -e_i$, as desired.
\end{proof}
\begin{remark}
In the corresponding cluster category, the arc $\tbar_i$ corresponds to to the Auslander-Reiten translate of the arc $\tau_i$, see \cite{BZ}.
\end{remark}

\subsection{Fans}
Let $T$ be a triangulation and $\zg$ be an arc or a closed loop.
Let $\zD$ be a triangle in $T$ with sides $\zb_1,\zb_2$, and  $\tau$, that is crossed by $\zg$ in the following way: $\zg$ crosses  $\zb_1 $ at the point $p_1$ and crosses $\zb_2$ at the point $p_2$, and the segment of  $\zg$ from $p_1$ to $p_2$ lies entirely in $\zD$, see the left of Figure \ref{figv}. Then there exists a unique vertex $v$ of the triangle $\zD$ and a unique contractible closed curve $\ze$ given as the homotopy class of a curve starting at the point $v$, then following $\zb_1$
until the point $p_1$, then following  $\gamma$ until the point $p_2$ and then following $\zb_2$ until $v$. We will use the following notation to describe this definition: 

\[\epsilon =\xymatrix{ v \ar@{-}[r]^{\zb_1} &p_1\ar@{-}[r]^\zg & p_2 \ar@{-}[r]^{\zb_2} &v}.\]

\begin{definition} \label{def fan}
A \emph{$(T,\gamma)$-fan with vertex $v$} a collection of arcs $\beta_0,\beta_1,\ldots,\beta_k$, with $\beta_i\in T$ and $k\ge 0$ with the following properties (see the right of Figure \ref{figv}):
\begin{enumerate}
\item $\gamma$ crosses $\beta_0,\beta_1,\ldots,\beta_k$ in order at the points $p_0,p_1,\ldots, p_k$, such that $p_i$ is a crossing point of $\gamma$ and $\beta_i$, and the segment of $\zg$ from $p_0$ to $p_k$ does not have any other crossing points with $T$;
\item each $\beta_i$ is incident to $v$;
\item for each $i<k$, let $\ze_i$  be the unique contractible closed curve given by 
\[\xymatrix{ v \ar@{-}[r]^{\zb_i} &p_i\ar@{-}[r]^-\zg & p_{i+1} \ar@{-}[r]^-{\zb_{i+1}} &v};\]
then 
 for each $i<k-1$, the concatenation of the curves $\ze_i\ze_{i+1}$ is homotopic to 
\[\xymatrix{ v \ar@{-}[r]^{\zb_i} &p_i\ar@{-}[r]^-\zg&p_{i+1}\ar@{-}[r]^-\zg & p_{i+2} \ar@{-}[r]^-{\zb_{i+2}} &v}.\]
\end{enumerate}
\end{definition}
Condition (3) in the above definition is equivalent
to the condition that \[\xymatrix{ v \ar@{-}[r]^{\zb_i} &p_i\ar@{-}[r]^-\zg & p_{i+2} \ar@{-}[r]^-{\zb_{i+2}} &v}\]
is contractible.
\begin{definition} A $(T,\gamma)$-fan
 $\beta_0,\beta_1,\ldots,\beta_k$ 
 is called \emph{maximal} if there is no arc $\za\in T$ such that 
 $\beta_0,\beta_1,\ldots,\beta_{k},\za$ or  $\za,\beta_0,\beta_1,\ldots,\beta_{k}$ is a  $(T,\gamma)$-fan.
 \end{definition}
 
 Every $(T,\zg)$-fan  $\zb_0,\zb_1,\ldots,\zb_k$ defines a triangle with simply connected interior whose vertices are $v,p_0,p_k$ and whose boundary is the contractible curve 
\[\xymatrix{ v \ar@{-}[r]^{\zb_0} &p_0\ar@{-}[r]^\zg & p_k \ar@{-}[r]^{\zb_k} &v}.\]
The orientation of the surface $S$ induces an orientation on this triangle, and we say that $\zb_0$ is the \emph{initial} arc  and $\zb_k$ is the \emph{terminal} arc of the fan, if going around the boundary of the triangle along the curve $\xymatrix{ v \ar@{-}[r]^{\zb_0} &p_0\ar@{-}[r]^\zg & p_k \ar@{-}[r]^{\zb_k} &v}$
is clockwise. In the fan $\tau_1,\tau_2,\tau_3,\tau_2$ in the example given on the right of Figure \ref{figfan}, the initial arc is $\tau_2$ and the terminal arc is $\tau_1$.

\begin{figure}\begin{center}
\input{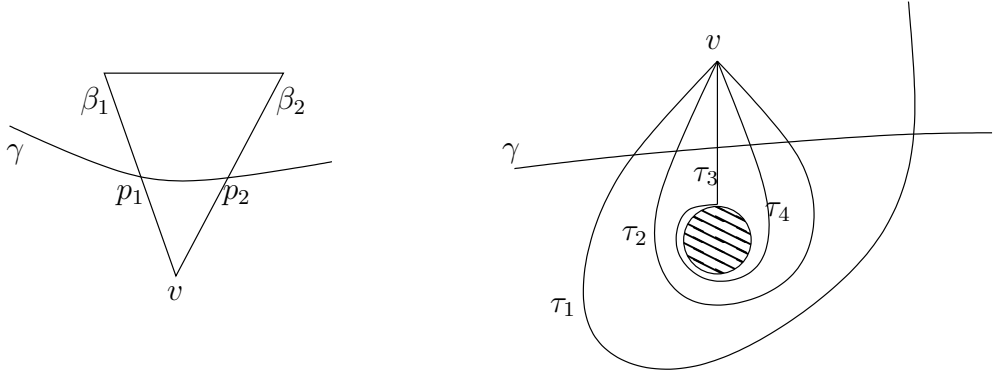}
\caption{Construction of $(T,\zg)$-fans (left). The fan $\tau_1,\tau_2,\tau_3,\tau_4,\tau_2$ (right) can not be extended to the right, because the configuration $\tau_1,\tau_2,\tau_3,\tau_4,\tau_2,\tau_1$ does not satisfy condition (3) of Definition \ref{def fan}}\label{figfan}\label{figv}\end{center}
\end{figure}

\subsection{Multicurves and leading terms}
Recall from Section \ref{sect span} that given any multicurve $\{\zg_1,\ldots,\zg_t\}$, 
we can always apply a series of 
smoothings to replace it with 
a union of simple multicurves, called the {\it smooth resolution} of  
$\{\zg_1,\ldots,\zg_t\}$.  
In the cluster algebra, taking the resolution of the multicurve $\{\zg_1,\zg_2,\ldots,\zg_t\}$ 
corresponds to applying skein relations to the product $x_{\zg_1}x_{\zg_2}\cdots x_{\zg_t}$ until the result is a linear combination of elements of $\B^\circ$. 
Also recall that by Lemma \ref{skein-g}, if we write the product $x_{\zg_1}x_{\zg_2}\cdots x_{\zg_t}$ as a linear combination of $\B^\circ$, then there is a unique term with trivial coefficient, 
say $x_{\za_1}x_{\za_2}\cdots x_{\za_s}$, which is called the \emph{leading term}.
We say that the multicurve $\{\za_1,\za_2,\ldots,\za_s\}$ is \emph{equivalent to the leading term of the resolution of}  $\{\zg_1,\zg_2,\ldots,\zg_t\}$. 
Note that any boundary segment $b$, which appears during the process, is not included in the multicurves, since the corresponding element $x_b$ in the cluster algebra is equal to $1$.

\subsection{An inverse for the $\gg$-vector map} \label{sec:g-inverse}
In this subsection, we use the $(T,\zg)$-fans to prove that the $g$-vector map is a bijection between $ \B^{\circ}$ and $\ZZ^n$. We will define a map $f:\Z^n \to \B^{\circ}$ and show that it is the inverse of the $\gg$-vector map. Recall that for an arc $\tau_i$, we denote by $\tbar_i$ the unique arc    whose $g$-vector is  $-e_i$. 

\begin{definition}
Let $v=(v_1,\dots,v_n)\in \Z^n$, and write it uniquely
as $v=\sum_i r_i e_i + \sum_j s_j (-e_j)$,
where $i$ ranges over all coordinates of $v$ with $v_i >0$,
and $j$ ranges over all coordinates of $v$ with $v_i < 0$.
So $r_i = v_i >0$, and $s_j = -v_j >0$.
Then use the skein relations to write
$\prod_i (x_{\tau_i})^{r_i} \prod_j (x_{\taubar_i})^{s_i}$
as a linear combination of elements in $\B^{\circ}$.
Define $f(v)$ to be the leading monomial in this sum,
as defined by Lemma \ref{skein-g}.
\end{definition}

\begin{lemma}\label{lem gf}
The composition $\gg \circ f$ is the identity map from 
$\Z^n$ to itself, and so $\gg$ is surjective and $f$ is injective.
\end{lemma}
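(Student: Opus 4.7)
The plan is to establish that $\gg(f(v)) = v$ for every $v \in \Z^n$; the injectivity of $f$ and the surjectivity of $\gg$ are then immediate consequences. Given $v \in \Z^n$, write $v = \sum_i r_i e_i - \sum_j s_j e_j$ as in the definition of $f$, and set
$$P \;=\; \prod_i (x_{\tau_i})^{r_i} \prod_j (x_{\tbar_j})^{s_j}.$$
By the definition of the $\gg$-vector grading we have $\gg(x_{\tau_i}) = e_i$, and by Proposition \ref{invert} we have $\gg(x_{\tbar_j}) = -e_j$. Since the $\gg$-grading is additive on products of $\gg$-homogeneous elements, $P$ is $\gg$-homogeneous of degree $v$.

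Next, use the skein relations to expand $P = \sum_k Y_k M_k$ with each $M_k \in \B^{\circ}$ and each $Y_k$ a monomial in the coefficient variables $y_{\tau_i}$. By Lemma \ref{skein-g} there is a unique index $k = k_0$ with $Y_{k_0} = 1$, and by construction $f(v) = M_{k_0}$. Now apply Theorem \ref{g-loops} to each $M_k$: it has a unique Laurent monomial $x^{g_k}$ not divisible by any coefficient variable, whose exponent vector equals $g_k = \gg(M_k)$, and every other monomial of $M_k$ is obtained from $x^{g_k}$ by multiplying by a nontrivial monomial in the $y_{\tau_i}$.

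Consequently, every monomial of $Y_k M_k$ with $k \ne k_0$ is divisible by the nontrivial coefficient monomial $Y_k$, while $Y_{k_0} M_{k_0} = M_{k_0}$ contributes exactly one term $x^{g_{k_0}}$ that is free of coefficient variables. Hence $P$ contains exactly one Laurent monomial free of the $y_{\tau_i}$, namely $x^{g_{k_0}}$. On the one hand, the $\gg$-degree of this monomial is $g_{k_0}$ (it involves no $y$'s); on the other hand, because $P$ is $\gg$-homogeneous of degree $v$, this monomial's $\gg$-degree must equal $v$. Therefore $\gg(f(v)) = g_{k_0} = v$, giving $\gg \circ f = \mathrm{id}_{\Z^n}$, from which $f$ is injective and $\gg : \B^{\circ} \to \Z^n$ is surjective.

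The only non-routine input is the uniqueness of the coefficient-variable-free term in each $M_k \in \B^{\circ}$, supplied by Theorem \ref{g-loops} (which itself rests on the lattice structure of matchings from Section \ref{sec:lattice}); this uniqueness is precisely what rules out any cancellation among leading terms contributed by different summands $Y_k M_k$, and thereby lets us read off $\gg(f(v))$ directly from the homogeneity of $P$.
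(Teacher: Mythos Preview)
Your proof is correct and follows essentially the same approach as the paper's. The paper's proof is a one-liner: it asserts $\gg(f(v)) = \gg\bigl(\prod_i (x_{\tau_i})^{r_i} \prod_j (x_{\tbar_j})^{s_j}\bigr)$ and then applies Definition~\ref{g-def} to conclude this equals $v$. Your argument unpacks the implicit first equality by tracking the unique coefficient-free monomial via Theorem~\ref{g-loops} and Lemma~\ref{skein-g}; this is more explicit than necessary, since the equality $\gg(M_{k_0}) = \gg(P)$ already follows directly from $\gg$-homogeneity of $P$ together with $Y_{k_0}=1$, but the extra detail does no harm.
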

\begin{proof}
For $v \in \mathbb{Z}^n$, we have $\gg(f(v)) = 
\gg(\prod_i (x_{\tau_i})^{r_i} \prod_j (x_{\tbar_i})^{s_i})$,
thus, by Definition \ref{g-def}, $\gg(f(v)) = v$.
\end{proof}

\begin{lemma}\label{lem g1}
Let $\zg$ be an arc. Choose an orientation of $\zg$, and let $s$ be its starting point and $t$ its ending point. Denote by $\zd_s$ the arc that is clockwise from $s$ in the first triangle of $T$ that $\zg $ meets, and denote by $\zd_t$  the arc that is clockwise from $t$ in the last triangle that $\zg $ meets. 
Let $F_1,\ldots,F_\ell$ be the maximal $(T,\zg)$-fans ordered by the orientation of $\zg$ and let $\zs_i$ be the initial arc of $F_i$ and $\tau_i$ the terminal arc of $F_i$.
  \begin{enumerate}
\item If $\zg$ crosses the initial arc of $F_1$ first then $\zg$ is equivalent to the leading term in the resolution of the multicurve
\[ \{\zd_s,\zd_t, \sbar_i, \tau_i , \sbar_\ell \mid \textup{$i$ is an odd integer with $1\le i<\ell$}\}.
\]

\item If $\zg$ crosses the terminal arc of $F_1$ first then $\zg$ is equivalent to the leading term in the resolution of the multicurve
\[ \{\zd_s,\zd_t, \sbar_i, \tau_i , \sbar_\ell \mid \textup{$i$ is an even integer with $2\le i<\ell$}\}.
\]
\end{enumerate}

\end{lemma}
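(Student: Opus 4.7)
The plan is to proceed by induction on $\ell$, the number of maximal $(T,\gamma)$-fans, combining skein-resolution bookkeeping with $\gg$-vector computations. By Lemma \ref{skein-g}, the resolution of any multicurve has a unique leading term (i.e., one whose coefficient is free of the variables $y_\tau$), and this term's $\gg$-vector agrees with the sum of the $\gg$-vectors of the factors of the multicurve. Combining Proposition \ref{leadingterm} (which computes $\gg(x_\gamma)$ from the minimal matching of $G_\gamma$) with Proposition \ref{invert} (which gives $\gg(x_{\sbar_i})=-e_k$ whenever $\sigma_i=\tau_k$), a preliminary sanity check is to verify that the two sides of the claimed identity have matching $\gg$-vectors. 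The contribution of $\tau_i$ from each odd- (resp.\ even-) indexed fan, paired with the negative contribution of $\sbar_i$, exactly captures the ``turning'' tiles of $G_\gamma$ at the boundaries between consecutive fans, while $\delta_s$ and $\delta_t$ account for the two end tiles.

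For the base case $\ell=1$, the arc $\gamma$ sits inside a single maximal fan with vertex $v$, so both cases reduce to the multicurve $\{\delta_s,\delta_t,\sbar_1\}$. A direct application of Proposition \ref{Ptolemy} (the generalized Ptolemy relation), together with the explicit construction of $\sbar_1$ in Proposition \ref{invert}, shows that $x_\gamma$ arises as the unique leading term in the resolution. This uses the relation $x_{\tau_i}x_{\taubar_i}=Y\,x_r x_s+1$ from the proof of Proposition \ref{invert}, combined with the geometric observation that $\delta_s$ and $\delta_t$ seal off the two endpoints of $\gamma$.

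For the inductive step, peel off the last fan $F_\ell$: split $\gamma$ near the transition from $F_{\ell-1}$ to $F_\ell$ into an arc $\gamma'$ covering the first $\ell-1$ fans and a tail inside $F_\ell$. The multicurve $M'$ for $\gamma'$, given by the inductive hypothesis, is obtained from the stated multicurve $M$ by removing $\sbar_\ell$ and interchanging the roles of $\tau_{\ell-1}$ and $\sbar_{\ell-1}$ as dictated by the parity shift. A single Ptolemy relation between $x_{\gamma'}$ and $x_{\sbar_\ell}$ (or between $x_{\gamma'}$ and an appropriate neighboring arc of $T$) produces $x_\gamma$ as one of the terms on the right-hand side, and Lemma \ref{skein-g} together with the matching $\gg$-vector computation identifies it as the leading one.

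The main obstacle will be the parity bookkeeping that distinguishes cases (1) and (2): the alternation between odd- and even-indexed fans in $M$ reflects the relative orientation (Definition \ref{relorient}) of adjacent tiles of $G_\gamma$ at the boundaries between consecutive maximal fans, and must match the inductive splitting at each stage. I would isolate the parity explicitly for small $\ell$ and verify directly that the inductive step preserves it. Cases (1) and (2) are interchanged by reversing the orientation of $\gamma$, so it suffices to carry out the argument carefully in case (1).
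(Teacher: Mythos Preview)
Your overall strategy---induction on $\ell$, with the inductive step peeling off one maximal fan---is exactly the paper's. The paper peels off $F_1$ from the front rather than $F_\ell$ from the back, but that is a symmetric choice and either end would work. The base case $\{\delta_s,\delta_t,\sbar_1\}$ is also the same.

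Where your sketch falls short is in the mechanics of the inductive step. You write ``split $\gamma$ \dots into an arc $\gamma'$ \dots and a tail,'' and then ``a single Ptolemy relation between $x_{\gamma'}$ and $x_{\sbar_\ell}$ (or between $x_{\gamma'}$ and an appropriate neighboring arc of $T$)''---the parenthetical ``or'' shows you have not pinned down the relation, and Proposition~\ref{Ptolemy} is only the quadrilateral case; you need the general skein relations of Theorem~\ref{th:skein1}. The paper is concrete here: it smooths the crossing of $\gamma$ with $\sigma_1$ at the first crossing point $p_1$, observes that the leading term of that smoothing is the pair $\{\delta_s,\gamma'\}$ where $\gamma'$ is the single arc from the fan vertex $s'$ along $\sigma_1$ to $p_1$ and then along $\gamma$ to $t$. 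This $\gamma'$ misses $F_1$ entirely; its maximal fans are $F_2',F_3,\dots,F_\ell$ with $F_2'=F_2\setminus\{\tau_2\}$. One then checks directly that the multicurve $M$ in the statement equals $M'\cup\{\delta_s,\sbar_1\}$, where $M'$ is the multicurve the inductive hypothesis assigns to $\gamma'$ (noting $\delta_{s'}=\tau_1$), and that resolving $\{\delta_s,\sbar_1,\gamma'\}$ yields $\gamma$ as its leading term. There is no separate ``tail'' to manage.

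One further point: your claim that cases (1) and (2) are interchanged by reversing the orientation of $\gamma$ is not correct in general. The initial/terminal labels on each fan are fixed by the surface orientation, not by the direction of $\gamma$; reversing $\gamma$ only reverses the order of the fans. A short parity check shows that reversal swaps the two cases when $\ell$ is odd but preserves the case when $\ell$ is even. The paper's ``without loss of generality assume case (1)'' works because the inductive step for $\gamma'$ is allowed to land in either case, not because of an orientation-reversal symmetry.
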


\begin{proof}
We may assume without loss of generality that $\zg$ crosses the initial arc of $F_1$ first. 
Note first that $\zs_i=\zs_{i+1} $  for all even $i<\ell$, and $\tau_i=\tau_{i+1}$ for all odd $i<\ell$.
We proceed by induction on $\ell$. Suppose first that $\ell=1$. Then $\{\zd_s,\zd_t,\sbar_1\}$ is the multicurve shown on the left of Figure \ref{fig lemg1}, where boundary segments are labeled $b$.

The leading term of the resolution of this multicurve is shown on the right   of Figure \ref{fig lemg1}, and we see that it  is equivalent to $\zg$.

 \begin{figure}
\scalebox{0.8}{\input{figlemg1-1.pstex_t}}
\caption{Proof of Lemma \ref{lem g1} for $\ell=1$}
\label{fig lemg1}
\end{figure}

Now suppose that $\ell>1$. The smoothing at the first crossing point $p_1$  of $\zg $ and $\zs_1$ has the leading term $\{\zd_s , \zg'\}$, where $\zg'$ is the arc starting at the vertex $s'$ of the first fan $F_1$, following $\zs_1$ up to the   point $p_1$ and then following $\zg$ until the endpoint $t$, see Figure \ref{fig lemg1a}.
\begin{figure}
\scalebox{1}{\input{figlemg2.pstex_t}}
\caption{Proof of Lemma \ref{lem g1} for $\ell>1$}
\label{fig lemg1a}
\end{figure}
 Note that $\zg'$ is avoiding all the crossings with the fan $F_1$.  
Thus the maximal $(T,\zg')$-fans $F_2',F_3',\ldots,F_\ell'$ are  given by $F_i'=F_i$, for $i>2$, and $F_2'$ is obtained from $F_2$ by removing the terminal arc $\tau_2$.
By induction, we know that $\zg'$ is equivalent to the leading term of the resolution of the multicurve 
\[ \{\tau_1=\zd_{s'},\zd_t, \sbar_i, \tau_i , \sbar_\ell \mid \textup{$i$ is an odd integer with $3\le i<\ell$}\}.
\]
On the other hand, the leading term of the resolution of $\{\zd_s, \sbar_1,\zg'\}$ is equivalent to $\zg$, and the result follows.
\end{proof}

\begin{lemma}\label{lem g2}
Let $\zg$ be a closed loop. 
Let $F_1,\ldots,F_\ell$ be the maximal $(T,\zg)$-fans ordered by the orientation of $\zg$ and let $\zs_i$ be the initial arc of $F_i$ and $\tau_i$ the terminal arc of $F_i$.
Then $\zg$ is equivalent to the leading term in the resolution of the multicurve
\[  \{\sbar_i, \tau_i \mid\textup{$i$ is an odd integer with $1\le i\le \ell-1$}\},
\]
which is the same as
\[  \{\sbar_i, \tau_i \mid\textup{$i$ is an even integer with $2\le i\le \ell$}\}.
\]
\end{lemma}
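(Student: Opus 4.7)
The plan is to reduce Lemma \ref{lem g2} directly to the arc case already handled in Lemma \ref{lem g1} via a single skein move. The idea is to pick a distinguished crossing of $\gamma$ with $\sigma_1$ and smooth $\gamma$ against the auxiliary arc $\bar{\sigma}_1$, producing an arc $\tilde{\gamma}$ to which Lemma \ref{lem g1} applies. The absence of the boundary-segment terms $\delta_s,\delta_t$ in the statement (compared with Lemma \ref{lem g1}) should correspond to the fact that these boundary segments are precisely the two boundary segments $d_1,d_2$ used in the definition of $\bar{\sigma}_1$, and therefore contribute trivially since $x_{d_1}=x_{d_2}=1$.

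First, I would pick the first crossing point $p_1$ of $\gamma$ with the initial arc $\sigma_1$ of $F_1$. Since $\bar{\sigma}_1$ is homotopic to the concatenation $d_2\sigma_1 d_1$ of the two boundary segments adjacent to $\sigma_1$ with $\sigma_1$ itself (Proposition \ref{invert}), the arc $\bar{\sigma}_1$ crosses $\gamma$ at a point $q$ close to $p_1$. Applying Theorem \ref{th:skein1} to the multicurve $\{\gamma,\bar{\sigma}_1\}$ at $q$ expresses
\begin{equation*}
x_{\bar{\sigma}_1}\,x_\gamma \;=\; \pm\, Y_+\, x_{C_+} \;\pm\; Y_-\, x_{C_-},
\end{equation*}
and by Lemma \ref{skein-g} exactly one of these two terms, the leading term, has trivial coefficient. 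A direct geometric inspection (of the type in Figure \ref{skein3figure}) shows that the leading term is $x_{\tilde{\gamma}}$ for the arc $\tilde{\gamma}$ from $x'$ to $y'$ that follows $\bar{\sigma}_1$ up to $q$, then traces $\gamma$ once around back to $q$, then continues along $\bar{\sigma}_1$ to $y'$.

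Next I would verify the key geometric claim: the maximal $(T,\tilde{\gamma})$-fans are obtained from $F_2,F_3,\ldots,F_\ell$ in a transparent way (with $F_1$ effectively replaced by the trivial passage through the triangle adjacent to $d_1,d_2,\sigma_1$), and the first and last boundary segments $\delta_s,\delta_t$ of $\tilde{\gamma}$ (in the sense of Lemma \ref{lem g1}) are exactly $d_1$ and $d_2$. Lemma \ref{lem g1} applied to $\tilde{\gamma}$ then writes $\tilde{\gamma}$ as the leading term of the resolution of the multicurve $\{d_1,d_2,\bar{\sigma}_i,\tau_i,\bar{\sigma}_\ell : i \text{ odd},\, 3\leq i<\ell\}$, and since $x_{d_1}=x_{d_2}=1$, this reduces to $\{\bar{\sigma}_i,\tau_i,\bar{\sigma}_\ell : i \text{ odd},\,3\le i<\ell\}$. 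Combining with the skein move and using the cyclic parity relations $\sigma_i=\sigma_{i+1}$ ($i$ even) and $\tau_i=\tau_{i+1}$ ($i$ odd) that hold for the loop setting (by the same argument as in Lemma \ref{lem g1}) to identify $\bar{\sigma}_\ell$ with $\bar{\sigma}_{\ell-1}$ and adjoin $\bar{\sigma}_1$, one obtains the claimed multicurve $\{\bar{\sigma}_i,\tau_i : i \text{ odd},\,1\le i\le\ell-1\}$. Equality of the two formulations in the statement then follows from the same parity identifications.

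The main obstacle will be the geometric bookkeeping in the fan identification for $\tilde{\gamma}$: one must show that the cyclic wrap-around structure of the fans $F_1,\ldots,F_\ell$ around $\gamma$ translates faithfully into a linear sequence of fans for $\tilde{\gamma}$ with boundary data $d_1,d_2$ at the ends, and that the parity of $\ell$ is forced to be even so that the indexing by odd $i$ in the range $1\le i<\ell$ reaches up to $\ell-1$ cleanly. This requires a careful case analysis at the triangle containing $\sigma_1$ and at the two neighboring fans $F_2$ and $F_\ell$, and is the step where the differences between the loop case and the arc case are concentrated.
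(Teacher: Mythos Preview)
Your reduction to Lemma \ref{lem g1} via a single skein move is natural, but the argument as written runs in the wrong direction and leaves a genuine gap. From the smoothing of $\{\bar\sigma_1,\gamma\}$ you obtain that $\tilde\gamma$ is the leading term of the resolution of $\{\bar\sigma_1,\gamma\}$; and from Lemma \ref{lem g1} you obtain that $\tilde\gamma$ is the leading term of the resolution of some explicit multicurve $N'$. These two facts only say that $\{\bar\sigma_1,\gamma\}$ and $N'$ share the same leading term $\tilde\gamma$. They do \emph{not} let you conclude that the target multicurve $M=\{\bar\sigma_i,\tau_i:i\text{ odd}\}$ has leading term $\gamma$. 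Concretely, ``adjoining $\bar\sigma_1$'' to $N'$ does not help: the leading term of $N'\cup\{\bar\sigma_1\}$ is the leading term of $\{\tilde\gamma,\bar\sigma_1\}$, and there is no reason this should be the closed loop $\gamma$ (smoothing two arcs with endpoints $x',y'$ never produces a closed loop). What is missing is an \emph{inverse} step: an explicit multicurve $\{\tilde\gamma,Z\}$ whose resolution has $\gamma$ as its leading term. Without such a step you are implicitly using injectivity of the $\gg$-vector map, which is exactly what Lemmas \ref{lem g1}--\ref{lem g2} are meant to establish.

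The paper's proof supplies precisely this missing inverse step, and does so by opening the loop with an arc of $T$ rather than with $\bar\sigma_1$. One smooths $\{\tau_\ell,\gamma\}$ (where $\tau_\ell=\tau_1\in T$) to obtain an arc $\gamma'$ whose endpoints are the endpoints of $\tau_\ell$; because $\tau_\ell\in T$, the fan structure of $\gamma'$ is read off directly from that of $\gamma$, and one finds $\delta_s=\delta_t=\tau_\ell$ (not boundary segments). Lemma \ref{lem g1} then gives $\gamma'$ as the leading term of an explicit multicurve $N$ containing two copies of $\tau_\ell$. The key additional observation is that resolving $\{\gamma',\bar\tau_\ell\}$ has leading term $\gamma$: smoothing at the obvious crossing undoes the opening of the loop. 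Hence $\gamma$ is the leading term of $N\cup\{\bar\tau_\ell\}$, and since $\{\tau_\ell,\bar\tau_\ell\}$ resolves to boundary segments, this multicurve is equivalent to $M$. Your identification $\delta_s=d_1$, $\delta_t=d_2$ is also suspect (the first triangle $\tilde\gamma$ meets from $x'$ need not have $d_1$ as the clockwise side), but even if the fan bookkeeping were fixed, the logical gap above would remain.
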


 \begin{proof}
First note that, since $\zg$ is closed loop, the number of maximal fans whose vertex lies in the interior of $\zg$ must be equal to the number of maximal fans whose vertex lies in the exterior of $\zg$; thus $\ell $ is even.
Choose a starting point $p$ and an orientation for $\zg$ such that the first arc that $\zg$ crosses is the terminal arc $\tau_1$ of the fan $F_1$ in the point $x$, and  then $\zg$ crosses the fan $F_1$. Note that $\tau_\ell=\tau_1$, since $\zg $ is a closed loop. 
Smoothing the multicurve $\{\tau_\ell,\zg\}$, we get a leading term $\zg'$ that is an arc starting at a point $s$, following $\tau_\ell$ up to the point $x$, then following $\zg$ one time around up to the point $x$ again and then following $\tau_\ell$ until its endpoint which we label $t$.
 
Lemma \ref{lem g1} implies that $\zg'$ is equivalent to the leading term of the resolution of the multicurve
  \[  \{\zd_s,\zd_t, \sbar_i, \tau_i , \sbar_\ell \mid \textup{$i$ is an even integer with $2\le i<\ell$}\}.
 \]
Note that $\zd_s=\zd_t=\tau_\ell$.
On the other hand, $\zg$ is equivalent to the leading term of the resolution of the multicurve $\{\zg',(-\tau_\ell)\}$, and the result follows since the leading term of $\{(-\tau_\ell),\tau_\ell\}$ is equivalent to a union of boundary segments.
\end{proof}

\begin{theorem}\label{gbijection}
The
 $\gg$-vector maps $\gg: \B^{\circ}\to \Z^n$ 
and $\gg: \B \to \Z^n$ are both bijections.  
\end{theorem}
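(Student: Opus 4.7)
The plan is to prove bijectivity for $\B^{\circ}$ first, then deduce bijectivity for $\B$ from Proposition~\ref{same-g}. Surjectivity of $\gg\colon\B^{\circ}\to\Z^n$ is immediate from Lemma~\ref{lem gf}, so the main task is to verify that $f\circ\gg=\mathrm{id}_{\B^{\circ}}$; this simultaneously gives injectivity of $\gg$ on $\B^{\circ}$ and confirms that $f$ is its two-sided inverse.

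Fix $z=\prod_{c\in C}x_c\in\B^{\circ}$ with $\gg$-vector $v=\sum_i r_ie_i+\sum_j s_j(-e_j)$ written in terms of its positive and negative parts. For each $c\in C$, Lemmas~\ref{lem g1} and~\ref{lem g2} exhibit an explicit multicurve $M_c$ whose elements are initial arcs $\tau_i$, inverse arcs $\tbar_j$, and boundary segments (which contribute trivially), such that $x_c$ is the leading term of the resolution of $\prod_{a\in M_c}x_a$. Setting $M_z=\bigsqcup_{c\in C}M_c$, a short computation gives $\prod_{a\in M_z}x_a=z+\rho$, where $\rho$ is a sum of terms each divisible by a nontrivial coefficient monomial; hence by the uniqueness statement in Theorem~\ref{g-loops}, $z$ is the leading term of the resolution of $\prod_{a\in M_z}x_a$ in its $\B^{\circ}$-expansion.

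The multicurve $M_z$ need not equal $\{\tau_i^{r_i}\}\sqcup\{\tbar_j^{s_j}\}$ as a multiset, because cancelling pairs $\tau_i,\tbar_i$ may appear in $M_z$. However, the exchange relation $x_{\tau_i}x_{\tbar_i}=Yx_rx_s+1$ from the proof of Proposition~\ref{invert} shows that $\prod_{a\in M_z}x_a$ and $\prod_i x_{\tau_i}^{r_i}\prod_j x_{\tbar_j}^{s_j}$ differ by an element of $\A$ divisible by a nontrivial coefficient monomial. By another application of Theorem~\ref{g-loops}, both products therefore share the same leading term, namely $z$. This yields $f(\gg(z))=z$ and hence injectivity of $\gg$ on $\B^{\circ}$.

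Finally, the bijection $\phi\colon\C^{\circ}(S,M)\to\C(S,M)$ that replaces each maximal $k$-bangle $\Bang_k\zeta$ by the bracelet $\Brac_k\zeta$ induces a bijection $\Phi\colon\B^{\circ}\to\B$. Proposition~\ref{same-g} together with the additivity of $\gg$-vectors gives $\gg\circ\Phi=\gg$ as maps $\B^{\circ}\to\Z^n$, so bijectivity of $\gg$ on $\B$ follows from its bijectivity on $\B^{\circ}$. The main subtlety lies in the cancellation argument of the third paragraph: one must verify that insertion of $\tau_i,\tbar_i$ pairs into the multicurve does not alter the leading term of its resolution, which is where the exchange relation from Proposition~\ref{invert} and the principal-coefficients hypothesis (via Theorem~\ref{g-loops}) are both essential.
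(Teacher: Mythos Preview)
Your argument for $\B^{\circ}$ is correct and follows the same strategy as the paper: both show that $f$ is a two-sided inverse of $\gg$ by using Lemmas~\ref{lem g1} and~\ref{lem g2} to realize every element of $\B^{\circ}$ as a leading term of a product of $x_{\tau_i}$'s and $x_{\tbar_j}$'s. The paper phrases this as ``$f$ is surjective,'' while you phrase it as ``$f\circ\gg=\mathrm{id}_{\B^{\circ}}$''; these are equivalent given Lemma~\ref{lem gf}. Your explicit treatment of cancelling pairs $\tau_i,\tbar_i$ via the relation $x_{\tau_i}x_{\tbar_i}=Yx_rx_s+1$ from Proposition~\ref{invert} is a genuine addition: the paper's proof jumps from ``each arc and loop lies in the image of $f$'' to ``$f$ is surjective'' without addressing why the multicurve assembled from the $M_c$'s can be reduced to the canonical form $\{\tau_i^{r_i},\tbar_j^{s_j}\}$. (A small quibble: in paragraph two the relevant uniqueness input is Lemma~\ref{skein-g}, not Theorem~\ref{g-loops}, since you are identifying the coefficient-free term of a $\B^{\circ}$-expansion rather than of a Laurent expansion.)

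There is, however, a gap in your reduction to $\B$. The map $\Phi\colon\B^{\circ}\to\B$ is only well-defined if one knows that $\pi^{\circ}\colon\C^{\circ}(S,M)\to\B^{\circ}$ is injective, i.e.\ that distinct $\C^{\circ}$-compatible collections yield distinct cluster-algebra elements; otherwise $\Phi(\pi^{\circ}(C)):=\pi(\phi(C))$ may depend on the choice of preimage $C$. This injectivity has not been established at this point in the argument, and invoking it here would be circular. The paper's one-line reduction via Proposition~\ref{same-g} is admittedly just as terse, but the cleaner fix is to avoid $\Phi$ altogether: rerun your argument with the $\B$-expansion in place of the $\B^{\circ}$-expansion. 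Proposition~\ref{prop cheb} shows that expanding any element of $\B^{\circ}$ in $\B$ again has a unique coefficient-free leading term (the bracelet version), so the analogue $f'\colon\Z^n\to\B$ is well-defined, and the same cancellation argument yields $f'\circ\gg=\mathrm{id}_{\B}$.
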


\begin{proof}
By Proposition \ref{same-g}, it suffices to show that 
$\gg:\B^{\circ}\to \Z^n$ is a bijection.
Lemmas \ref{lem g1} and \ref{lem g2} imply that each arc and each closed loop lies in the image of $f$, which allows us to conclude that $f$ is surjective. We have shown in Lemma \ref{lem gf} that $\gg\circ  f$ is the identity on $\ZZ^n$, which shows that $f$ is a bijection and $\gg=f^{-1}$. 
\end{proof}

\begin{corollary}\label{cor:independence}
The sets $\B^{\circ}$ and $ \B$ are  both
linearly independent.
\end{corollary}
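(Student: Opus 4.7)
The plan is to deduce Corollary \ref{cor:independence} immediately from the combination of Proposition \ref{suffice}, Theorem \ref{g-loops}, and Theorem \ref{gbijection}. Since we work with principal coefficients at the initial seed $\Sigma_T$, the extended exchange matrix $\widetilde{B_T}$ of size $2n \times n$ has its bottom block equal to the identity matrix, so its $n$ columns are linearly independent in $\Z^{2n}$. This puts us squarely in the setting where Proposition \ref{suffice} applies.

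First I would verify that every element of $\B^\circ$ (resp.\ $\B$) has a Laurent expansion with respect to the initial seed of the precise form required by Proposition \ref{suffice}. This is exactly the content of Theorem \ref{g-loops}: any such element can be written as $\x^g + \sum_h \lambda_h\, \x^{g+h}$, where $\x^g$ is the unique monomial not divisible by any of the coefficient variables $y_1,\dots,y_n$, and each $h$ occurring in the sum is a non-negative integer combination of the columns of $\widetilde{B_T}$. By Remark \ref{rem g}, the vector $g$ here is precisely the $\gg$-vector in the sense of Definition \ref{g-def} (extended to all of $\B^\circ$ and $\B$ after Theorem \ref{g-loops}).

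Next I would invoke Theorem \ref{gbijection}, which asserts that the $\gg$-vector maps $\gg: \B^\circ \to \Z^n$ and $\gg: \B \to \Z^n$ are bijections. In particular, distinct elements of $\B^\circ$ (or of $\B$) have distinct $\gg$-vectors, and these $\gg$-vectors are honest vectors in $\Z^n$, i.e.\ they already differ in one of the first $n$ coordinates whenever they differ at all. This is exactly the separation hypothesis required by Proposition \ref{suffice}.

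Finally, applying Proposition \ref{suffice} with $U = \B^\circ$ (resp.\ $U = \B$) yields the linear independence of $\B^\circ$ and $\B$ over the ground ring $\Z\PP = \Z[y_1,\dots,y_n]$ of $\A = \Aprin(B_T)$. There is no essential obstacle here, since all the heavy lifting has already been done: the lattice-theoretic analysis of Section \ref{sec:lattice} supplied the leading-term structure behind Theorem \ref{g-loops}, and the fan/skein arguments of Section \ref{sec:g} supplied Theorem \ref{gbijection}. The only minor point to check is that ``different elements'' in Proposition \ref{suffice} really does mean different basis elements here, which is immediate from the fact that the $\gg$-vector map is injective on each of $\B^\circ$ and $\B$.
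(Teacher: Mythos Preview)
Your proof is correct and follows essentially the same route as the paper's: verify the columns of $\widetilde{B_T}$ are independent (identity block), invoke Theorem \ref{g-loops} for the leading-term structure, invoke Theorem \ref{gbijection} for distinctness of $\gg$-vectors, and feed everything into Proposition \ref{suffice}. If anything, your write-up is slightly more streamlined, since you cite Theorem \ref{g-loops} directly for elements of $\B^\circ$ and $\B$ rather than first discussing individual arcs/loops via Proposition \ref{leadingterm}; the one minor inaccuracy is that the identification of the leading exponent with the $\gg$-vector comes from Definition \ref{g-def} (and Proposition \ref{leadingterm}) rather than from Remark \ref{rem g}.
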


\begin{proof}
Clearly the extended $2n \times n$ exchange matrix
$\widetilde{B_T}$
associated to $T$, whose bottom $n\times n$ submatrix
consists of the identity matrix, has linearly independent columns.
Let $x_1,\dots,x_n$ denote the cluster variables $x_{\tau_1},\dots, x_{\tau_n}$,
and $x_{n+1},\dots,x_{2n}$ denote the coefficient variables
$y_{\tau_1},\dots,y_{\tau_n}$.

Proposition
\ref{leadingterm} implies that 
if $\gamma$ is any arc, essential loop or bracelet, then $x_{\gamma}$ has a unique 
term
$x_M$ which is a Laurent monomial in $x_1,\dots,x_n$ and which is not divisible by any coefficient variable $y_{\tau_i}$.
Proposition \ref{g} and Theorem \ref{g-loops} imply that the exponent vector 
of every other Laurent monomial in the expansion of $x_{\gamma}$
can be obtained from the exponent vector of $x_M$ by adding
a non-negative linear combination of columns of $\widetilde{B_T}$.
This means that $x_M$ is the leading term of each Laurent expansion.
Finally, Theorem 
\ref{gbijection} implies that the exponent vectors of the 
leading terms of all elements of $\B^{\circ}$ are pairwise distinct. 
Proposition \ref{suffice} now implies that elements of 
$\B^{\circ}$ are linearly independent.  The same proof
works for $\B$.
\end{proof}

For completeness, we include the following result on the computation of $\gg$-vectors.
\begin{corollary}
\begin{enumerate}
\item The $g$-vector of an arc is equal to $e_{\zd_s}+e_{\zd_t}-e_{\zs_\ell}+\sum (e_{\tau_i}-e_{\zs_i})$, where  $\zs_i,$ respectively $\tau_i$ is the initial, respectively terminal, arc of the $i$-th fan, and the sum is taken over all maximal $T$-fans $F_i$ of the arc, with odd (respectively even) index $i$,  if the arc crosses an initial (respectively terminal) arc first.
\item The $g$-vector of a closed loop is equal to $\sum (e_{\tau_i}-e_{\zs_i})$, where the sum is taken over all odd maximal $T$-fans of the loop, and $\zs_i$ (respectively $\tau_i$) is the initial (respectively terminal) arc of the $i$-th fan.
\end{enumerate}
\end{corollary}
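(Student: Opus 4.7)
The plan is to derive both formulas as a direct consequence of Lemmas \ref{lem g1} and \ref{lem g2}, combined with the homogeneity properties of the $\gg$-vector established in Section \ref{sec:g}. No new geometric input is required; the work has essentially been done, and all that remains is to read off the degree of the explicitly described multicurve.

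The key reduction is this: by Lemma \ref{skein-g}, every term appearing in the expansion of a product of elements of $\B^\circ$ into the basis $\B^\circ$ has the same $\gg$-vector. Combined with the additivity rule $\gg(\prod_j x_j) = \sum_j \gg(x_j)$ from Definition \ref{g-def}, this means that if a multicurve $M = \{\zg_1,\dots,\zg_t\}$ is equivalent to the leading term of its resolution, then $\gg(x_M) = \sum_j \gg(x_{\zg_j})$, and moreover $\gg(\text{leading term}) = \gg(x_M)$. Recall also the elementary computations $\gg(x_{\tau_i}) = e_i$ (by definition of the grading) and $\gg(x_{\tbar_i}) = -e_i$ (Proposition \ref{invert}), and that boundary segments $b$ satisfy $x_b = 1$ and therefore contribute $0$ to any $\gg$-vector.

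For part (1), suppose $\gamma$ is an arc which crosses the initial arc of $F_1$ first; the other case is entirely analogous. By Lemma \ref{lem g1}, $\gamma$ is equivalent to the leading term of the resolution of
$$\{\zd_s,\,\zd_t,\,\sbar_i,\,\tau_i,\,\sbar_\ell \mid i \text{ odd},\ 1 \le i < \ell\}.$$
Applying the reduction above,
$$\gg(x_\zg) \;=\; \gg(x_{\zd_s}) + \gg(x_{\zd_t}) + \sum_{\substack{i \text{ odd}\\ 1\le i<\ell}} \bigl(\gg(x_{\sbar_i}) + \gg(x_{\tau_i})\bigr) + \gg(x_{\sbar_\ell}).$$
Substituting $\gg(x_{\tau_j}) = e_{\tau_j}$ and $\gg(x_{\sbar_j}) = -e_{\zs_j}$, and interpreting $e_b = 0$ when $b$ is a boundary segment, yields exactly the claimed formula $e_{\zd_s}+e_{\zd_t}-e_{\zs_\ell}+\sum(e_{\tau_i}-e_{\zs_i})$. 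For part (2), Lemma \ref{lem g2} gives $\gamma$ as the leading term of the resolution of $\{\sbar_i,\tau_i \mid i \text{ odd},\ 1\le i \le \ell-1\}$, and the same additivity argument immediately gives $\gg(x_\zg) = \sum_{i\text{ odd}}(e_{\tau_i} - e_{\zs_i})$.

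The main (minor) subtlety is verifying that the additivity of $\gg$-vectors under multiplication transfers correctly through skein relations to their leading terms. This is not really an obstacle, since it is precisely the content of Lemma \ref{skein-g}: all terms in a skein expansion agree in $\gg$-vector, so the $\gg$-vector of the original product equals the $\gg$-vector of any one term, in particular the leading one. Once this is invoked, both formulas are immediate bookkeeping.
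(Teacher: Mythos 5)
Your proof is correct and takes essentially the same route as the paper, whose own proof simply cites Theorem \ref{gbijection} together with Lemmas \ref{lem g1} and \ref{lem g2}: the mechanism you spell out (homogeneity of the skein expansion via Lemma \ref{skein-g}, additivity of $\gg$-vectors from Definition \ref{g-def}, and $\gg(x_{\tbar_i})=-e_i$ from Proposition \ref{invert}) is precisely what underlies $\gg\circ f=\mathrm{id}$ in Lemma \ref{lem gf}, so you are just unpacking the same argument. One cosmetic remark: in a skein expansion it is the full terms $Y_iM_i$, not the basis elements $M_i$ alone, that share the common $\gg$-degree, but since the leading term has $Y_j=1$ this does not affect your conclusion.
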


\begin{proof}
This follows from Theorem \ref{gbijection} and Lemmas \ref{lem g1} and \ref{lem g2}.
\end{proof}

\section{Coefficient systems coming from a full-rank exchange matrix}\label{full-rank}

In this section we will prove Corollary \ref{cor:coefficients}, which extends the 
results of this paper to a cluster algebra from a surface with a coefficient system 
coming from a full-rank exchange matrix.

Let $(S,M)$ be a surface without punctures and at least two marked points, and let 
$T=(\tau_1,\dots,\tau_n)$
be a triangulation of $(S,M)$.  Let $B$ be a full-rank $m \times n$ exchange matrix,
whose top $n \times n$ part $B_T$ comes from the triangulation $T$.  Let $\Afull
= \A(B) \subset \Q(\X_1,\dots,\X_m)$; here $(\X_1,\dots,\X_n)$ is the set of initial cluster variables.  
We will construct two bases $\BB^{\circ}$ and $\BB$ for $\Afull$,
using the corresponding bases $\B^{\circ}$ and $\B$ for $\A$, where $\A$ is the 
cluster algebra associated to $(S,M)$ with principal coefficients with respect to 
the seed $T$.

In order to define $\BB^{\circ}$ and $\BB$, we first recall the 
\emph{separation formulas} from \cite{FZ4}.  We will apply it here
for the case of the cluster algebra of geometric type $\Afull = \A(B)$.
First we need some notation.

If $P(u_1,\dots,u_n)$ is a Laurent polynomial, we define
$\Trop(P)$ by setting $$\Trop(\prod_j u_j^{a_j} + \prod_j u_j^{b_j}) = 
\prod_j u_j^{\min(a_j,b_j)},$$
and extending linearly.  In particular, $\Trop(P)$ is always a Laurent monomial.

Let $\Sigma_{t_0} = (x_1,\dots,x_n; y_1,\dots,y_n; B_T)$ be the initial seed of 
the cluster algebra with principal coefficients $\A$.  
For each $1 \leq j \leq n$, we define 
$$\Y_j = \prod_{i=n+1}^m \X_i^{b_{ij}} \text{ and }
\hat{\Y}_j = 
\prod_{i=1}^m \X_i^{b_{ij}}.$$
Then \cite[Theorem 3.7]{FZ4} and \cite[Corollary 6.3]{FZ4} express the cluster variable 
$x_{\gamma}$  of $\Afull$ as the following equivalent statements. Recall that
$X_{\gamma}^T$ and $F_{\gamma}^T$ denote the quantities defined in Definitions \ref{def:matching} and 
\ref {def closed loop}, see also Remark \ref{remnot}.

\begin{proposition}\label{separation}
\begin{equation*}
x_{\gamma} = 
\frac{X_{\gamma}^{T}(\X_1,\dots,\X_n; {\Y}_1,\dots,{\Y}_n)}
{\Trop(F_{\gamma}^{T}(\Y_1,\dots,\Y_n))}  =
\frac{F_{\gamma}^{T}(\hat{\Y}_1,\dots,\hat{\Y}_n)}
{\Trop(F_{\gamma}^{T}(\Y_1,\dots,\Y_n))} \cdot \X_1^{g_1} \dots \X_n^{g_n}.
\end{equation*}
Here $(g_1,\dots,g_n)$ is the $\gg$-vector of $X_{\gamma}^T$.
\end{proposition}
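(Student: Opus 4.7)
\textbf{Proof plan for Proposition \ref{separation}.}

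The plan is to recognize this as a direct application of the separation-of-additions formula of \cite{FZ4} to our specific situation. First I would observe that $\Afull = \A(B)$ is a cluster algebra of geometric type whose ambient semifield is $\Trop(\X_{n+1},\dots,\X_m)$, with initial coefficients $\Y_j = \prod_{i=n+1}^m \X_i^{b_{ij}}$, and whose top $n\times n$ exchange matrix $B_T$ coincides with the one used to build $\A = \Aprin(B_T)$. Since both cluster algebras share the same initial $B_T$, the combinatorial data governing mutation (exchange matrices along $\TT_n$, and hence $F$-polynomials and $\gg$-vectors) are identical; only the coefficients differ. Thus the $F$-polynomial and $\gg$-vector of the cluster variable indexed by $\gamma$ computed in $\Afull$ agree with those computed in $\Aprin(B_T)$, namely with $F_\gamma^T$ (from Definition~\ref{def:matching}, justified by Theorem~\ref{thm MSW}) and with the $\gg$-vector $(g_1,\dots,g_n)$ of $X_\gamma^T$.

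Next I would invoke \cite[Corollary 6.3]{FZ4}, which in any cluster algebra of geometric type expresses an arbitrary cluster variable as
\[
x_\gamma \;=\; \frac{F_\gamma^T(\hat{\Y}_1,\dots,\hat{\Y}_n)}{\Trop\bigl(F_\gamma^T(\Y_1,\dots,\Y_n)\bigr)}\cdot \X_1^{g_1}\cdots \X_n^{g_n},
\]
which is exactly the second equality. This already proves the right-hand half of the proposition, so it remains only to identify this expression with the first form, involving $X_\gamma^T$.

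For the equivalence of the two displayed expressions, the key input is the homogeneity statement of Proposition~\ref{g}: the Laurent polynomial $X_\gamma^T(x_1,\dots,x_n;y_1,\dots,y_n)$ is homogeneous of multidegree $(g_1,\dots,g_n)$ with respect to the $\gg$-grading in which $\deg(x_i)=e_i$ and $\deg(y_j)=-\mathbf{b}_j$ (the $j$th column of $B_T$). Since $\hat y_j := \prod_i x_i^{b_{ij}}\, y_j$ has degree $0$, homogeneity lets us write $X_\gamma^T(x;y) = x^g\, P(\hat y_1,\dots,\hat y_n)$ for some polynomial $P$; specializing $x_i \mapsto 1$ and using $F_\gamma^T(y) = X_\gamma^T(1;y)$ identifies $P = F_\gamma^T$. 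Specializing $x_i \mapsto \X_i$ and $y_j \mapsto \Y_j$ (so $\hat y_j \mapsto \hat{\Y}_j$) then yields
\[
X_\gamma^T(\X_1,\dots,\X_n;\Y_1,\dots,\Y_n) \;=\; \X_1^{g_1}\cdots \X_n^{g_n}\, F_\gamma^T(\hat{\Y}_1,\dots,\hat{\Y}_n),
\]
which exhibits the two sides as equal after dividing by $\Trop(F_\gamma^T(\Y))$.

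There is no real obstacle here beyond bookkeeping: the content of the proposition is the compatibility of our surface-theoretic definitions of $X_\gamma^T$ and $F_\gamma^T$ with the general framework of \cite{FZ4}, and that compatibility is secured by Theorem~\ref{thm MSW} together with the homogeneity of Proposition~\ref{g}. The mildest subtlety to keep in mind is that the $\gg$-vector and $F$-polynomial in \cite{FZ4} are defined intrinsically from the $n\times n$ exchange matrix $B_T$ at the initial seed and the mutation path in $\TT_n$, so they transfer unchanged to any cluster algebra whose extended exchange matrix has $B_T$ as its top square block; this is exactly the setting of $\Afull$.
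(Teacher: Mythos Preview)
Your proposal is correct and matches the paper's approach: the paper simply cites \cite[Theorem~3.7]{FZ4} for the first expression and \cite[Corollary~6.3]{FZ4} for the second, with no further argument. You have supplied the bookkeeping the paper omits---in particular, spelling out why the equivalence of the two displayed forms follows from the $\gg$-homogeneity of Proposition~\ref{g} (which is essentially how \cite[Corollary~6.3]{FZ4} is derived from \cite[Theorem~3.7]{FZ4}), and noting that Theorem~\ref{thm MSW} is what identifies the surface-theoretic $X_\gamma^T$, $F_\gamma^T$ with the intrinsic objects of \cite{FZ4}.
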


By analogy, if $\zeta$ is a closed loop in $(S,M)$, we \emph{define} 
the cluster algebra element $x_{\zeta}$ in $\Afull$ as follows.

\begin{definition}\label{separation2}
\begin{equation*}
x_{\zeta} = 
 \frac{X_{\zeta}^{T}(\X_1,\dots,\X_n; \Y_1,\dots,\Y_n)}
{\Trop(F_{\zeta}^{T}(\Y_1,\dots,\Y_n))} = 
\frac{F_{\zeta}^{T}(\hat{\Y}_1,\dots,\hat{\Y}_n)}
{\Trop(F_{\zeta}^{T}(\Y_1,\dots,\Y_n))} \cdot \X_1^{g_1} \dots \X_n^{g_n},
\end{equation*}
where $(g_1,\dots,g_n)$ is the $\gg$-vector of $X_{\zeta}^T$ 
(see Definition \ref{g-def}).
\end{definition}
Note that it is easy to check that the second and third expressions above
are equivalent, following the proof of \cite[Corollary 6.3]{FZ4}.

Now that we have defined  elements of $\Afull$ associated to each 
arc and closed loop, we may define the collections of elements which will comprise
our bases.  
\[\BB^{\circ} = \left\{\prod_{c\in C} x_c \ \vert \ C \in \C^{\circ}(S,M) \right\}
\text{ and }
\BB = \left\{\prod_{c\in C} x_c \ \vert \ C \in \C(S,M) \right\}.\]
As before, $\C^{\circ}(S,M)$ and $\C(S,M)$ denote the 
$\C^{\circ}$-compatible and $\C$-compatible collections of arcs and loops.

\begin{theorem}
$\BB^{\circ}$ is a basis for $\Afull$.  Similarly,
$\BB$ is a basis for $\Afull$.
\end{theorem}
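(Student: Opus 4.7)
The plan is to reduce everything to the principal-coefficient case (Theorem \ref{thm main}, Theorem \ref{gbijection}) by means of the separation formula. Recall that for an arc or closed loop $c$, the element $x_c\in\Afull$ is defined via the separation formula (Proposition \ref{separation}, Definition \ref{separation2}) as the Laurent polynomial
\[
x_c \;=\; \frac{X_c^T(\X_1,\dots,\X_n;\Y_1,\dots,\Y_n)}{\Trop(F_c^T(\Y_1,\dots,\Y_n))}.
\]
For arcs, $x_c$ coincides with the corresponding cluster variable; for essential loops and bracelets, this is taken as a definition. The sets $\BB^\circ$ and $\BB$ are built from these in the same combinatorial way as $\B^\circ$ and $\B$.

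First I would transfer the skein relations of \cite{MW} (Theorem \ref{th:skein1}) to $\Afull$. Each such identity is a polynomial equality in $x_1,\dots,x_n,y_1,\dots,y_n$; substituting $x_i\mapsto \X_i$ and $y_j\mapsto\Y_j$ and rescaling by the appropriate tropical denominators yields an analogous relation among the defined elements $x_C, x_{C_+}, x_{C_-}$ in $\Afull$, whose coefficients are monomials in $\X_{n+1},\dots,\X_m$ (possibly up to units in the ground semifield $\PP$). With this in hand, the argument of Proposition \ref{lem:containment} goes through verbatim: the two-marked-point hypothesis produces arcs $\gamma,\gamma'$ whose product $x_\gamma x_{\gamma'}\in\Afull$ smooths to an expression in which $x_\zeta$ appears with a coefficient that is a unit in $\ZZ\PP$, so $x_\zeta\in\Afull$ for every essential loop $\zeta$. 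Bracelets then lie in $\Afull$ by the Chebyshev recurrence (Proposition \ref{prop Cheby}), itself transferred by the same mechanism. Hence $\BB^\circ,\BB\subseteq\Afull$.

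Spanning is then handled in direct analogy with Section \ref{sect span}: any element of $\Afull$ is a polynomial in cluster variables; using the skein relations in $\Afull$ I would inductively reduce any monomial in cluster variables to a $\ZZ\PP$-combination of $\BB^\circ$-elements, and then Proposition \ref{prop cheb} converts bangles to bracelets to obtain the analogous statement for $\BB$.

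For linear independence I would apply Proposition \ref{suffice}. From Definition \ref{separation2} it is immediate that the Laurent expansion of $x_c$ in the initial seed of $\Afull$ has a distinguished leading monomial $\X^{g'}$, where the first $n$ coordinates of $g'$ are the principal-coefficient $\gg$-vector of $c$ (Theorem \ref{g-loops}) and the remaining $m-n$ coordinates are prescribed by $\Trop(F_c^T(\Y))$; every other monomial in the expansion is obtained by adding to $g'$ a non-negative integer combination of columns of $B$ (since each $\hat{\Y}_j=\prod_i\X_i^{b_{ij}}$ corresponds to the $j$-th column). Products of such elements inherit the same structure ($\gg$-vectors add, $F$-polynomials multiply). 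By Theorem \ref{gbijection} the first $n$ coordinates of $g'$ separate the distinct elements of $\BB^\circ$ (respectively $\BB$), and since $B$ has full rank Proposition \ref{suffice} delivers linear independence. The main obstacle is making the skein-relation transfer precise, since this is the single point at which the coefficient systems of $\A$ and $\Afull$ must be reconciled; once this verification is in place, every other step is a mechanical porting of the principal-coefficient arguments of Sections \ref{sec:main}--\ref{sec:g}.
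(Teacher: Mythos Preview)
Your proposal is correct and follows essentially the same route as the paper: transfer identities from the principal-coefficient algebra $\A$ to $\Afull$ via the specialization $x_i\mapsto \X_i$, $y_j\mapsto \Y_j$ (a ring homomorphism $\phi$), use this to port the skein relations and hence spanning, and deduce linear independence from the separation formula together with Theorem~\ref{gbijection} and Proposition~\ref{suffice}. The one place the paper is slightly more efficient is containment: rather than re-running the argument of Proposition~\ref{lem:containment} inside $\Afull$, it simply observes that since $X_\zeta^T\in\A$ is already a polynomial in cluster variables, applying $\phi$ and dividing by the unit $\Trop(F_\zeta^T(\Y))$ immediately gives $x_\zeta\in\Afull$.
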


\begin{proof}
First we show that 
$\BB^{\circ}$ and $\BB$ are subsets of $\Afull.$
We define a homomorphism of algebras
$\phi:\A \to \Afull$ 
which sends
each cluster variable $X_{\gamma}^T$ to $X_{\gamma}^T(\X_1,\dots,\X_n; \Y_1,\dots,\Y_n)$.
This is just a specialization of variables, so in particular it is a homomorphism.
Using this notation,
\begin{equation}\label{specialize}
x_{\zeta} = \frac{\phi(X_{\gamma}^T)} 
{\Trop(F_{\zeta}^{T}(\Y_1,\dots,\Y_n))},
\end{equation} where the denominator is a Laurent monomial
in coefficient variables.  Therefore whenever $X_{\zeta}^T$ lies in $\A$ -- 
i.e. whenever $X_{\zeta}^T$  can be written as a polynomial in cluster variables -- then 
$x_{\zeta}$ can also be written as a polynomial in cluster variables and hence is in $\Afull$.
Since we have shown that $\B^{\circ}$ and $\B$ are subsets of $\A$, it follows
that $\BB^{\circ}$ and $\BB$ are subsets of $\Afull.$

Next we show that $\BB^{\circ}$ and $\BB$ are spanning sets for $\Afull$.  As before,
each $k$-bracelet $x_{\Brac_k}(\zeta)$ 
can be written as a Chebyshev polynomial in $x_{\zeta}$, so 
it suffices to show that $\BB^{\circ}$ spans $\Afull$.  By the arguments of the previous
paragraph and \eqref{specialize}, 
every skein relation in $\A$ gives rise to a skein relation in $\Afull$.  It follows that
we can write every polynomial in cluster variables in terms of the elements of 
$\BB^{\circ}$.

Finally we show that the elements of 
$\BB^{\circ}$ (respectively $\BB$) are linearly independent.
Every $F$-polynomial $F_{\gamma}^T$ and $F_{\zeta}^T$ has constant term $1$.
Therefore it follows from Proposition \ref{separation} and Definition \ref{separation2}
that the Laurent expansion of any element
$x_{\gamma}$ (respectively $x_{\zeta}$) contains a Laurent monomial 
$x_1^{g_1} \dots x_n^{g_n} x_{n+1}^{g_{n+1}} \dots x_m^{g_m}$,
where $(g_1,\dots,g_n)$ is the $\gg$-vector of $x_{\gamma}$ (resp. $x_{\zeta}$), 
and the exponent vector of any other Laurent monomial in the same expansion
is obtained from $(g_1,\dots,g_m)$ by adding some nonnegative integer linear
combination of the columns of $B$.  The same property holds for 
monomials in the variables $x_{\gamma}$ and $x_{\zeta}$.  
Therefore by Theorem \ref{th:bijection} (which shows that the $\gg$-vectors 
are all distinct) and Proposition \ref{suffice}, the elements
of $\BB^{\circ}$ are linearly independent.  Similarly
for $\BB$. 
\end{proof}

\settocdepth{section}
\section{Appendix: Extending the results to surfaces with punctures}\label{appendix}

In this section we explain how the results and proofs in this paper need to 
be modified when dealing with a marked surface $(S,M)$ which has punctures,
i.e. marked points in the interior of $S$.  In the presence of punctures,
cluster variables are in bijection with \emph{tagged arcs}, 
which generalize ordinary arcs, and clusters are in bijection
with \emph{tagged triangulations}.  
In this section we will assume that
the reader is familiar with tagged arcs;
see \cite[Section 7]{FST} for details.
If $\gamma$ is an arc (without notches) with an endpoint at puncture $p$,
we denote the corresponding tagged arc which is notched at $p$ by 
$\gamma^{(p)}$.  If $\gamma$ is an arc (without notches) with endpoints
at punctures $p$ and $q$, we denote the corresponding
tagged arc which is notched
at both those punctures by $\gamma^{(pq)}$.

We believe that the results of the present paper may be extended to the 
case of marked surfaces $(S,M)$ which have punctures.  The main obstacle
is to prove the appropriate skein relations for tagged arcs, 
using principal coefficients, and to extend Lemma \ref{skein-g}
 to this setting.  We will give several approaches to doing so 
at the end of Section \ref{sec:spanning}.  We believe that the second
approach described there is most plausible; the drawback is that 
it involves giving separate proofs for all fifteen cases of the new
tagged skein relations.

\subsection{Definition of $\B^{\circ}$ and $\B$}
Our definitions of the conjectural bases 
are just a slight generalization of the corresponding definitions
from Section \ref{sec:bangbrac}.

\begin{definition}
A closed loop in $(S,M)$ is called \emph{essential} if
  it is not contractible \emph{nor contractible onto a single puncture},
and it does not have self-crossings.
\end{definition}

\begin{definition}
A collection $C$ of tagged arcs and essential loops is called 
\emph{$\C^{\circ}$-compatible} if the 
 tagged arcs in $C$ are pairwise compatible, and 
no two elements of $C$ cross each other.
We define $\C^{\circ}(S,M)$  to be the set of all 
$\C^{\circ}$-compatible collections in $(S,M)$.

A collection $C$ of tagged arcs and bracelets is called 
\emph{$\C$-compatible} if:
\begin{itemize}
\item the tagged arcs in $C$ are pairwise compatible;
\item no two elements of $C$ cross each other except for the self-crossings of a bracelet; and
\item given an essential loop $\gamma$ in $(S,M)$, 
there is at most one $k\ge 1$ such
that the $k$-th bracelet $\Brac_k\gamma$ lies in $C$, and, moreover, there is at
most one copy of this bracelet $\Brac_k\gamma$ in $C$.
\end{itemize}
We define $\C(S,M)$ to be the set of all $\C$-compatible
collections in $(S,M)$.
\end{definition}

\begin{definition}
We define $\B^\circ$ 
to be the set of all cluster algebra 
elements in $\A = \Aprin(B_T)$ corresponding to the set $C^{\circ}(S,M)$, that is,
\[\B^{\circ} = \left\{\prod_{c\in C} x_c \ \vert \ C \in \C^{\circ}(S,M) \right\}.\]
Similarly, we define
\[\B = \left\{\prod_{c\in C} x_c \ \vert \ C \in \C(S,M) \right\}.\]
\end{definition}

\subsection{Cluster algebra elements
associated to generalized tagged arcs}\label{gen-tagged-arcs}

In order to prove that $\B^{\circ}$ and 
$\B$ are spanning sets, we need to prove skein relations
involving tagged arcs. As in the unpunctured case, the skein
relation involving  tagged arcs
should have a simple pictorial description in terms of resolving
a crossing.  However, when one resolves two (tagged) arcs that cross each other
more than once, one may get a generalized (tagged) arc, that is, a
(tagged) arc
with a self-crossing.  See Figure \ref{figD4examp}.
For this reason we need to make sense
of the element of the (fraction field of the) cluster algebra associated to a generalized tagged arc.
As in \cite{MSW}, in order to deduce the positivity of such elements
with respect to all clusters, it suffices to consider 
cluster algebras of the form $\Aprin(B_T)$, where $T$ is an 
\emph{ideal triangulation} of $(S,M)$.  (Note that the snake graph or band graph corresponding to an arc can be defined even if it crosses through self-folded triangles.)

\begin{figure}
\scalebox{0.3}
{
\input{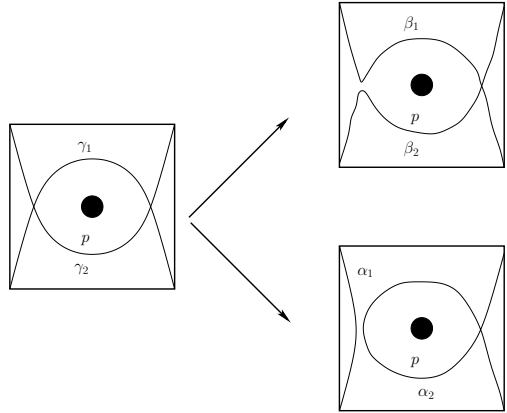}
}
\caption{Smoothing two arcs may produce a generalized arc  
with a self-crossing.}
\label{figD4examp}
\end{figure}

There are several options for how to define the elements
$x_{\gamma^{(p)}}$ and $x_{\gamma^{(pq)}}$, when $\gamma^{(p)}$ 
and $\gamma^{(pq)}$ are
generalized tagged arcs.  All three options should be equivalent.

\begin{enumerate}
\item Algebraic definition.  
If $\gamma$ is an arc (without self-crossings), with 
one end incident to a puncture $p$, then 
$x_{\ell} = x_{\gamma} x_{\gamma^{(p)}}$, where $\ell$ is the arc
cutting out a once-punctured monogon enclosing $p$ and $\gamma$.
If $\gamma$ is an arc (without self-crossings)
between two punctures $p$ and $q$, then 
there is a more complicated identity  (see \cite[Theorem 12.9]{MSW})
that expresses $x_{\gamma^{(pq)}}$ in terms of 
$x_{\gamma}$, $x_{\gamma^{(p)}}$, and $x_{\gamma^{(q)}}$.
By analogy, if $\gamma$ is a generalized arc (with self-crossings
allowed), then one could define $x_{\gamma^{(p)}}$ and 
$x_{\gamma^{(pq)}}$ using the above algebraic identities.

\item Combinatorial definition.
In \cite[Theorem 4.16]{MSW} and \cite[Theorem 4.20]{MSW}, we proved
that the cluster algebra elements associated to singly and doubly-notched
arcs $x_{\gamma^{(p)}}$ and $x_{\gamma^{(pq)}}$ have Laurent expansions
which are given as sums over \emph{$\gamma$-symmetric matchings}
and \emph{$\gamma$-compatible pairs of matchings}, respectively.
By analogy, when $\gamma$ is a generalized arc with self-intersections,
one could define 
$x_{\gamma^{(p)}}$ and $x_{\gamma^{(pq)}}$ combinatorially, 
in terms of 
$\gamma$-symmetric matchings
and $\gamma$-compatible pairs of matchings.
The proofs of \cite[Section 12]{MSW} should carry over and show that 
the above algebraic and combinatorial definitions of 
$x_{\gamma^{(p)}}$ and $x_{\gamma^{(pq)}}$ are equivalent.

\item Definition using the separation formula.
The \emph{separation formula} (\cite[Theorem 3.7]{FZ4})
expresses the cluster variables of a cluster algebra
over an arbitrary semifield, with a seed at $t_0$,
using the cluster variables and F-polynomials 
of the corresponding cluster algebra with principal coefficients
at $t_0$.  By using the separation formula -- together with 
the fact that the $B$-matrix of a tagged triangulation 
equals the $B$-matrix of a corresponding ideal triangulation
(obtained by changing the tagging around a collection 
of punctures) -- one obtains a formula for cluster variables
associated to ordinary arcs, in cluster algebras 
$\Aprin(B_T)$, where $T$ is an arbitrary tagged triangulation.
One may then combine this formula with \cite[Proposition 3.15]{MSW},
in order to obtain a formula for cluster variables associated to 
tagged arcs, in cluster algebras
$\Aprin(B_T)$, where $T$ is an arbitrary ideal triangulation.
By analogy, when $\gamma$ is a generalized arc, one could \emph{define} 
$x_{\gamma^{(p)}}$ and $x_{\gamma^{(pq)}}$ by extending the above 
formula from tagged arcs to generalized tagged arcs.
\end{enumerate}

\subsection{Cluster algebra elements associated to closed loops}

A closed loop is not incident to any marked points, thus there is no such thing as a tagged closed loop.  We therefore define $X_\zeta^T = x_\zeta$ when $\zeta$ is a closed curve via good matchings in a band graph, just as before (Definition \ref{def closed loop}), with one exception.  If $\zeta$ is a closed loop without self-intersections enclosing a single puncture $p$, then 
$X_\zeta^T = 1 + \frac{y_\tau}{y_\tau^{(p)}}$ or $1 + \prod_{\tau \in T} y_\tau^{e_p(\tau)}$, depending on whether $T$ contains a self-folded triangle containing $p$ or not.  Here, $e_p(\tau)$ denotes the number of ends of $\tau$ incident to $p$.

\subsection{$\B^{\circ}$ and $\B$ are spanning sets for $\A$}\label{sec:spanning}
In order to prove that both $\B^{\circ}$ and $\B$ span $\Aprin(S,M)$, one must prove skein relations 
involving tagged arcs. Note that two tagged arcs are incompatible
if they cross each other, or if they have an incompatible tagging at a
puncture, as in the left-hand side of 
Figure \ref{fig tagged skein}.
\begin{figure}
\scalebox{1.0}
{
\input{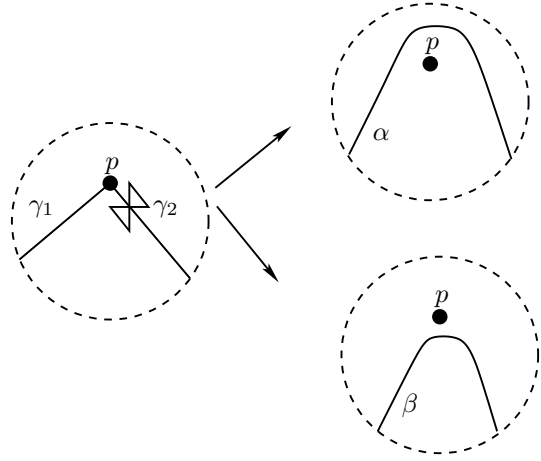}
}
\caption{Resolving an incompatibility at a puncture}\label{fig tagged skein}
\end{figure}

In particular, one must prove skein relations involving:
\begin{enumerate}
\item an ordinary arc and a singly-notched arc, which cross each other
\item an ordinary arc and a doubly-notched arc, which cross each other
\item two singly-notched arcs, which cross each other
\item a singly-notched arc and a doubly-notched arc, which cross each other
\item two doubly-notched arcs, which cross each other
\item an ordinary arc and a singly-notched arc, which have an 
incompatible tagging at a puncture
\item an ordinary arc and a doubly-notched arc, which have one   incompatible tagging at a puncture
\item an ordinary arc and a doubly-notched arc, which have 
two incompatible taggings at a puncture
\item two singly-notched arcs, which have one   incompatible
tagging at a puncture
\item two singly-notched arcs, which have   two incompatible
taggings at a puncture
\item a singly-notched arc and a doubly-notched arc, which have
an incompatible tagging at a puncture
\item a singly-notched arc and a loop
\item a doubly-notched arc and a loop
\item a singly-notched generalized arc with a self-crossing
\item a doubly-notched generalized arc with a self-crossing
\end{enumerate}

In the coefficient-free case, proving skein relations
is straightforward.  One may use the fact that given a puncture
$p$ in $M$, the map $\Psi_p$ which sends an arc $\gamma$ to 
either $\gamma^{(p)}$ or $\gamma$ (depending on whether $\gamma$
has an endpoint at $p$ or not) induces an automorphism 
on the cluster algebra $\A(B_T) = \A(S,M)$. 
 This automorphism maps the cluster
corresponding to the triangulation $T$ to the cluster corresponding to the triangulation $T'$ obtained
from $T$ by changing the tags at the puncture $p$, and it is easy to show that it commutes with the
mutations at these clusters; note that this is a cluster automorphism in the sense of \cite{ASS}.
This reduces all of the tagged skein relations 
involving a crossing, to the untagged skein relations that
we have already proved.  
Additionally, proving the skein relation from 
Figure \ref{fig tagged skein} involving an ordinary
arc and a singly-notched arc with an incompatible tagging at a 
puncture, is straightforward, using the identity 
$x_{\gamma} x_{\gamma^{(p)}} = x_{\ell}$ together with 
an ordinary skein relation (the same proof works with 
principal coefficients, as well). 
Similar proofs should
work for all other skein relations involving an incompatible
tagging at a puncture, at least in the coefficient-free case.
Note that Fock
and Goncharov proved that $\mathcal{B}$ is a basis 
of the upper cluster algebra in the
coefficient-free case, even in the presence of punctures, see
\cite[Section 12.6]{FG1}, by utilizing the monodromy around
punctures.

However, in the presence of principal coefficients, 
the map $\Psi_p$ is not a cluster automorphism on $\Aprin(B_T)$;
it acts nontrivially on the coefficients.  Therefore
it is not possible, as above, to use this map 
to reduce the tagged skein relations involving a crossing,
to the corresponding untagged skein relations.

Additionally, we do not know a good analogue of 
the matrix formulas in \cite{MW} for cluster variables associated to 
arcs with notches.  If one had such matrix formulas, one might hope to
prove the corresponding skein relations via matrix identities,
as in \cite{MW}.

There are several alternative approaches that one might use.  
A first approach is to use the formulas and definitions of 
Section \ref{gen-tagged-arcs} (3) (the separation formula), in order to 
prove the tagged skein relations.  This approach allows us 
to express the cluster algebra elements associated to 
tagged arcs and tagged generalized arcs, in terms of the cluster
variables and F-polynomials associated to untagged
arcs and generalized arcs.  From such formulas, one could obtain
some ``skein relations" immediately.  However, using this approach,
it is not at all clear how to prove the analogue of Lemma \ref{skein-g}.

A second approach is to use the algebraic identities that 
the cluster algebra elements associated to tagged arcs satisfy.
For example, if one wants to prove the skein relation involving
an ordinary arc $x_{\gamma_1}$ and a singly-notched arc $x_{\gamma_2^{(p)}}$
which cross each other, one could use the identity 
$x_{\gamma_2} x_{\gamma_2^{(p)}} = x_{\ell_0}$.   
By considering the skein relation involving $x_{\gamma_1}$ and 
$x_{\ell_0}$, and keeping careful track of the coefficients using 
the lamination corresponding to the initial triangulation $T$,
it is possible to write down the skein relation that expresses
$x_{\gamma_1} x_{\gamma_2^{(p)}}$.  

\begin{example}
\label{example-tagged}

\begin{figure}
\input{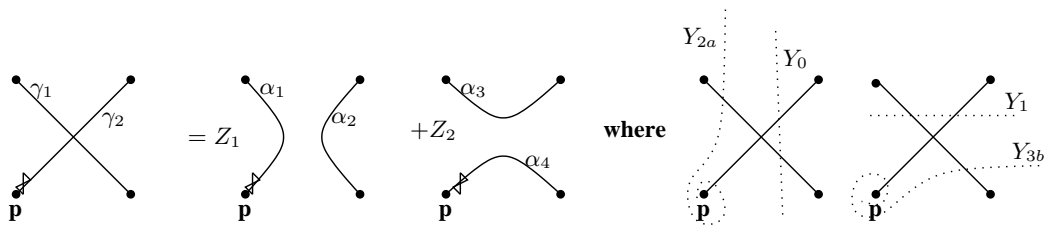}
\caption{Illustrating Example \ref{example-tagged}}
\label{tag1}
\end{figure}

(Case (1) of the skein relations) Let $\alpha_1, \alpha_2, \alpha_3,$ and $\alpha_4$ be the four arcs
obtained by smoothing at the intersection point of $\gamma_1$ and $\gamma_2$, 
as in Figure \ref{tag1}.  Then there are monomials in the coefficient variables, $Z_1$ and $Z_2$, such that
\begin{equation}
\label{tagged-skein}
x_{\gamma_1} x_{\gamma_2^{(p)}} = Z_1 x_{\alpha_1^{(p)}} x_{\alpha_2}+
Z_2 x_{\alpha_3} x_{\alpha_4^{(p)}},
\end{equation}
and precisely one of them equals $1$.
\end{example}

\begin{proof}To show this, we will show that 
$Z_1 = Y_{0} Y_{2a}$ and $Z_2 = Y_1 Y_{3b}$, 
where $Y_0$, $Y_1$, $Y_{2a}$ and $Y_{3b}$ 
are monomials in coefficient variables representing contributions
from the laminations whose local configurations are as shown
by the dotted curves in Figure \ref{tag1}. Note that 
we use the subscript ``a'' (resp. ``b") to indicate a contribution from 
laminations spiraling counterclockwise (resp. clockwise) into the puncture. 

We multiply both sides of \eqref{tagged-skein} by $x_{\gamma_2}$ and verify the resulting equation.
Applying skein relations to $x_{\gamma_2}$ times 
the left-hand-side of \eqref{tagged-skein}, that is, to  
$x_{\gamma_2} x_{\gamma_1} x_{\gamma_2^{(p)}} =x_{\gamma_1} x_{\ell_0}$, we get
\begin{align}
x_{\gamma_1} x_{\ell_0} &=Y_1 x_{\alpha_3} x_{\beta_0} + Y_{2a} Y_{2b} Y_0 x_{\beta_1} x_{\alpha_2}\\ 
\label{eq-lhs}
&= Y_1 Y_{3a} Y_{3b} x_{\alpha_3} x_{\beta_2} +
Y_0 Y_1 Y_4 x_{\alpha_2} x_{\alpha_3} x_{\omega} +
Y_0 Y_{2a} Y_{2b} x_{\alpha_2} x_{\beta_1},
\end{align}
where the (generalized) arcs $\beta_0, \beta_1$ and $\beta_2$
and the closed loop $\omega$
are as in Figure \ref{tag2}.  Also, 
$Y_{2a}, Y_{2b}, Y_{3a}, Y_{3b},$ and $ Y_4$ are monomials in coefficient
variables representing contributions from laminations whose
local configurations are as shown by the dotted curves in Figure \ref{tag2}.

\begin{figure}
\input{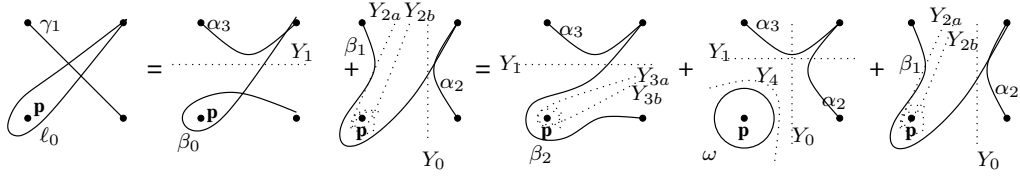}
\caption{Left-hand side of Equation (\ref{tagged-skein})}\label{tag2}
\end{figure}

On the right hand side of 
\eqref{tagged-skein}, after multiplying through by $x_{\gamma_2}$, we obtain 
\begin{align*}
x_{\gamma_2} x_{\alpha_1^{(p)}} x_{\alpha_2} &=
      x_{\ell_1} x_{\beta} x_{\alpha_2} (x_{\alpha_1})^{-1} \\
   &= (Y_{2b} x_{\alpha_1} x_{\alpha_2} x_{\beta_1} + 
    Y_1 Y_{3a} Y_4 Y_{5a} x_{\alpha_1} x_{\alpha_2} x_{\alpha_3})(x_{\alpha_1})^{-1} \\
   &= Y_{2b} x_{\alpha_2} x_{\beta_1} + Y_1 Y_4 Y_{3a} Y_{5a} x_{\alpha_2} x_{\alpha_3},
\end{align*}
see Figure \ref{tag3}. Here $Y_{5a}$ represents the contribution 
from all leaves spiraling counterclockwise into $p$ which are not already
included in $Y_{2a}$ and $Y_{3a}$.
\begin{figure}
\input{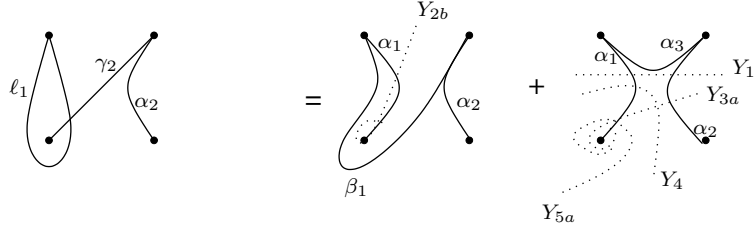}
\caption{First term on right-hand side of Equation (\ref{tagged-skein})}\label{tag3}
\end{figure}

Similarly, 
using the notation of Figure \ref{tag4}, we get
\begin{equation*}
x_{\gamma_2} x_{\alpha_3} x_{\alpha_4^{(p)}} =
   x_{\gamma_2} x_{\alpha_3} x_{\ell_2} (x_{\alpha_4})^{-1}=
 Y_{3a} x_{\alpha_3} x_{\beta_2} + Y_0 Y_4 Y_{2b} Y_{5b} x_{\alpha_3}
  x_{\alpha_2}.
\end{equation*}
\begin{figure}
\input{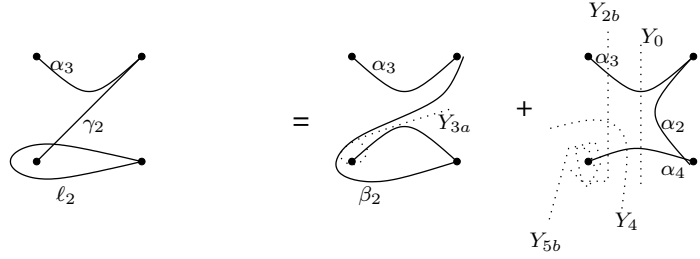}
\caption{Second term on right-hand side of Equation (\ref{tagged-skein})}\label{tag4}
\end{figure}
Therefore $x_{\gamma_2}$ times the right-hand-side of \eqref{tagged-skein}
is equal to 
\begin{equation}
\label{eq-rhs}
Z_1 Y_{2b} x_{\beta_1} x_{\alpha_2}+
Z_2 Y_{3a} x_{\alpha_3} x_{\beta_2} +
(Z_1 Y_1 Y_4 Y_{3a} Y_{5a} + Z_2 Y_0 Y_4 Y_{2b} Y_{5b}) x_{\alpha_2}
x_{\alpha_3}.
\end{equation}
We need to show that (\ref{eq-rhs}) = (\ref{eq-lhs}).

Setting $Z_1 = Y_0 Y_{2a}$ and $Z_2 = Y_1 Y_{3b}$
makes two terms in each of the above expressions coincide,
and we've reduced the proof of \eqref{tagged-skein} to showing that
$Y_0 Y_{2a} Y_1 Y_4 Y_{3a} Y_{5a} + Y_1 Y_{3b} Y_0 Y_4 Y_{2b} Y_{5b}=
Y_0 Y_1 Y_4 x_{\omega},$ or equivalently,
\begin{equation} \label{reduction}
Y_{2a} Y_{3a} Y_{5a} + Y_{3b} Y_{2b} Y_{5b} = x_{\omega}.
\end{equation}

There are two cases, based on whether
$T$ contains a self-folded triangle enclosing the puncture $p$.
If not, then all leaves of the lamination spiral counterclockwise
into $p$, and so $Y_{2b} Y_{3b} Y_{5b} = 1$.
In this case, it follows from the definition that 
$x_{\omega} = 1+Y_{2b} Y_{3b} Y_{5b}$
(since the second monomial represents the product of all coefficient
variables indexed by arcs of $T$ incident to $p$).
This proves \eqref{reduction}.

If $T$ does contain a self-folded triangle enclosing
puncture $p$, then let us denote the radius incident to $p$ by $r$.
In this case there are exactly two leaves of the lamination
spiraling into $p$, $L_r$ and $L_{r^{p}}$, which spiral counterclockwise
and clockwise, respectively.
In this case the left-hand-side of \eqref{reduction}
equals $y_r + y_{r^{(p)}}$.  But this agrees with the 
definition of  $x_{\omega}$.  Either way, we have now shown \eqref{tagged-skein}.

Now, we claim that at least one of $Y_0$ and $Y_1$ is not equal to $1$.
If both were $1$, then any laminations cutting across the quadrilateral
formed by the endpoints of $\gamma_1$ and $\gamma_2$ would have to 
cut across corners of the quadrilateral.  But such a lamination
could not have come from a triangle.
Now note that if $Y_1 \neq 1$ then $Y_0$ and $Y_{2a}$ must equal $1$,
since the leaves of a lamination cannot intersect each other.
Similarly, if $Y_0 \neq 1$, then $Y_1$ and $Y_{3b}$ must equal $1$.\end{proof}

We have shown how to prove the first of fifteen skein relations,
and prove the analogue of Lemma   \ref{skein-g} for this case.  In theory, one may
give a similar argument on a case-by-case basis for the remaining fourteen types of skein relations above. 
 \emph{We believe that this approach would successfully
generalize the results of the present paper to the 
case of general surfaces $(S,M)$, with or without punctures.}

\subsection{$\B^{\circ}$ and $\B$ are linearly independent sets}

If one can extend Lemma \ref{skein-g}
to the case of 
tagged arcs, then it is possible to prove  that 
the sets $\B^{\circ}$ and 
$\B$ are linearly independent.

Indeed, one may extend Proposition \ref{invert}
to define a tagged arc $\overline{\tau}_i$ of $(S,M)$
such that $\gg(x_{\overline{\tau}_i})=-e_i$ for each 
$1 \leq i \leq n$.  One may call this the \emph{anti-arc} construction.

\begin{itemize}
\item If $\tau_i$ is an arc between two marked points,
both on a boundary component, then the definition of 
$\overline{\tau}_i$ is the same as in Proposition \ref{invert}.
\item Suppose that $\tau_i$ is an arc between two marked points $x$ and $p$,
where $x$ lies on a boundary component and $p$ is a puncture.
Let $d_1$  denote the boundary
segment such that  $d_1$ is incident to $x$ and is in the
clockwise direction from $\tau_i$; let $x'$ denote the other endpoint
of $d_1$.  Let  
$\overline{\tau}_i$ 
be the tagged arc of $(S,M)$ between points
$x'$ and $p$, which is tagged plain at $x'$ and notched at $p$,
such that its untagged version is homotopic to the concatenation of
$d_1$ and $\tau_i$.
\item Suppose that $\tau_i$ is an arc between two punctures $p$ and $q$.
Let $\overline{\tau}_i$ 
be the tagged arc of $(S,M)$ which is obtained from
$\tau_i$ by notching both ends.
\end{itemize}
In order to 
prove that $\gg(x_{\overline{\tau}_i})=-e_i$,
one uses the tagged skein relations.

It is then straightforward to extend the arguments of 
Section \ref{sec:g-inverse}, to show that in 
almost all cases, the $\gg$-vector maps 
$\gg: \B^{\circ} \to \Z^n$ and 
$\gg: \B \to \Z^n$ are bijections.
A main tool here is the generalization of 
Lemma \ref{skein-g}.
The only situation in which the 
$\gg$-vector map is not a bijection to $\Z^n$ is the case
that $(S,M)$ is a once-punctured closed surface.  In this case
$\gg$ is an injection but not a surjection. (This is because the 
anti-arc construction for such a surface always gives a doubly-tagged arc, which is 
in the tagged arc complex but not the cluster complex, when 
$(S,M)$ is a once-punctured closed surface.)  However, 
injectivity suffices to show linear independence: by the proof of 
Corollary \ref{cor:independence}, 
it is enough to know that the $\gg$-vectors 
of the basis elements are all distinct.

\subsection{$\B^{\circ}$ and $\B$ are subsets of $\A$}

One can show that the bases $\B^{\circ}$ and $\B$ are 
subsets of $\Aprin$, if $S$ has a non-empty boundary and 
at least two of its marked points are on the boundary, or if 
$S$ has genus zero.  It suffices to show that the cluster algebra
elements corresponding to essential loops lie in $\Aprin$.

The proof of Proposition \ref{lem:containment} (which treats the case
when at least two marked points are on the boundary) goes through 
without changes in the presence of punctures. 

However, when $(S,M)$ has punctures, a new 
argument is required in order to prove Corollary \ref{cor:containment} (which
treats the case that $S$ has genus zero).  
Let $\zeta$ be an essential loop
that cuts out a disk with at least two punctures
$m_1$ and $m_2$
inside it. If $S$ is a sphere, then $\zeta$ cuts out two disks, and
we choose the one with the smaller number of punctures inside it.
One can then prove Corollary \ref{cor:containment} by induction on the 
number of punctures inside $\zeta$.  
The idea is to consider an appropriate skein 
relation involving an unnotched arc between $m_1$ and $m_2$,
and a doubly notched arc between $m_1$ and $m_2$.

{}


\begin{thebibliography}{}

\bibitem[A]{A} C. Amiot, Cluster categories for algebras of global dimension 2 and quivers with potential, 
\emph{Ann. Inst. Fourier} {\bf 59} no 6, (2009), 2525--2590. 

\bibitem[ABCP]{ABCP} I. Assem, T. Br\"ustle, G. Charbonneau-Jodoin and
  P.G. Plamondon, Gentle algebras arising from surface triangulations,
 \emph{Algebra Number Theory} {\bf 4}, (2010), no. 2, 201--229.

\bibitem[ASS]{ASS} I. Assem, R. Schiffler and V. Shramchenko, Cluster automorphisms, preprint, {\tt arXiv:1009.0742}.

\bibitem[BZ]{BZ} T. Br\"ustle and J. Zhang, On the cluster category of a marked surface, preprint, {\tt arXiv:1005.2422}.

\bibitem[BMRRT]{BMRRT} A. Buan, R. Marsh, M. Reineke, I. Reiten and
  G. Todorov, Tilting theory and cluster combinatorics,
  {\em Adv. Math.\/} {\bf 204} (2006), 572-612.

\bibitem[CCS]{CCS} P. Caldero and F. Chapoton and R. Schiffler, Quivers with relations arising from clusters ($A_n$ case), \emph{Trans. Amer. Math. Soc.} {\bf 358} (2006), no. 3, 1347-1364.

\bibitem[CK]{CK}  {P. Caldero and B. Keller}, From triangulated
categories to
  cluster algebras, \emph{Invent. Math.} {\bf 172} (2008), 169--211.

\bibitem[C1]{cerulli} G. Cerulli Irelli, Cluster algebras of type $A_2^{(1)}$, preprint, {\tt arXiv:0904.2543}.

\bibitem[C2]{cerulli2} G. Cerulli Irelli, Positivity in skew-symmetric cluster algebras of finite type, preprint, {\tt arXiv:1102.3050}.

\bibitem[CLF]{CLF} G. Cerulli Irelli and D. Labardini-Fragoso, Quivers with potentials associated to triangulated surfaces, Part III: tagged triangulations and cluster monomials, preprint, {\tt arXiv:1108.1774}.

%
\bibitem[CL]{ConwayLagarias} J. Conway, J. Lagarias, Tiling with polyominoes
and combinatorial group theory, \emph{J. Combin. Theory Ser. A} {\bf 53 }(1990), no. 2, 
183--208.

\bibitem[DWZ]{DWZ} H. Derksen, J. Weyman and A. Zelevinsky, Quivers
  with potentials and their representations II: Applications to cluster algebras, \emph{J. Amer. Math. Soc.\/} {\bf 23} (2010), 749--790. 

\bibitem[DXX]{DXX} M. Ding, J. Xiao, F. Xu, Integral bases of cluster algebras and representations of tame quivers, preprint, {\tt arXiv:0901.1937}.

\bibitem[D1]{Dgeneric} G. Dupont, Generic variables in acyclic cluster algebras, \emph{Journal of Pure and Applied Algebra}, {\bf 215} (4), (2011), 628--641. 
%
\bibitem[D2]{Dup} G. Dupont, Generic variables in acyclic cluster
algebras and bases in affine cluster algebras, preprint, {\tt arXiv:0811.2909v2}.

\bibitem[D3]{D} G. Dupont, Transverse quiver Grassmannians and bases in affine cluster algebras, \emph{Algebra and Number Theory \/}{\bf 4} (5), 599--624, 2010. 


\bibitem[DT]{DuTh} G. Dupont and H. Thomas, Atomic bases in cluster algebras of types A and $\widetilde A$, preprint, {\tt arXiv:1106.3758v1}.

\bibitem[EKLP]{EKLP} N. Elkies, G. Kuperberg, M. Larsen, J. Propp, Alternating-sign
matrices and domino tilings I, \emph{J. Algebraic Combin.} 1 (1992), no. 2, 111-132.

\bibitem[FeShTu]{FeSTu} A. Felikson, M.  Shapiro, P. Tumarkin. Skew-symmetric cluster algebras of finite mutation type, preprint, {\tt arXiv:0811.1703}.

\bibitem[FG1] {FG1} V. Fock and A. Goncharov, Moduli spaces of local
  systems and higher Teichm\"uller theory.  {\em Publ. Math. Inst. Hautes
  \'Etudes Sci.}  No. 103  (2006), 1--211. 

\bibitem[FG2]{FG2} V. Fock and A. Goncharov, Cluster ensembles,
  quantization and the dilogarithm,  \emph{Ann. Sci. Ec. Norm. Super.\/} (4) 42, (2009), no. 6, 865--930.

\bibitem[FG3]{FG3} V. Fock and A. Goncharov, Dual Teichm\"uller and
lamination spaces.  Handbook of Teich\-m\"uller theory. Vol. I,  647--684,
IRMA Lect. Math. Theor. Phys., 11, Eur. Math. Soc., Z\"urich,
2007.

\bibitem[FST]{FST} S. Fomin, M. Shapiro, and D. Thurston, Cluster algebras and triangulated surfaces. Part I: Cluster complexes, \emph{Acta Math.} {\bf 201} (2008), 83-146. 

\bibitem[FST2]{FST2} D. Thurston, Lectures at the International Conference
on Cluster Algebras and Related Topics, Mexico, December 8-20, 2008, based on joint work 
with Sergey Fomin and Michael Shapiro.

\bibitem[FT]{FT} S. Fomin and D. Thurston, Cluster algebras and triangulated surfaces. Part II: Lambda Lengths, preprint (2008), 
{\tt http://www.math.lsa.umich.edu/$\sim$fomin/Papers/cats2.ps}

\bibitem[FZ1]{FZ1} S. Fomin and A. Zelevinsky, Cluster algebras
I: Foundations, \emph{J. Amer. Math. Soc.} \bf 15 \rm (2002),
497--529.
 
\bibitem[FZ4]{FZ4} S. Fomin and A. Zelevinsky, Cluster algebras IV: Coefficients, \emph{Compositio Mathematica} \bf 143 \rm (2007), 112-164.

\bibitem[FG00]{FroGel} C. Frohman and R. Gelca, Skein modules and the noncommutative torus, \emph{Trans. AMS} \bf 352, \rm no. 10,  (2000), 4877-4888.
   
 \bibitem[GLS1]{GLS} C. Geiss, B. Leclerc and J. Schr\"oer, Kac-Moody groups and cluster algebras, 
\emph{Adv. Math.\/}, {\bf 228} (2011) no. 1, 329--433. 

 \bibitem[GLS2]{GLS2} C. Geiss, B. Leclerc and J. Schr\"oer, Generic bases for cluster algebras and the Chamber Ansatz,
\emph{J. Amer. Math. Soc.} {\bf 25} (2012), 21--76.

 \bibitem[GLS3]{GLS3} C. Geiss, B. Leclerc and J. Schr\"oer, Cluster structures on quantum coordinate rings, preprint. {\tt arXiv:1104.0531}.

\bibitem[GSV1]{GSV} M. Gekhtman, M. Shapiro and A. Vainshtein,
Cluster algebras and Weil-Petersson forms, Duke Math. J.
{\bf 127} (2005), 291--311.

\bibitem[GSV2]{GSV-book} M. Gekhtman, M. Shapiro and A. Vainshtein,
Cluster algebras and Poisson geometry, Mathematical Surveys and Monographs, {\bf 167}.  \emph{American Mathematical Society}, Providence, RI 2010.
 
\bibitem[G]{G} A. Goncharov, personal communication, October 2011.


\bibitem[HL]{HL} D. Hernandez and B. Leclerc,
Cluster algebras and quantum affine algebras, 
\emph{Duke Math. J.} {\bf 154} (2010), no. 2, 265--341. 
\bibitem[HL2]{HL2} D. Hernandez and B. Leclerc,
Quantum Grothendieck rings and derived Hall algebras, preprint. {\tt arXiv:1109.0862}.

\bibitem[LF]{LF} D. Labardini-Fragoso, Quivers with potentials associated to triangulated surfaces, 
  \emph{Proc. Lond. Math. Soc.} (3) {\bf 98} (2009), no. 3, 797--839. 

\bibitem[La1]{La1}  P. Lampe, A quantum cluster algebra of Kronecker type and the dual canonical basis, \emph{Itern. Math. Res. Not.\/} {\bf 13}, (2011) 2970--3005.
\bibitem[La2]{La2}  P. Lampe, Quantum cluster algebras of type $A$ and the dual canonical basis, preprint. {\tt arXiv:1101.0580}.

\bibitem[L1]{Lusztig1}
Lusztig, Canonical bases arising from quantized enveloping
algebras. \emph{J. Amer. Math. Soc} {\bf 3} (1990), 447-498.

\bibitem[L2]{Lusztig2} Lusztig, \emph{Introduction to quantum groups}, Progress in Mathematics 110,
Birkhauser, 1993.

\bibitem[L3]{Lusztig3} Lusztig, \emph{Total positivity in reductive groups, in:
Lie theory and geometry: in honor of Bertram Kostant}, Progress in Mathematics 123, Birkhauser, 1994.

\bibitem[MS]{MS} G. Musiker and R. Schiffler, Cluster expansion formulas and
perfect matchings, \emph{J. Algebraic Combin.} {\bf 32} (2010), no. 2, 187-209.

\bibitem[MSW]{MSW} G. Musiker, R. Schiffler and L. Williams, Positivity for cluster algebras from surfaces,  
\emph{Adv. Math.}  {\bf 227}, (2011), 2241--2308.

\bibitem[MW]{MW} G. Musiker and L. Williams, Matrix formulae and skein relations for cluster algebras from surfaces, preprint, {\tt arXiv:1108.3382}.

\bibitem[N]{Nak} H. Nakajima, Quiver varieties and cluster algebras, 
\emph{Kyoto J. Math.} {\bf 51}, Number 1 (2011), 71--126. 

\bibitem[Pen1]{Pen1} R. Penner, The decorated Teichm\"uller space of 
punctured surfaces, Comm. Math. Phys., 113 (1987), 299-339.

\bibitem[Pen2]{Pen2} R. Penner, Lambda Lengths, preprint,
{\tt \tiny http://www.ctqm.au.dk/research/MCS/lambdalengths.pdf}. 

 
 \bibitem[Pl]{plamondon} P. G. Plamondon, Cluster algebras via cluster categories with infinite-dimensional morphism spaces, preprint, 
 {\tt arXiv:1004.0830}.
 
 \bibitem[Pl2]{Pl2} P. G. Plamondon, Cat\'egories amass\'ees aux espaces de morphismes de dimension infinie, applications, Ph.D. Thesis, University Paris Diderot, June 2011.
 {\tt http://people.math.jussieu.fr/\~~plamondon/plamondon\underline{\ }these.pdf}

 
\bibitem[Pr]{ProppLattice} J. Propp, Lattice structure for orientations of graphs, preprint (1993), {\tt arXiv:math/0209.5005}.

\bibitem[S1]{S1} R. Schiffler, A geometric model for cluster categories of type $D_n$, \emph{J. Alg. Comb.\/} {\bf 27}, no. 1, (2008) 1--21.

\bibitem[S2]{S2} R. Schiffler, A cluster expansion formula ($A_n$
  case), Electron. J. Combin. 15 (2008), \#R64 1.

\bibitem[S3]{S3} R. Schiffler, On cluster algebras arising from
unpunctured surfaces II, 
\emph{Adv. Math.} {\bf 223} (2010), 1885--1923.  

\bibitem[ST]{ST} R. Schiffler, H. Thomas : On cluster algebras arising from unpunctured surfaces, \emph{Int. Math. Res. Not.\/} no. 17, 3160--3189.(2009).

\bibitem[SZ]{SZ} P. Sherman and A. Zelevinsky, Positivity and
  canonical bases in rank 2 cluster algebras of finite and affine
  types.  Mosc. Math. J.  4  (2004),  no. 4, 947--974, 982.

\bibitem[T1]{dylan} D. Thurston, Geometric intersection of curves
in surfaces, preprint, {\tt http://www.math.columbia.edu/\~~dpt/DehnCoordinates.ps}

\bibitem[Th1]{Shear} W. Thurston, On the geometry and dynamics of diffeomorphisms of surfaces, 
\emph{Bull. Amer. Math. Soc.} (N.S.) {\bf 19} (1988), no. 2, 417--431. 

\bibitem[Th2]{Thurston} W. Thurston, Conway's tiling groups,
\emph{Amer. Math. Monthly} {\bf 97} (1990), 757--773.

\bibitem[Z]{Z} A. Zelevinsky, Semicanonical basis generators of the
  cluster algebra of type $A\sp {(1)}\sb 1$.
  \emph{Electron. J. Combin.\/} {\bf 14}
  (2007),  no. 1, Note 4, 5 pp. (electronic). 

\end{thebibliography}
\end{document}